\theoremstyle{thmstyleone}%
\newtheorem{thm}{Theorem}[section]
\newtheorem{cor}[thm]{Corollary}
\newtheorem{prop}[thm]{Proposition}
\theoremstyle{remark}
\newtheorem{rem}[thm]{Remark}
\newcommand{\bo}[1]{{\bf #1}}
\newcommand{\Per}{\operatorname{Per}}
\newcommand{\di}{\operatorname{div}}
\newcommand{\Id}{\operatorname{\bo{Id}}}
\newcommand{\diam}{\operatorname{diam}}
\newcommand{\txtb}{\textcolor{blue}} 
\newcommand{\txtr}{\textcolor{red}} 
\begin{document}

\title[Optimization of the Steklov-Lam\'e eigenvalues with respect to the domain]{Optimization of the Steklov-Lam\'e eigenvalues with respect to the domain}

\author[1]{\fnm{Pedro R.S.} \sur{Antunes}}\email{prantunes@fc.ul.pt}

\author*[2]{\fnm{Beniamin} \sur{Bogosel}}\email{beniamin.bogosel@polytechnique.edu}

\affil[1]{\orgdiv{Departamento de Matem\'{a}tica}, \orgname{Instituto Superior T\'{e}cnico, Universidade de Lisboa}, \orgaddress{\street{Av. Rovisco Pais 1}, \city{Lisboa}, \postcode{P-1049-001}} and \orgdiv{Grupo de F\'{i}sica Matem\'{a}tica}, \orgname{Faculdade de Ci\^{e}ncias, Universidade de Lisboa}, \orgaddress{\street{Campo Grande, Edif\'{i}cio C6}, \city{Lisboa}, \postcode{P-1749-016}, \country{Portugal}}}

\affil[2]{\orgdiv{Centre de Math\'ematiques Appliqu\'ees}, \orgname{Ecole Polytechnique}, \orgaddress{\street{Rue de Saclay}, \city{Palaiseau}, \postcode{91128}, \country{France}}}

\abstract{
	This work deals with theoretical and numerical aspects related to the behavior of the Steklov-Lam\'e eigenvalues on variable domains. After establishing the eigenstructure for the disk, we prove that for a certain class of Lam\'e parameters, the disk maximizes the first non-zero eigenvalue under area or perimeter constraints in dimension two. Upper bounds for these eigenvalues can be found in terms of the scalar Steklov eigenvalues, involving various geometric quantities. We prove that the Steklov-Lam\'e eigenvalues are upper semicontinuous for the complementary Hausdorff convergence of $\varepsilon$-cone domains and, as a consequence, there exist shapes maximizing these eigenvalues under convexity and volume constraints. A numerical method based on fundamental solutions is proposed for computing the Steklov-Lam\'e eigenvalues, allowing to study numerically the shapes maximizing the first ten non-zero eigenvalues. 
}

\keywords{shape optimization, Steklov-Lam\'e eigenvalues, fundamental solutions}

\pacs[MSC Classification]{49Q10, 35P15, 65N35}

\maketitle

\section{Introduction}
Given an open, bounded, connected Lipschitz domain consider the Steklov eigenvalue problem
\begin{equation}
\left\{\begin{array}{rcll}
-\Delta u & = & 0 & \text{ in }\Omega \\
\nabla u \cdot \bo n & = & \sigma_n(\Omega) u & \text{ on } \partial \Omega,
\end{array}\right.
\label{eq:steklov-eigs}
\end{equation} 
where $\bo n$ is the outer unit normal vector to $\partial \Omega$. 
It is known that the Steklov spectrum consists of a sequence of eigenvalues of the form 
\[ 0=\sigma_0(\Omega) < \sigma_1(\Omega) \leq ... \to +\infty.\]
The study of optimization problems related to Steklov eigenvalues was initiated by the works of Weinstock \cite{weinstock} and Hersch, Payne and Schiffer \cite{hersch-payne-schiffer}. Recently, there have been many works related to the study of these eigenvalues, as indicated in the survey paper \cite{survey-girouard-polterowich}. The sloshing behavior of a liquid in a cup has been related to problem in  \eqref{eq:steklov-eigs} in \cite{sloshing}. The Steklov-Neumann problem, consisting of adding some boundary parts with Neumann boundary condition in \eqref{eq:steklov-eigs}, has been studied in \cite{ammari-nigam}. It is shown that the corresponding equation models the behavior of a liquid in a container with immovable parts on its surface. 

Weinstock proved in \cite{weinstock} that $\sigma_1(\Omega)$ is maximized by the disk among simply connected two dimensional sets with fixed perimeter. Numerical observations made in \cite{Bogosel2} show that adding a small hole and rescaling to have prescribed perimeter may increase the Steklov eigenvalue. Therefore, simple connectedness is essential for Weinstock's result. Brock proved in \cite{brock} that $\sigma_1(\Omega)$ is maximized by the ball under volume constraint in any dimension. In \cite{hersch-payne-schiffer}  Hersch, Payne and Schiffer provided various upper bounds for functionals depending on the Steklov spectrum, equality being attained for the disk in many of them. One particularity of all these results is that direct proofs are given that the disk is optimal.

More recently the question of existence of solutions for problems depending on the Steklov eigenvalues was investigated. One key ingredient is understanding the semi-continuity properties for the Steklov eigenvalues when the domain changes. In \cite{Bogosel} existence of maximizers was proved for convex shapes and for shapes verifying an $\varepsilon$-cone property. This result was generalized in \cite{bogosel-bucur-giacomini} to general domains under volume constraint using a relaxed formulation. Numerical methods were developed in \cite{Bogosel2}, \cite{osting-steklov} for studying shapes maximizing $\sigma_k(\Omega)$ given some $k \geq 1$. 

Recently in \cite{Sebastian} the Steklov-Lam\'e problem was investigated, which is the analogue of problem \eqref{eq:steklov-eigs} in the setting of linearized elasticity. The precise definition of the Steklov-Lam\'e eigenvalues and the resulting properties are recalled in the next section. The objective of this work is to investigate theoretically and numerically the maximizers of the Steklov-Lam\'e eigenvalues. Although the questions we ask are natural, by analogy to the scalar Steklov problem, the techniques are more involved, reflecting the difficulties raised by the vectorial context.

In this work, we will also address the numerical	shape optimization of Steklov-Lam\'{e} eigenvalues using the Method of Fundamental Solutions (MFS) as forward solver. The MFS approximation is based on shifts of the fundamental solution of the PDE to some points placed at the exterior of the domain. Thus, by construction, the MFS approximation satisfies the PDE of the problem and the approximation is usually justified by density results. The MFS is a mesh and integration free method and typically presents very fast convergence when applied to smooth shapes. For details about the MFS we refer to the following works \cite{Alves,Alves-Antunes_2013,Bogomolny,FK}.

{\bf Structure of the paper.} In Section \ref{sec:properties} we compute the Steklov-Lam\'e eigenstructure of the disk for all ranges of admissible Lam\'e parameters and we establish an analogue of the Weinstock inequality \cite{weinstock} for a certain range of parameters. In Section \ref{sec:existence} we investigate the behavior of the Steklov-Lam\'e eigenvalues on moving domains. In particular, we show that there exist maximizers for the Steklov-Lam\'e eigenvalues in the class of convex shapes with fixed volume. In Section \ref{sec:moler-payne} we prove a result inspired by Moler and Payne \cite{moler-payne} related to changes in the solution of a PDE related to the Steklov-Lam\'e problem when the boundary conditions are verified in an approximate way. This result justifies the use of the MFS to approximate the Steklov-Lam\'e eigenvalues, presented in Section \ref{sec:num-methods}. Numerical results related to the optimization of the eigenvalues are shown in Section \ref{sec:num-results}.

\section{The Steklov-Lam\'e eigenvalues}
\label{sec:properties}
\subsection{Definitions and main properties} In the following, we use regular lower case fonts for scalar functions and bold lower case fonts for vectorial functions. Most of the results presented in this paper are valid in arbitrary dimensions. The eigenvalues of the disk and the numerical simulations are related to dimension $d=2$. For simplicity, denote $\bo H^1(\Omega) = (H^1(\Omega))^d$ and $\bo H_0^1(\Omega) = (H_0^1(\Omega))^d$. We use the same type of notations for $L^2$ spaces: bold case refers to vectorial elements of the proper dimension. The scalar product of two vectors $\bo x, \bo y$ is denoted by $\bo x\cdot   \bo y$. The matrix scalar product of two matrices $\bo S=(s_{ij})_{1\leq i,j \leq n}$ and $\bo T=(t_{ij})_{1\leq i,j\leq n}$ is denoted by $\bo S:\bo T = \sum_{i,j=1}^n s_{ij}t_{ij}$.

Consider a Lipschitz domain $\Omega\subset\Bbb{R}^d$. Note that more general domains for which the Steklov-Lam\'e eigenvalues are defined could be considered, as underlined in \cite{Sebastian}. Consider the solution $\bo u \in \bo H^1(\Omega)$ of the problem
\begin{equation}
\left\{ \begin{array}{rcll}
-\di A(e(\bo u)) & = & 0 & \text{ in } \Omega \\
Ae(\bo u)\bo n & = & \Lambda(\Omega) \bo u & \text{ on } \partial \Omega,
\end{array}\right.
\label{eq:steklov-lame}
\end{equation}
where $e(\bo u) = \frac{1}{2}( \nabla \bo u+ \nabla \bo u^T)$ is the usual symmetrized gradient and the material properties are given by Hooke's law $A\xi = 2\mu \xi +\lambda \text{tr}(\xi) \Id$. The parameters $\mu>0$ and $\lambda$ are called the Lam\'e coefficients and they are assumed to satisfy the condition $\lambda+\frac{2}{d}\mu>0$. The Jacobian of $\bo u$ is denoted by $\nabla \bo u$ and $\Id$ denotes the identity matrix. The spectral problem \eqref{eq:steklov-lame} was studied in \cite{Sebastian} where it is proved that under the hypotheses stated above, the spectrum of this problem consists of an increasing sequence of non-negative eigenvalues. It is straightforward to observe that the problem \eqref{eq:steklov-lame} is equivalent to the variational formulation
\begin{equation}
\int_\Omega Ae(\bo u): e(\bo v) = \Lambda(\Omega) \int_{\partial \Omega} \bo u \cdot \bo v \ \ \ \text{ for every } \bo v \in \bo H^1(\Omega).
\label{eq:var-form}
\end{equation} 

The space of rigid motions $\bo R(\Omega)$ is defined (as in \cite{Sebastian}) as the set of functions $\bo v \in \bo H^1(\Omega)$ such that $e(\bo v)=0$. It is a classical result that for a connected open domain $\Omega$ we have
\begin{equation}
\bo R(\Omega) = \{\bo v \in \bo H^1(\Omega) :  \bo v(x) = a+Bx, a\in \Bbb{R}^d, B \in \Bbb{R}^{d\times d}, B^T=-B\}.
\label{eq:zeri-eigenfunctions}
\end{equation}
One can observe that $\dim \bo R(\Omega) = \frac{d(d+1)}{2}$. All elements in $\bo R(\Omega)$ verify $e(\bo u)=0$. Therefore all rigid motions are eigenfunctions for \eqref{eq:steklov-lame} associated to a zero eigenvalue. Conversely, any eigenfunction $\bo u$ associated to the zero eigenvalue verifies $e(\bo u) = 0$ in $\Omega$. 

In view of the previous considerations, and the results in \cite{Sebastian}, the Steklov-Lam\'e spectrum of a connected Lipschitz domain $\Omega$ is given by
\[ 0 = \Lambda_{0,1}(\Omega) = ... = \Lambda_{0,\frac{d(d+1)}{2}}(\Omega) < \Lambda_1(\Omega) \leq \Lambda_2(\Omega)\leq  ... \to +\infty.\]
In view of the variational formulation \eqref{eq:var-form}, it is classical that the eigenvalues can be characterized using Rayleigh quotients
\begin{equation}
\Lambda_n(\Omega) = \min_{\bo S_{n}\subset \bo H^1(\Omega)}
\max_{\bo u \in \bo S_n\setminus\bo H_0^1(\Omega)} \frac{\int_\Omega Ae(\bo u):e(\bo u)}{\int_{\partial \Omega} |\bo u|^2}
\label{eq:rayleigh}
\end{equation}
where the minimum is taken over all subspaces $\bo S_{n}$ of $\bo H^1(\Omega)$ having dimension $n+\frac{d(d+1)}{2}$. Denote for each $n\geq 1$ by $\bo u_n\in \bo H^1(\Omega)$ an eigenfunction associated to the eigenvalue $\Lambda_n(\Omega)$. It is immediate to observe that if $\bo u_i$ and $\bo u_j$ are associated to the different eigenvalues $\Lambda_i(\Omega) \neq \Lambda_j(\Omega)$ then \eqref{eq:var-form} implies that
\[ \Lambda_i(\Omega) \int_{\partial \Omega} \bo u_i \cdot \bo u_j = \int_\Omega Ae(\bo u_i): e(\bo u_j) =  \int_\Omega Ae(\bo u_j): e(\bo u_i)=\Lambda_j(\Omega) \int_{\partial \Omega} \bo u_i \cdot \bo u_j.\]
As a direct consequence $\int_{\partial \Omega} \bo u_i \cdot \bo u_j = 0$. It is natural to assume that the eigenfunctions $\bo u_n, n \geq 1$ form an orthonormal family when restricted to $\bo L^2(\partial \Omega)$. We make this assumption in the rest of the article. Another direct consequence of \eqref{eq:var-form} is 
\[ \int_{\partial \Omega} \bo u_n \cdot \bo r= 0,\]
for every $n\geq 1$ and $\bo r \in \bo R(\Omega)$, i.e. eigenfunctions associated to $\Lambda_n(\Omega)$ with $n \geq 1$ are orthogonal in $\bo L^2(\partial \Omega)$ to all rigid motions.

\begin{rem} 
	It is possible to express the eigenvalues of \eqref{eq:steklov-lame} using Rayleigh quotients for subspaces of dimension $n$ in $\bo H^1(\Omega)$ which are orthogonal to $\bo R(\Omega)$ in $\bo L^2(\partial \Omega)$. However, the formulation \eqref{eq:rayleigh} is more practical for the theoretical questions that will be answered later in the paper.
\end{rem}



In the following, in order to underline the dependence of the eigenvalue on the shape $\Omega$ and on the parameters $\lambda,\mu$, denote by $\Lambda_n(\Omega,\lambda,\mu)$ an eigenvalue of \eqref{eq:steklov-lame} for a certain pair of Lam\'{e} parameters. Then we have the following result concerning the scaling of the eigenvalues with respect to the parameters.
\begin{prop}
	{\rm (i)}  Scaling with respect to homotheties:
	\begin{equation}\label{eq:scaling-homotheties}
	\Lambda_n(t\Omega,\lambda,\mu) = \frac{1}{t} \Lambda_n(\Omega,\lambda,\mu) \text{ for any } t>0.
	\end{equation}
	
	{\rm (ii)} Scaling of the Lam\'e parameters:
	\begin{equation}
	\label{multpar}
	\Lambda_n(\Omega,\alpha\lambda,\alpha\mu)=\alpha\Lambda_n(\Omega,\lambda,\mu),\ \forall\alpha>0
	\end{equation}
	\label{prop:scaling}
\end{prop}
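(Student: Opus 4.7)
The plan is to deduce both scaling laws from the Rayleigh quotient characterization \eqref{eq:rayleigh}, since the eigenvalues admit a min-max description and both operations (rescaling the domain, rescaling the Lamé parameters) act transparently on test functions, on the bilinear forms, and on the space of rigid motions.

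For part (ii), I would begin by noting that the Hooke tensor $A$ depends linearly on $(\lambda,\mu)$. Writing $A^{\lambda,\mu}\xi = 2\mu\xi + \lambda\mathrm{tr}(\xi)\Id$, one sees that $A^{\alpha\lambda,\alpha\mu} = \alpha A^{\lambda,\mu}$, hence the numerator of the Rayleigh quotient in \eqref{eq:rayleigh} is multiplied by $\alpha$ while the boundary $L^2$-denominator is untouched. Moreover, the space of rigid motions \eqref{eq:zeri-eigenfunctions} depends only on $\Omega$, not on $(\lambda,\mu)$, so the dimension constraint on the admissible subspaces $\bo S_n$ is unaffected. Taking min-max over the same collection of subspaces therefore yields \eqref{multpar}.

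For part (i), the substitution $\bo v(y) := \bo u(y/t)$ defines a linear bijection $\bo H^1(\Omega)\to\bo H^1(t\Omega)$ which sends the rigid-motion subspace $\bo R(\Omega)$ onto $\bo R(t\Omega)$ and preserves dimensions of subspaces. A change of variables $y = tx$ gives $e(\bo v)(y) = \tfrac{1}{t}e(\bo u)(x)$, so
\[
\int_{t\Omega} Ae(\bo v)\!:\!e(\bo v)\,dy = t^{d-2}\int_\Omega Ae(\bo u)\!:\!e(\bo u)\,dx,\qquad \int_{\partial(t\Omega)}|\bo v|^2\,d\sigma = t^{d-1}\int_{\partial\Omega}|\bo u|^2\,d\sigma.
\]
Hence the Rayleigh quotient on $t\Omega$ evaluated at $\bo v$ equals $t^{-1}$ times the Rayleigh quotient on $\Omega$ evaluated at $\bo u$. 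Applying the min-max and using the bijection to transport admissible subspaces gives \eqref{eq:scaling-homotheties}.

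There is no genuine obstacle here: the main things to verify carefully are that the dilation bijection is compatible with the admissibility condition $\dim \bo S_n = n + \tfrac{d(d+1)}{2}$ and that rigid motions are mapped to rigid motions (clear from the explicit form $\bo r(x) = a + Bx$). Once these bookkeeping points are handled, both scaling identities are direct consequences of the homogeneity of $A$ and of the change-of-variables formula.
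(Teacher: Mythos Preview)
Your proposal is correct and follows essentially the same approach as the paper: the paper's proof is a single line stating that (i) follows from a change of variables and (ii) from the linearity of \eqref{eq:steklov-lame} in the Lam\'e parameters, and you have simply spelled out these computations via the Rayleigh quotient \eqref{eq:rayleigh}. The bookkeeping points you flag (rigid motions map to rigid motions, subspace dimensions are preserved) are indeed routine and implicit in the paper's terse justification.
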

\begin{proof}(i) is a direct consequence by a change of variables. (ii) is a consequence of the linearity of \eqref{eq:steklov-lame}.
\end{proof} 

In this work we will consider the shape optimization problems
\begin{equation}\label{shoptprob}
\Lambda_n^*(\Omega,\lambda,\mu):=\sup \Big\{\Lambda_n(\Omega,\lambda,\mu), \Omega\subset\mathbb{R}^d:|\Omega|=1\Big\}.
\end{equation}
and
\begin{equation}
\label{shoptprobconv}
\Lambda_n^{\#}(\Omega,\lambda,\mu):=\sup\left\{\Lambda_n(\Omega,\lambda,\mu), \Omega\subset\mathbb{R}^d,\ \Omega\ \text{convex},\ |\Omega|=1\right\}.
\end{equation}

Later on, we will show that problem \eqref{shoptprobconv} has a solution, implying that the supremum could be replaced by the maximum. Numerical simulations will be performed to approximate solutions to problems \eqref{shoptprob} and \eqref{shoptprobconv}, indicating that optimal shapes are likely to exist also for problem \eqref{shoptprob}. This is in accord with theoretical and numerical observations for the maximization of the scalar Steklov eigenvalues \cite{osting-steklov}, \cite{bogosel-bucur-giacomini}, however, the general theory of existence is not completely established not even in the scalar case, when only a volume constraint is present.

\subsection{The disk} In this section we focus on the case of the disk in dimension $d=2$ and we derive the closed form of the eigenvalues and eigenfunctions. This will be useful for having a benchmark for the numerical approximation method and also will allow to answer partially some questions regarding the maximality of the disk for the first non-zero eigenvalue. We introduce polar coordinates
\[\bo u(r,\theta)=u_r(r,\theta)\bo e_r+u_\theta(r,\theta)\bo e_\theta,\] 
where
\[\bo e_r=\cos(\theta)\bo e_1+\sin(\theta)\bo e_2\ \text{and}\ \bo e_\theta=-\sin(\theta)\bo e_1+\cos(\theta)\bo e_2.\]

We consider $\bo u$ defined by a Fourier expansion
\begin{equation}
\label{solu}
\bo u(r,\theta)=\begin{bmatrix}c_0^r(r)\\ c_0^\theta(r)\end{bmatrix}+\sum_{n=1}^\infty\begin{bmatrix}c_n^r(r)\\ c_n^\theta(r)\end{bmatrix}\cos(n\theta)+\sum_{n=1}^\infty\begin{bmatrix}s_n^r(r)\\ s_n^\theta(r)\end{bmatrix}\sin(n\theta)
\end{equation}
and search for solutions of the partial differential equation $\di A(e(\bo u))  =  0$, which implies that we have (cf. ~\cite{VMFG})
\begin{equation}
\begin{array}{c}
c_0^r(r)=A_0 r \\
c_0^\theta(r)=B_0r,
\end{array}
\label{eq:n0}
\end{equation}

\begin{equation}
\begin{array}{c}
c_1^r(r)=-A_1^0+A_1 \left(\frac{-\lambda+\mu}{\lambda+\mu}\right) r^2\\
c_1^\theta(r)=B_1^0+B_1\left(\frac{3\lambda+5\mu}{\lambda+\mu}\right)r^2\\
s_1^r(r)=B_1^0-B_1  \left(\frac{-\lambda+\mu}{\lambda+\mu}\right) r^2\\
s_1^\theta(r)=A_1^0+A_1\left(\frac{3\lambda+5\mu}{\lambda+\mu}\right)r^2\\
\end{array}
\label{eq:n1}
\end{equation}
and
\begin{equation}
\begin{array}{c}
c_n^r(r)=-A_n^0 r^{n-1}+A_n \left(\frac{-n\lambda-(n-2)\mu}{n(\lambda+\mu)}\right) r^{n+1}\\
c_n^\theta(r)=B_n^0r^{n-1}+B_n\left(\frac{(n+2)\lambda+(n+4)\mu}{n(\lambda+\mu)}\right)r^{n+1}\\
s_n^r(r)=B_n^0r^{n-1}-B_n \left(\frac{-n\lambda-(n-2)\mu}{n(\lambda+\mu)}\right)  r^{n+1}\\
s_n^\theta(r)=A_n^0r^{n-1}+A_n\left(\frac{(n+2)\lambda+(n+4)\mu}{n(\lambda+\mu)}\right)r^{n+1}\\
\end{array},\ n=2,3,...
\label{eq:ngen}
\end{equation}
for some constants $A_i, B_i,\ i=0,1,...$ and $A_i^0,B_i^0,\ i=1,2,...$ Moreover, as shown in~\cite{VMFG}, for a solution of type \eqref{solu} in the disk we have
\begin{align*}Ae(\bo u)\bo n(r)&=\begin{bmatrix}(\lambda+2\mu)c_0^{r}\ '(r)+\frac{\lambda}{r}c_0^r(r)\\ \mu\left(c_0^\theta\ '(r)-\frac{1}{r}c_0^\theta(r)\right)\end{bmatrix}\\
&+\sum_{n=1}^\infty\begin{bmatrix}(\lambda+2\mu)c_n^{r}\ '(r)+\frac{\lambda}{r}c_n^r(r)+\frac{n\lambda}{r}s_n^\theta(r)\\ \mu\left(\frac{n}{r}s_n^r(r)+c_n^\theta\ '(r)-\frac{1}{r}c_n^\theta(r)\right)\end{bmatrix}\cos(n\theta)\\
&+\sum_{n=1}^\infty\begin{bmatrix}(\lambda+2\mu)s_n^{r}\ '(r)+\frac{\lambda}{r}s_n^r(r)-\frac{n\lambda}{r}c_n^\theta(r)\\ \mu\left(-\frac{n}{r}c_n^r(r)+s_n^\theta\ '(r)-\frac{1}{r}s_n^\theta(r)\right)\end{bmatrix}\sin(n\theta).
\end{align*}

\begin{thm}
	\label{thm:eigdisk}
	The Steklov-Lam\'{e} spectrum of a disk of radius equal to $R$ is the sorted list of the following real numbers:
	\begin{enumerate}[label=\upshape{(\roman*)}]
		\item $0$ (with multiplicity 3), 
		\item $\frac{2(\lambda+\mu)}{R},$
		\item $\frac{4\mu(\lambda+\mu)}{(\lambda+3\mu)R}$ (counted twice) and
		\item $\frac{2\mu(n-1)}{R}$ (counted twice), for $n=2,3,...$ and 
		\item $\frac{2(n+1)\mu(\lambda+\mu)}{(\lambda+3\mu)R}$ (counted twice), for $n=2,3,...$
	\end{enumerate}
	The eigenfunctions in each of the previous cases are linear combinations of the following sets of functions
	\begin{enumerate}[label=\upshape{(\roman*)}]
		\item $\left\{(1,0),\ (0,1),\ r(-\sin(\theta),\cos(\theta))\right\}$
		\item $\left\{r(\cos(\theta),\sin(\theta))\right\}$
		\item $
		\Big\{\Big(2(R^2-r^2)+\frac{(\lambda+3\mu)r^2\cos(2\theta)}{\lambda+\mu},\frac{(\lambda+3\mu)r^2\sin(2\theta)}{\lambda+\mu}\Big)$, 
		$\Big(\frac{(\lambda+3\mu)r^2\sin(2\theta)}{\lambda+\mu},2(R^2-r^2)-\frac{(\lambda+3\mu)r^2\cos(2\theta)}{\lambda+\mu}\Big)\Big\}$
		\item $\left\{r^{n-1}\left(\cos((n-1)\theta),-\sin((n-1)\theta)\right),r^{n-1}\left(\sin((n-1)\theta),\cos((n-1)\theta)\right)\right\}$
		\item $\left\{(f_1(r,\theta),f_2(r,\theta)),(f_3(r,\theta),f_4(r,\theta))\right\},$
		where
			\end{enumerate}
		$ f_1(r,\theta)=\frac{r^{n-1}}{(\lambda+\mu)n}\left(-(\lambda+\mu)(n+1)(r^2-R^2)\cos((n-1)\theta)+(\lambda+3\mu)r^2\cos((n+1)\theta)\right),$\newline
		$ f_2(r,\theta)=\frac{r^{n-1}}{(\lambda+\mu)n}\left((\lambda+\mu)(n+1)(r^2-R^2)\sin((n-1)\theta)+(\lambda+3\mu)r^2\sin((n+1)\theta)\right),$\newline
		$ f_3(r,\theta)=\frac{r^{n-1}}{(\lambda+\mu)n}\left((\lambda+\mu)(n+1)(r^2-R^2)\sin((n-1)\theta)-(\lambda+3\mu)r^2\sin((n+1)\theta)\right),$\newline
		$ f_4(r,\theta)=\frac{r^{n-1}}{(\lambda+\mu)n}\left((\lambda+\mu)(n+1)(r^2-R^2)\cos((n-1)\theta)+(\lambda+3\mu)r^2\cos((n+1)\theta)\right).$

\end{thm}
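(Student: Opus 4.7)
The plan is to exploit the rotational invariance of the disk together with the Fourier representation \eqref{solu}, whose radial coefficients already encode the constraint $\di A(e(\bo u))=0$ via \eqref{eq:n0}--\eqref{eq:ngen}. I would substitute these into the explicit boundary expression for $Ae(\bo u)\bo n$ recalled just before the statement, impose $Ae(\bo u)\bo n=\Lambda \bo u$ at $r=R$, and project onto the orthogonal Fourier basis $\{\cos(n\theta)\bo e_r,\cos(n\theta)\bo e_\theta,\sin(n\theta)\bo e_r,\sin(n\theta)\bo e_\theta\}_{n\ge 0}$. This reduces the spectral problem to a small linear system mode by mode, and the full spectrum is assembled by collecting all resulting eigenvalues across modes.

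In mode $n=0$ only $A_0$ and $B_0$ survive. The two boundary equations decouple: the radial one reads $2(\lambda+\mu)A_0=\Lambda R A_0$, producing the breathing mode of item (ii), while the tangential one collapses to $\Lambda R B_0 = 0$, producing the planar rotation $r\bo e_\theta$ at $\Lambda=0$ which accounts for one of the three rigid motions of item (i).

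For $n\ge 1$ the four unknowns $(A_n^0,B_n^0,A_n,B_n)$ obey a $4\times 4$ homogeneous system. The key observation, read off directly from \eqref{eq:n1}--\eqref{eq:ngen}, is that $c_n^r$ and $s_n^\theta$ depend only on $(A_n^0,A_n)$ while $c_n^\theta$ and $s_n^r$ depend only on $(B_n^0,B_n)$; since the stress equations for $\cos(n\theta)\bo e_r$ and $\sin(n\theta)\bo e_\theta$ mix only $c_n^r$ with $s_n^\theta$, and the remaining two equations mix only $c_n^\theta$ with $s_n^r$, the system splits into two identical $2\times 2$ blocks, which accounts for the "counted twice" multiplicity. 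Within each block the coefficients in front of $A_n^0$ (resp.\ $B_n^0$) in the two rows turn out to be exact opposites, so adding the two rows makes the matrix upper triangular. Using the identity $\alpha+\beta = \frac{2(\lambda+3\mu)}{n(\lambda+\mu)}$ (with $\alpha,\beta$ the coefficients in \eqref{eq:ngen}) together with a routine simplification of the $\cos(n\theta)\bo e_r$ row, the determinant factors as
\[
[\Lambda R - 2\mu(n-1)]\cdot[(\lambda+3\mu)\Lambda R - 2(n+1)\mu(\lambda+\mu)] = 0,
\]
immediately yielding $\Lambda = \frac{2\mu(n-1)}{R}$ (item (iv), which for $n=1$ reduces to $\Lambda=0$, the two rigid translations) and $\Lambda = \frac{2(n+1)\mu(\lambda+\mu)}{(\lambda+3\mu)R}$ (item (v), which for $n=1$ reproduces item (iii)).

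Each eigenvalue then determines the corresponding $2\times 2$ eigenvector $(A_n^0,A_n)$, respectively $(B_n^0,B_n)$, up to scaling, and the explicit eigenfunctions in (i)--(v) are recovered by back-substitution into \eqref{solu} followed by conversion of $\bo e_r,\bo e_\theta$ to Cartesian components; the angle addition formulas produce the $\cos((n\pm 1)\theta)$ and $\sin((n\pm 1)\theta)$ structure visible in $f_1,\ldots,f_4$. Completeness of the list is automatic, since any $\bo H^1(\Omega)$ eigenfunction admits the Fourier expansion \eqref{solu}, the PDE pins its radial profiles to \eqref{eq:n0}--\eqref{eq:ngen}, and the mode-by-mode analysis exhausts all non-trivial solutions. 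The main obstacle is purely algebraic, namely simplifying the $A_n$-coefficient of the $\cos(n\theta)\bo e_r$ row (which collapses to $-2(n-2)(n+1)\mu/n$) so that both diagonal entries of the upper triangular matrix factor into the compact closed forms above; this is the only step that requires careful bookkeeping, the rest being forced by the symmetry of the disk.
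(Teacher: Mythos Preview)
Your proposal is correct and follows essentially the same approach as the paper: impose the boundary condition on the general Fourier-separated solution \eqref{solu} with radial profiles \eqref{eq:n0}--\eqref{eq:ngen}, and solve mode by mode. The paper treats $n=0$, $n=1$, $n\ge 2$ separately, writes each $n\ge 1$ case as a $4\times 4$ generalized eigenvalue problem $\bo N_n X=\Lambda\bo P_n X$, inverts $\bo P_n$ explicitly, and reads off the eigenvalues of $\bo M_n=\bo P_n^{-1}\bo N_n$.

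Your route differs in one useful detail: you notice at the outset that the $4\times 4$ system decouples into two equivalent $2\times 2$ blocks (the $(A_n^0,A_n)$ block from the $\cos(n\theta)\bo e_r$ and $\sin(n\theta)\bo e_\theta$ equations, and the $(B_n^0,B_n)$ block from the other pair), which immediately explains the ``counted twice'' multiplicity and halves the algebra. Your row-addition trick then replaces the paper's matrix inversion by a direct factorization of a $2\times 2$ determinant. This is a genuine streamlining---the paper verifies $\det\bo P_n\neq 0$ and inverts, whereas you bypass that entirely---but the underlying method (separation of variables, finite-dimensional reduction per Fourier mode) is the same. Both arguments handle completeness in the same implicit way, relying on the Fourier expansion and the cited form of regular interior solutions.
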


\begin{proof} The eigenvalues can be determined by imposing 
\begin{equation}
\label{eigeq}
Ae(\bo u)\bo n  =  \Lambda \bo u
\end{equation} 
at the boundary of the disk which can be assumed to be centered at the origin and so, on the boundary we have $r=R.$ We separate the study in the cases $n=0,$ $n=1$ and $n\geq2.$

\underline{\bf Case $n=0$}: 

The boundary condition is given by 
\[\begin{bmatrix}(\lambda+2\mu)c_0^{r}\ '(R)+\frac{\lambda}{R}c_0^r(R)\\ \mu\left(c_0^\theta\ '(R)-\frac{1}{R}c_0^\theta(R)\right)\end{bmatrix}=\Lambda\begin{bmatrix}c_0^r(R)\\ c_0^\theta(R)\end{bmatrix}\]
and taking into account \eqref{eq:n0} we obtain
\begin{align*}\begin{bmatrix}(\lambda+2\mu)A_0+\lambda A_0\\ \mu\left(B_0-B_0\right)\end{bmatrix}=\Lambda\begin{bmatrix}A_0 R\\ B_0R\end{bmatrix}&\Longleftrightarrow\begin{bmatrix}(2\lambda+2\mu)A_0\\ 0\end{bmatrix}=\Lambda\begin{bmatrix}A_0 R\\ B_0R\end{bmatrix}\\
&\Longleftrightarrow\underbrace{\begin{bmatrix}\frac{2(\lambda+\mu)}{R}&0\\0&0\end{bmatrix}}_{:=\bo M_0}\begin{bmatrix}A_0\\B_0\end{bmatrix}=\Lambda\begin{bmatrix}A_0\\B_0\end{bmatrix}.
\end{align*}
The Steklov-Lam\'{e} eigenvalues in this case are the eigenvalues of matrix $\bo M_0$, which are $0$ and $\frac{2(\lambda+\mu)}{R}.$ The corresponding eigenfunctions can be obtained from the eigenvectors of matrix $\bo v_1=(1,0)$ (associated to the eigenvalue $\frac{2(\lambda+\mu)}{R}$) and $\bo v_2=(0,1)$ (associated to the eigenvalue $0$).
In the case $\bo v_1=(1,0),$ from \eqref{eq:n0} we obtain $c_0^r(r)=r;\ c_0^\theta(r)=0,$ which implies that
\[\bo u(r,\theta)=r\bo e_r=r(\cos(\theta),\sin(\theta)).\]
In the case $\bo v_2=(0,1),$ again from \eqref{eq:n0} we obtain $c_0^r(r)=0;\ c_0^\theta(r)=r,$ which implies that
\[\bo u(r,\theta)=r\bo e_\theta=r(-\sin(\theta),\cos(\theta)).\]

\underline{\bf Case $n=1$}:

The boundary condition is given by
\begin{align*}&\begin{bmatrix}(\lambda+2\mu)c_1^{r}\ '(R)+\frac{\lambda}{R}c_1^r(R)+\frac{\lambda}{R}s_1^\theta(R)\\ \mu\left(\frac{1}{R}s_1^r(R)+c_1^\theta\ '(R)-\frac{1}{R}c_1^\theta(R)\right)\end{bmatrix}\cos(\theta)\\
+&\begin{bmatrix}(\lambda+2\mu)s_1^{r}\ '(R)+\frac{\lambda}{R}s_1^r(R)-\frac{\lambda}{R}c_1^\theta(R)\\ \mu\left(-\frac{1}{R}c_1^r(R)+s_1^\theta\ '(R)-\frac{1}{R}s_1^\theta(R)\right)\end{bmatrix}\sin(\theta)\\
=&
\Lambda\left(\begin{bmatrix}c_1^r(R)\\ c_1^\theta(R)\end{bmatrix}\cos(\theta)+\begin{bmatrix}s_1^r(R)\\ s_1^\theta(R)\end{bmatrix}\sin(\theta)\right)
\end{align*}
and since the previous equality shall hold for all values of $\theta$ we conclude that we must have
\[\begin{bmatrix}(\lambda+2\mu)c_1^{r}\ '(R)+\frac{\lambda}{R}c_1^r(R)+\frac{\lambda}{R}s_1^\theta(R)\\ \mu\left(\frac{1}{R}s_1^r(R)+c_1^\theta\ '(R)-\frac{1}{R}c_1^\theta(R)\right)\\
(\lambda+2\mu)s_1^{r}\ '(R)+\frac{\lambda}{R}s_1^r(R)-\frac{\lambda}{R}c_1^\theta(R)\\ \mu\left(-\frac{1}{R}c_1^r(R)+s_1^\theta\ '(R)-\frac{1}{R}s_1^\theta(R)\right)\end{bmatrix}=\Lambda\begin{bmatrix}c_1^r(R)\\ c_1^\theta(R)\\
s_1^r(R)\\ s_1^\theta(R)\end{bmatrix}.\]
Taking into account \eqref{eq:n1}, 
\[\scriptsize\hspace{-1cm}\begin{bmatrix}(\lambda+2\mu)A_1\left(\frac{-\lambda+\mu}{\lambda+\mu}\right)2R-\frac{\lambda}{R}A_1^0+\lambda A_1\left(\frac{-\lambda+\mu}{\lambda+\mu}\right)R+\frac{\lambda}{R}A_1^0+\lambda A_1\left(\frac{3\lambda+5\mu}{\lambda+\mu}\right)R\\ \mu\left(\frac{1}{R}B_1^0-B_1\left(\frac{-\lambda+\mu}{\lambda+\mu}\right)R+2B_1\left(\frac{3\lambda+5\mu}{\lambda+\mu}\right)R-\frac{1}{R}B_1^0-B_1\left(\frac{3\lambda+5\mu}{\lambda+\mu}\right)R\right)\\
-(\lambda+2\mu)B_1\left(\frac{-\lambda+\mu}{\lambda+\mu}\right)2R+\frac{\lambda}{R}B_1^0-\lambda B_1\left(\frac{-\lambda+\mu}{\lambda+\mu}\right)R-\frac{\lambda}{R}B_1^0-\lambda B_1\left(\frac{3\lambda+5\mu}{\lambda+\mu}\right)R\\ \mu\left(\frac{1}{R}A_1^0-A_1\left(\frac{-\lambda+\mu}{\lambda+\mu}\right)R+A_1\left(\frac{3\lambda+5\mu}{\lambda+\mu}\right)2R-\frac{1}{R}A_1^0-A_1\left(\frac{3\lambda+5\mu}{\lambda+\mu}\right)R\right)\end{bmatrix}=\]
\[\scriptsize=\Lambda\begin{bmatrix}-A_1^0+A_1 \left(\frac{-\lambda+\mu}{\lambda+\mu}\right) R^2\\ B_1^0+B_1\left(\frac{3\lambda+5\mu}{\lambda+\mu}\right)R^2\\
B_1^0-B_1  \left(\frac{-\lambda+\mu}{\lambda+\mu}\right) R^2\\ A_1^0+A_1\left(\frac{3\lambda+5\mu}{\lambda+\mu}\right)R^2\end{bmatrix}\Longleftrightarrow
\begin{bmatrix}4\mu A_1R\\ 4\mu B_1R\\
-4\mu B_1R\\ 4\mu A_1R\end{bmatrix}=\Lambda\begin{bmatrix}-A_1^0+A_1 \left(\frac{-\lambda+\mu}{\lambda+\mu}\right) R^2\\ B_1^0+B_1\left(\frac{3\lambda+5\mu}{\lambda+\mu}\right)R^2\\
B_1^0-B_1  \left(\frac{-\lambda+\mu}{\lambda+\mu}\right) R^2\\ A_1^0+A_1\left(\frac{3\lambda+5\mu}{\lambda+\mu}\right)R^2\end{bmatrix}\]	
which can be written as
\begin{equation}\scriptsize
\label{primigual}
\bo N_1 \begin{bmatrix}A_1^0\\ B_1^0\\ A_1\\B_1\end{bmatrix}=\Lambda \bo P_1 \begin{bmatrix}A_1^0\\ B_1^0\\ A_1\\B_1\end{bmatrix},
\end{equation}
where
\[\scriptsize\bo N_1=\begin{bmatrix}0 &0&4\mu R&0\\
0 & 0&0&4\mu R\\
0 & 0&0&-4\mu R\\
0 &0&4\mu R&0
\end{bmatrix}\quad\text{and}\quad\bo P_1=\begin{bmatrix}-1&0&\left(\frac{-\lambda+\mu}{\lambda+\mu}\right)R^2&0\\
0&1&0&\left(\frac{3\lambda+5\mu}{\lambda+\mu}\right)R^2\\
0 &1&0&\left(\frac{\lambda-\mu}{\lambda+\mu}\right)R^2\\
1&0&\left(\frac{3\lambda+5\mu}{\lambda+\mu}\right)R^2&0\end{bmatrix}.\]	
We have $\displaystyle{\det(\bo P_1)=-\frac{4(\lambda+3\mu)^2R^4}{(\lambda+\mu)^2}}<0$ which justifies the invertibility of the matrix $\bo P_1$ and we conclude that \eqref{primigual} is equivalent to
\begin{equation}
\label{primigual2}
\underbrace{\bo P_1^{-1}\cdot\bo N_1}_{:=\bo M_1} \begin{bmatrix}A_1^0\\ B_1^0\\ A_1\\B_1\end{bmatrix}=\Lambda  \begin{bmatrix}A_1^0\\ B_1^0\\ A_1\\B_1\end{bmatrix},
\end{equation}
and the Steklov-Lam\'{e} eigenvalues are the eigenvalues of matrix $\bo M_1$, which are $0$ (double eigenvalue) and $\frac{4\mu(\lambda+\mu)}{(\lambda+3\mu)R}$ (double eigenvalue). The eigenfunctions can be calculated from the eigenvectors, $\bo v_1=(-2R^2,0,1,0)$ and $\bo v_2=(0,-2R^2,0,1)$ (associated to the eigenvalue $\frac{4\mu(\lambda+\mu)}{(\lambda+3\mu)R}$) and $\bo v_3=(-1,0,0,0)$ and $\bo v_4=(0,1,0,0)$ (associated to the eigenvalue $0$). For instance, for $\bo v_1$ we get\newline
$ c_1^r(r)=2R^2+\left(\frac{-\lambda+\mu}{\lambda+\mu}\right)r^2;\ c_1^\theta(r)=0;\ s_1^r(r)=0;\ s_1^\theta(r)=-2R^2+\left(\frac{3\lambda+5\mu}{\lambda+\mu}\right)r^2$
and
$u_r(r,\theta)=\left(2R^2+\left(\frac{-\lambda+\mu}{\lambda+\mu}\right)r^2\right)\cos(\theta)$,
$u_\theta(r,\theta)=\left(-2R^2+\left(\frac{3\lambda+5\mu}{\lambda+\mu}\right)r^2\right)\sin(\theta)$
which implies that
\begin{align*}
\bo u(r,\theta)=&u_r(r,\theta)\bo e_r+u_\theta(r,\theta)\bo e_\theta\\ =&\left(2(R^2-r^2)+\frac{(\lambda+3\mu)r^2\cos(2\theta)}{\lambda+\mu},\frac{(\lambda+3\mu)r^2\sin(2\theta)}{\lambda+\mu}\right).\end{align*}
The eigenfunction associated to $\bo v_2$ is computed in a similar way and is given by
\begin{align*}\bo u(r,\theta)=&u_r(r,\theta)\bo e_r+u_\theta(r,\theta)\bo e_\theta\\=&\left(\frac{(\lambda+3\mu)r^2\sin(2\theta)}{\lambda+\mu},2(R^2-r^2)-\frac{(\lambda+3\mu)r^2\cos(2\theta)}{\lambda+\mu}\right).\end{align*}
The computation of the eigenfunction associated to $\bo v_3$ is similar, obtaining $c_1^r(r)=1,\ c_1^\theta(r)=0,\ s_1^r(r)=0,\ s_1^\theta(r)=-1 \Longrightarrow u_r(r,\theta)=\cos(\theta);\ u_\theta(r,\theta)=-\sin(\theta)$
which implies that
\[\bo u(r,\theta)=\left(\cos^2(\theta)+\sin^2(\theta),\cos(\theta)\sin(\theta)-\sin(\theta)\cos(\theta)\right)=(1,0).\]
Using the eigenvector $\bo v_4$ we get $\bo u(r,\theta)=(0,1)$

\underline{\bf Case $n\geq2$}:

The computations in this case are similar to those of the case $n=1$. We have
\begin{align*}&
\begin{bmatrix}(\lambda+2\mu)c_n^{r}\ '(R)+\frac{\lambda}{R}c_n^r(R)+\frac{\lambda}{R}ns_n^\theta(R)\\ \mu\left(\frac{n}{R}s_n^r(R)+c_n^\theta\ '(R)-\frac{1}{R}c_n^\theta(R)\right)\end{bmatrix}\cos(n\theta)\\
+&\begin{bmatrix}(\lambda+2\mu)s_n^{r}\ '(R)+\frac{\lambda}{R}s_n^r(R)-\frac{\lambda}{R}nc_n^\theta(R)\\ \mu\left(-\frac{n}{R}c_n^r(R)+s_n^\theta\ '(R)-\frac{1}{R}s_n^\theta(R)\right)\end{bmatrix}\sin(n\theta)\\
=&\Lambda\left(\begin{bmatrix}c_n^r(R)\\ c_n^\theta(R)\end{bmatrix}\cos(n\theta)+\begin{bmatrix}s_n^r(R)\\ s_n^\theta(R)\end{bmatrix}\sin(n\theta)\right)
\end{align*}
which implies that
\begin{equation}
\label{eqn}
\begin{bmatrix}(\lambda+2\mu)c_n^{r}\ '(R)+\frac{\lambda}{R}c_n^r(R)+\frac{\lambda}{R}ns_n^\theta(R)\\ \mu\left(\frac{n}{R}s_n^r(R)+c_n^\theta\ '(R)-\frac{1}{R}c_n^\theta(R)\right)\\
(\lambda+2\mu)s_n^{r}\ '(R)+\frac{\lambda}{R}s_n^r(R)-\frac{\lambda}{R}nc_n^\theta(R)\\ \mu\left(-\frac{n}{R}c_n^r(R)+s_n^\theta\ '(R)-\frac{1}{R}s_n^\theta(R)\right)\end{bmatrix}=\Lambda\begin{bmatrix}c_n^r(R)\\ c_n^\theta(R)\\
s_n^r(R)\\ s_n^\theta(R)\end{bmatrix}.
\end{equation}

Using \eqref{eq:ngen} we see that \eqref{eqn} can be written as
\begin{equation}
\label{primigualn}
\bo N_n \begin{bmatrix}A_n^0\\ B_n^0\\ A_n\\B_n\end{bmatrix}=\Lambda \bo P_n \begin{bmatrix}A_n^0\\ B_n^0\\ A_n\\B_n\end{bmatrix},
\end{equation}
where
\[\bo N_n=\begin{bmatrix}-2\mu(n-1)R^{n-2} &0&-2\mu\frac{(n-2)(n+1)}{n} R^n&0\\
0 & 2\mu(n-1)R^{n-2}&0&2\mu(n+1)R^n\\
0 & 2\mu(n-1)R^{n-2}&0&2\mu\frac{(n-2)(n+1)}{n}R^n\\
2\mu(n-1)R^{n-2} &0&2\mu(n+1)R^n&0
\end{bmatrix}\]
and
\[\bo P_n=\begin{bmatrix}-R^{n-1}&0&-\frac{\mu(n-2)+\lambda n}{n(\lambda+\mu)}R^{n+1}&0\\
0&R^{n-1}&0&\frac{\lambda(n+2)+\mu(n+4)}{n(\lambda+\mu)}R^{n+1}\\
0&R^{n-1}&0&\frac{\mu(n-2)+\lambda n}{n(\lambda+\mu)}R^{n+1}\\
R^{n-1}&0&\frac{\lambda(n+2)+\mu(n+4)}{n(\lambda+\mu)}R^{n+1}&0\end{bmatrix}.\]	
The matrix $\bo P_n$ is invertible because $\displaystyle{\det(\bo P_n)=-\frac{4(\lambda+3\mu)^2R^{4n}}{(\lambda+\mu)^2n^2}}<0$ 
and \eqref{primigualn} is equivalent to
\begin{equation}
\label{primigualfinal}
\underbrace{\bo P_n^{-1}\cdot\bo N_n}_{:=\bo M_n} \begin{bmatrix}A_n^0\\ B_n^0\\ A_n\\B_n\end{bmatrix}=\Lambda  \begin{bmatrix}A_n^0\\ B_n^0\\ A_n\\B_n\end{bmatrix},
\end{equation}
and the Steklov-Lam\'{e} eigenvalues are the eigenvalues of matrix $\bo M_n$, which are $\frac{2\mu(n-1)}{R}$ (double eigenvalue) and $\frac{2(n+1)\mu(\lambda+\mu)}{(\lambda+3\mu)R}$ (double eigenvalue).

The eigenfunctions can be calculated from the eigenvectors, $\bo v_1=(-\frac{(n+1)R^2}{n},0,1,0)$ and $\bo v_2=(0,-\frac{(n+1)R^2}{n},0,1)$ (associated to the eigenvalue $\frac{2(n+1)\mu(\lambda+\mu)}{(\lambda+3\mu)R}$) and $\bo v_3=(-1,0,0,0)$ and $\bo v_4=(0,1,0,0)$ (associated to the eigenvalue $\frac{2\mu(n-1)}{R}$.)
Using the eigenvector $\bo v_3$ we get
\[c_n^r(r)=r^{n-1},\ c_n^\theta(r)=0,\ s_n^r(r)=0,\ s_n^\theta(r)=-r^{n-1}\]
and
\[u_r(r,\theta)=r^{n-1}\cos(n\theta),\ u_\theta(r,\theta)=-r^{n-1}\sin(n\theta).\]
Therefore, we obtain
\begin{align*}\bo u(r,\theta)=
r^{n-1}\left(\cos((n-1)\theta),-\sin((n-1)\theta)\right)
\end{align*}
Following the same steps using the eigenvector $\bo v_4$ we obtain
\[\bo u(r,\theta)=r^{n-1}\left(\sin((n-1)\theta),\cos((n-1)\theta)\right).\]

Finally, from the eigenvector $\bo v_1$ we get, for $n=2,3,...$
\[	\begin{array}{c}
c_n^r(r)=\frac{(n+1)R^2}{n} r^{n-1}+ \left(\frac{-n\lambda-(n-2)\mu}{n(\lambda+\mu)}\right) r^{n+1};\quad c_n^\theta(r)=0\\
s_n^r(r)=0;\quad	s_n^\theta(r)=-\frac{(n+1)R^2}{n}r^{n-1}+\left(\frac{(n+2)\lambda+(n+4)\mu}{n(\lambda+\mu)}\right)r^{n+1}\\
\end{array},
\label{eq:ngendm}\]
which implies that
\[u_r(r,\theta)=\left(\frac{(n+1)R^2}{n} r^{n-1}+ \left(\frac{-n\lambda-(n-2)\mu}{n(\lambda+\mu)}\right) r^{n+1}\right)\cos(n\theta)\]
and
\[u_\theta(r,\theta)=\left(-\frac{(n+1)R^2}{n}r^{n-1}+\left(\frac{(n+2)\lambda+(n+4)\mu}{n(\lambda+\mu)}\right)r^{n+1}\right)\sin(n\theta).\]
Therefore,{\small 
\[\textstyle \bo u(r,\theta)_1=\frac{r^{n-1}}{(\lambda+\mu)n}\left(-(\lambda+\mu)(n+1)(r^2-R^2)\cos((n-1)\theta)+(\lambda+3\mu)r^2\cos((n+1)\theta)\right)\]
}
and in a similar fashion, we get
{\small \[\textstyle \bo u(r,\theta)_2=\frac{r^{n-1}}{(\lambda+\mu)n}\left((\lambda+\mu)(n+1)(r^2-R^2)\sin((n-1)\theta)+(\lambda+3\mu)r^2\sin((n+1)\theta)\right)\]}
which concludes the proof.
\end{proof}

Denote by $c_2(\lambda,\mu)=\frac{2(\lambda+\mu)}{R}$, $c_3(\lambda,\mu)=\frac{4\mu(\lambda+\mu)}{(\lambda+3\mu)R}$ and $c_4(\lambda,\mu)=\frac{2\mu}{R}$, which are the smallest eigenvalues obtained, respectively in cases (ii), (iii) and (iv) in Theorem~\ref{thm:eigdisk}. Then the following result helps establish what is the smallest non-zero eigenvalue of the disk.
\begin{prop}\label{prop:help-order}
	We have
	\begin{itemize}
		\item $c_2(\lambda,\mu)\leq c_4(\lambda,\mu)\leq c_3(\lambda,\mu)$, in the region $\left\{(\mu,\lambda)\in\mathbb{R}^2:0<\mu,\lambda<-3\mu\right\}$
		\item $c_4(\lambda,\mu)\leq c_3(\lambda,\mu)\leq c_2(\lambda,\mu)$, in $\left\{(\mu,\lambda)\in\mathbb{R}^2:0<\mu,\lambda\geq\mu\right\}$
		\item $c_3(\lambda,\mu)\leq c_2(\lambda,\mu)\leq c_4(\lambda,\mu)$, in $\left\{(\mu,\lambda)\in\mathbb{R}^2:0<\mu,-3\mu<\lambda\leq0\right\}$
		\item $c_3(\lambda,\mu)\leq c_4(\lambda,\mu)\leq c_2(\lambda,\mu)$, in $\left\{(\mu,\lambda)\in\mathbb{R}^2:0<\mu,0<\lambda\leq\mu\right\}.$
	\end{itemize}
\end{prop}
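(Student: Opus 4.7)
The proposition is a direct algebraic comparison between three explicit quantities, so the plan is simply to compute pairwise differences of $c_2$, $c_3$, $c_4$, factor them, and then read off the sign in each region.

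\textbf{Step 1: Pairwise differences.} I will compute
\[
c_2-c_4 = \frac{2(\lambda+\mu)-2\mu}{R} = \frac{2\lambda}{R},
\]
\[
c_2-c_3 = \frac{2(\lambda+\mu)(\lambda+3\mu)-4\mu(\lambda+\mu)}{(\lambda+3\mu)R} = \frac{2(\lambda+\mu)^2}{(\lambda+3\mu)R},
\]
\[
c_3-c_4 = \frac{4\mu(\lambda+\mu)-2\mu(\lambda+3\mu)}{(\lambda+3\mu)R} = \frac{2\mu(\lambda-\mu)}{(\lambda+3\mu)R}.
\]
Since $\mu>0$, $R>0$ and $(\lambda+\mu)^2\geq 0$, the signs reduce to:
\[
\operatorname{sign}(c_2-c_4)=\operatorname{sign}(\lambda),\quad
\operatorname{sign}(c_2-c_3)=\operatorname{sign}(\lambda+3\mu),\quad
\operatorname{sign}(c_3-c_4)=\operatorname{sign}\!\bigl((\lambda-\mu)(\lambda+3\mu)\bigr).
\]

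\textbf{Step 2: Case analysis.} For each of the four regions I will just check the three signs above.
In the region $\lambda<-3\mu$ one has $\lambda<0$, $\lambda+3\mu<0$ and $\lambda-\mu<0$, so $(\lambda-\mu)(\lambda+3\mu)>0$; hence $c_2<c_4$, $c_2<c_3$ and $c_4<c_3$, giving $c_2\leq c_4\leq c_3$. In the region $\lambda\geq\mu$ all three numerators $\lambda$, $\lambda+3\mu$, $\lambda-\mu$ are nonnegative (and $\lambda+3\mu>0$), so $c_4\leq c_2$, $c_3\leq c_2$ and $c_4\leq c_3$, giving $c_4\leq c_3\leq c_2$. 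For $-3\mu<\lambda\leq 0$, one has $\lambda\leq 0$, $\lambda+3\mu>0$, $\lambda-\mu<0$, so $(\lambda-\mu)(\lambda+3\mu)<0$; hence $c_2\leq c_4$, $c_3\leq c_2$ and $c_3<c_4$, giving $c_3\leq c_2\leq c_4$. Finally for $0<\lambda\leq\mu$, the signs are $\lambda>0$, $\lambda+3\mu>0$, $\lambda-\mu\leq 0$, so $c_4\leq c_2$, $c_3\leq c_2$ and $c_3\leq c_4$, giving $c_3\leq c_4\leq c_2$.

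\textbf{Obstacle.} There is really no substantive obstacle: the only thing to be careful about is the sign of the common denominator $\lambda+3\mu$, which is negative in the first region and positive in the other three, and must be tracked when clearing denominators to determine the sign of $c_3-c_4$. Once the three factored differences are in hand, the proposition follows by inspection.
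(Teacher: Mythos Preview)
Your proof is correct. The paper states this proposition without proof, evidently regarding the pairwise comparison of the three explicit quantities as routine; your direct computation of the factored differences $c_2-c_4=\tfrac{2\lambda}{R}$, $c_2-c_3=\tfrac{2(\lambda+\mu)^2}{(\lambda+3\mu)R}$, $c_3-c_4=\tfrac{2\mu(\lambda-\mu)}{(\lambda+3\mu)R}$ and the ensuing sign analysis is exactly the intended elementary verification.
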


Since in dimension two $\lambda+\mu>0$ implying that $\lambda>-\mu>-3\mu$, the first situation listed in Proposition \ref{prop:help-order} cannot hold. Therefore, we have the following characterization for the first non-zero Steklov-Lam\'e eigenvalue for the disk.

\begin{prop}
	The smallest strictly positive eigenvalue for the disk $D_R$ of radius $R$ is given by
	\begin{itemize}
		\item $\Lambda_1(D_R) = \frac{2\mu}{R}$ when $\lambda> \mu$. In this case the associated eigenspace has dimension $2$, generated by 
		\[ \bo u_1 = (x_1,-x_2), \bo u_2 = (x_2,x_1),\]
		verifying $Ae(\bo u_i):e(\bo u_i) \equiv 4\mu$ on $\Bbb{R}^2$ and $|\bo u_i|^2 = r^2$ on $\Bbb{R}^2$.
		\item $\Lambda_1(D_R)=\frac{4\mu(\lambda+\mu)}{(\lambda+3\mu)R}$ when $\lambda\leq  \mu$. The associated eigenspace has dimension two and is generated by
		\[ \bo u_1 = \left(2(R^2-x_1^2-x_2^2)+\frac{\lambda+3\mu}{\lambda+\mu}(x_1^2-x_2^2), \frac{\lambda+3\mu}{\lambda+\mu}2x_1x_2\right).\]
		\[ \bo u_2 = \left( \frac{\lambda+3\mu}{\lambda+\mu}2x_1x_2,2(R^2-x_1^2-x_2^2)-\frac{\lambda+3\mu}{\lambda+\mu}(x_1^2-x_2^2)\right).\]
		Furthermore, we have
		\[ |\bo u_1|^2+|\bo u_2|^2 = 8(R^2-r^2)^2+2\left(\frac{\lambda+3\mu}{\lambda+\mu}\right)^2 r^4\]
		and
		\[ Ae(\bo u_1):e(\bo u_1)+Ae(\bo u_2):e(\bo u_2) = 32\mu \frac{\lambda+3\mu}{\lambda+\mu} r^2.\]
	\end{itemize}
	\label{prop:first-eig}
\end{prop}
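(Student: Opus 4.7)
The plan is to combine Theorem~\ref{thm:eigdisk} with Proposition~\ref{prop:help-order} to identify the first positive eigenvalue in closed form, and then read off the corresponding eigenfunctions directly from the eigenstructure already derived. As a preliminary step I would note that, under the admissibility condition $\lambda+\mu>0$, the two infinite families of eigenvalues in Theorem~\ref{thm:eigdisk}(iv)--(v) are both strictly increasing in $n$ for $n\geq 2$. Their smallest elements are reached at $n=2$: for (iv) this is exactly $c_4(\lambda,\mu)=2\mu/R$, and for (v) it is $\frac{6\mu(\lambda+\mu)}{(\lambda+3\mu)R}=\tfrac{3}{2}\,c_3(\lambda,\mu)>c_3$. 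Consequently $\Lambda_1(D_R)$ must coincide with the smallest element of $\{c_2,c_3,c_4\}$.

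Next I would apply Proposition~\ref{prop:help-order}. In dimension two the assumption $\lambda+\tfrac{2}{d}\mu>0$ reduces to $\lambda+\mu>0$, hence $\lambda>-\mu>-3\mu$, which rules out the first listed case. Among the remaining three, the region $\lambda\geq\mu$ yields $c_4\leq c_3\leq c_2$, so the minimum is $c_4=2\mu/R$; the regions $-\mu<\lambda\leq 0$ and $0<\lambda\leq\mu$ both yield $c_3$ as the minimum, namely $\frac{4\mu(\lambda+\mu)}{(\lambda+3\mu)R}$. This produces precisely the dichotomy stated in Proposition~\ref{prop:first-eig}, with the boundary value $\lambda=\mu$ giving $c_3=c_4=2\mu/R$ consistently. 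The eigenspaces of dimension two in each case come by specialization of Theorem~\ref{thm:eigdisk}(iv) at $n=2$ (whence $r(\cos\theta,-\sin\theta)=(x_1,-x_2)$ and $r(\sin\theta,\cos\theta)=(x_2,x_1)$) and directly from Theorem~\ref{thm:eigdisk}(iii), after rewriting $r^2\cos(2\theta)=x_1^2-x_2^2$ and $r^2\sin(2\theta)=2x_1x_2$.

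The remaining identities I would verify by direct computation. When $\lambda>\mu$, the eigenfunctions are linear, so $|\bo u_i|^2=r^2$ is immediate; their strain tensors $e(\bo u_1)=\operatorname{diag}(1,-1)$ and $e(\bo u_2)=\bigl(\begin{smallmatrix}0&1\\1&0\end{smallmatrix}\bigr)$ are constant and trace-free, whence $Ae(\bo u_i)=2\mu\,e(\bo u_i)$ and $Ae(\bo u_i):e(\bo u_i)=2\mu\,|e(\bo u_i)|^2=4\mu$. When $\lambda\leq\mu$, I would set $\alpha:=(\lambda+3\mu)/(\lambda+\mu)$ and expand $|\bo u_1|^2+|\bo u_2|^2$; the cross terms proportional to $(R^2-r^2)(x_1^2-x_2^2)$ enter with opposite signs and cancel, and the polar identity $(x_1^2-x_2^2)^2+4x_1^2x_2^2=r^4$ collapses the remainder to $8(R^2-r^2)^2+2\alpha^2 r^4$. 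For the strain energy I would use $Ae(\bo u):e(\bo u)=2\mu|e(\bo u)|^2+\lambda(\operatorname{tr} e(\bo u))^2$ and compute entry-wise for $\bo u_1$ and $\bo u_2$; the same cancellation mechanism between the two functions isolates the $r^2$ behavior and, after collecting the coefficient of $\mu$ via $\alpha-1=2\mu/(\lambda+\mu)$, yields $32\mu\alpha r^2$.

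The conceptual content is contained entirely in Theorem~\ref{thm:eigdisk} and Proposition~\ref{prop:help-order}; the only real obstacle is the algebra in the $\lambda\leq\mu$ case, where the eigenfunctions are quadratic and the strain tensor entries are linear in $(x_1,x_2)$, so that careful tracking of the cross terms between $\bo u_1$ and $\bo u_2$ is needed in order to see that the non-isotropic pieces cancel and leave the clean closed forms claimed in the statement.
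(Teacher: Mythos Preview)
Your proposal is correct and follows essentially the same route as the paper: both identify $\Lambda_1(D_R)$ by combining Theorem~\ref{thm:eigdisk} with the ordering in Proposition~\ref{prop:help-order}, and then read off the eigenfunctions from the eigenstructure already computed. You supply more detail than the paper does---in particular the monotonicity of families (iv)--(v) in $n$ and the explicit verification of the identities for $|\bo u_i|^2$ and $Ae(\bo u_i):e(\bo u_i)$---but the underlying argument is the same.
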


The proof is immediate by investigating the order of the eigenvalues found in Theorem \ref{thm:eigdisk} in view of the observations made in Proposition \ref{prop:help-order}. Knowing the eigenstructure for the disk allows us to prove the following result similar to the scalar case according to Weinstock \cite{weinstock} and Brock \cite{brock}. 


\begin{thm}\label{thm:optimality-disk}
	Suppose $\lambda>\mu$ then the disk maximizes $\Lambda_1(\Omega)$ when:
	
	{\rm (a)} $\Omega$ has fixed volume.
	
	{\rm (b)} $\Omega$ is convex with fixed perimeter.
\end{thm}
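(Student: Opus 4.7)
I would establish both parts via a common test-function construction in \eqref{eq:rayleigh}, completed by two different isoperimetric bounds on the boundary second moment $\int_{\partial\Omega}|x|^2\,ds$. Proposition~\ref{prop:first-eig} identifies, when $\lambda>\mu$, the first non-zero disk eigenfunctions as $\bo u_1^\star=(x_1,-x_2)$ and $\bo u_2^\star=(x_2,x_1)$, both satisfying $Ae(\bo u_i^\star):e(\bo u_i^\star)\equiv 4\mu$ and $|\bo u_i^\star|^2=|x|^2$. In dimension $d=2$, the admissible subspace in \eqref{eq:rayleigh} for $\Lambda_1$ has dimension $4=1+3$, so only one direction beyond $\bo R(\Omega)$ is needed. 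I would first normalize $\Omega$ by translating its boundary centroid to the origin (so $\int_{\partial\Omega}x_i\,ds=0$) and by rotating coordinates to diagonalize the boundary second-moment tensor (so $\int_{\partial\Omega}x_1 x_2\,ds=0$). The single function $\bo w:=(x_1,-x_2)$ is then $\bo L^2(\partial\Omega)$-orthogonal to every element of $\bo R(\Omega)$, the only non-trivial check being $\int_{\partial\Omega}\bo w\cdot(-x_2,x_1)\,ds=-2\int x_1 x_2\,ds=0$. Choosing $\bo S_1=\bo R(\Omega)\oplus\mathrm{span}(\bo w)$, the rigid-motion components inflate only the denominator of the Rayleigh quotient, so the maximum on $\bo S_1$ is attained at $\bo w$ and yields the common upper bound
\[ \Lambda_1(\Omega)\leq\frac{\int_\Omega Ae(\bo w):e(\bo w)\,dx}{\int_{\partial\Omega}|\bo w|^2\,ds}=\frac{4\mu\,|\Omega|}{\int_{\partial\Omega}|x|^2\,ds}.\]

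\textbf{Part (a).} It now suffices to prove $\int_{\partial\Omega}|x|^2\,ds\geq 2R|\Omega|$ with $R=\sqrt{|\Omega|/\pi}$, since combined with the preceding bound this gives $\Lambda_1(\Omega)\leq 2\mu/R=\Lambda_1(D_R)$. This is the Brock-type isoperimetric inequality stating that, under fixed area, the boundary second moment about the boundary centroid is minimized by the disk; it is classical and can be cited from the Steklov optimization literature that underlies \cite{brock}.

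\textbf{Part (b).} The sharper inequality
\[ \int_{\partial\Omega}|x|^2\,ds\geq\frac{|\Omega|\,|\partial\Omega|}{\pi}\]
is now required, from which the common bound yields $\Lambda_1(\Omega)\leq 4\pi\mu/|\partial\Omega|=\Lambda_1(D_{R'})$ for the disk $D_{R'}$ of the same perimeter as $\Omega$. Convexity enters precisely here. Letting $h(\theta)$ be the support function of $\Omega$ with pole at the boundary centroid (so $h>0$, and $h+h''\geq 0$ by convexity) and using $x(\theta)=h\bo n+h'\bo t$ with $ds=(h+h'')\,d\theta$, the classical identities $|\partial\Omega|=\int h\,d\theta$ and $|\Omega|=\tfrac12\int(h^2-h'^2)\,d\theta$, together with $\int_{\partial\Omega}|x|^2\,ds=\int h(h^2-h'^2)\,d\theta$ (obtained via the periodic identities $\int h^2 h''=-2\int h h'^2$ and $\int h'^2 h''=0$), reduce the claim to $\mathcal{I}:=\int_0^{2\pi}g(h^2-h'^2)\,d\theta\geq 0$, where $g:=h-a_0$ and $a_0:=|\partial\Omega|/(2\pi)$. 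Expanding via $h=a_0+g$ (and using $\int g=0$) and applying the periodic identity $\int g g'^2\,d\theta=-\tfrac12\int g^2 g''\,d\theta$, I expect the reorganization
\[ \mathcal{I}= 2a_0\int g^2\,d\theta+\int g^3\,d\theta-\int g g'^2\,d\theta=\int_0^{2\pi}g^2\Bigl(h+a_0+\tfrac{h''}{2}\Bigr)\,d\theta.\]
Convexity then gives $h+h''/2=\tfrac12\bigl(h+(h+h'')\bigr)\geq 0$, and $a_0>0$, so the integrand is pointwise non-negative, with equality only when $g\equiv 0$, i.e., $\Omega$ is a disk.

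\textbf{Main obstacle.} The delicate step is the algebraic reorganization of $\mathcal{I}$ in part (b): the two integrations by parts and the correct reference point (boundary centroid rather than Steiner point) must be chosen so that the defect $\int_{\partial\Omega}|x|^2\,ds-|\Omega|\,|\partial\Omega|/\pi$ collapses into a form where convexity, through $h+h''\geq 0$, supplies the required positivity. The rigid-motion normalization and the Brock-type bound used in (a) are comparatively routine.
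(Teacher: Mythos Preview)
Your argument is correct and, for the common upper bound and for part~(a), it is exactly the paper's proof: normalize $\Omega$ so that $\bo w=(x_1,-x_2)$ is $\bo L^2(\partial\Omega)$-orthogonal to $\bo R(\Omega)$, use the four-dimensional test space $\bo R(\Omega)\oplus\mathrm{span}(\bo w)$ in \eqref{eq:rayleigh} to get $\Lambda_1(\Omega)\leq 4\mu|\Omega|/\int_{\partial\Omega}|x|^2\,ds$, and cite Brock for the volume-constrained isoperimetric bound on the boundary second moment.

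The only difference is in part~(b). The paper simply quotes Weinstock's inequality $2|\Omega|/\int_{\partial\Omega}r^2\leq 2\pi/|\partial\Omega|$ for convex domains from \cite{weinstock}, whereas you supply a self-contained support-function proof of this same inequality. Your computation checks out: the identity $\int_{\partial\Omega}|x|^2\,ds=\int_0^{2\pi}h(h^2-h'^2)\,d\theta$ is correct (via the two integrations by parts you indicate), and the reorganization $\mathcal I=\int_0^{2\pi}g^2\bigl(a_0+h+\tfrac12 h''\bigr)\,d\theta$ together with $h+\tfrac12 h''=\tfrac12 h+\tfrac12(h+h'')\geq 0$ gives the sign. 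The one point you assert without justification is that the boundary centroid of a convex body lies in its interior (so that $h>0$ with this pole); this follows from a short separation argument and is worth a line. What your route buys is a self-contained treatment; what the paper's citation buys is brevity.
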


\begin{proof} For simplicity, suppose $\Omega$ has area $\pi$ (or perimeter $2\pi$). 
In view of Proposition \ref{prop:first-eig}, the first non-zero eigenvalue of the unit disk $\Bbb D$ in this case is $\Lambda_1(\Bbb D)=\Lambda_2(\Bbb D)=2\mu$.
Consider the corresponding eigenfunctions $\bo u_1=(r\cos \theta,-r\sin \theta),\bo u_2=(r\sin \theta,r\cos \theta)$. Then it is straightforward to notice that $Ae(\bo u_i):e(\bo u_i) = 4\mu$ and $|\bo u_i|^2= r^2$ on $\Bbb{R}^2$.

Consider now a general $\Omega \subset \Bbb{R}^2$ with $|\Omega| = \pi$. Let us take the space $X_1=\{\bo u_{0,1},\bo u_{0,2},\bo u_{0,3},\bo u_1\}$ as a test space in \eqref{eq:rayleigh} for $\Lambda_1(\Omega)$, with $\bo u_1 = (r\cos \theta,-r\sin\theta)$, an eigenfunction associated to the first non-zero eigenvalue of the disk. We denote
\[ \bo u_{0,1} = (1,0), \bo u_{0,2} = (0,1), \bo u_{0,3} = (-x_2,x_1),\]
a basis for the rigid motions in dimension two. We may observe that
\[ \bo u_{0,1} \cdot \bo u_1 = x_1,\ \bo u_{0,2} \cdot \bo u_1 = -x_2,\ \bo u_{0,3} \cdot \bo u_1 = -2x_1x_2.\]

Therefore the shape $\Omega$ can be translated and rotated such that $\int_{\partial \Omega} \bo u_{0,j}\cdot \bo u_1=0$. Indeed, for a fixed orientation $\alpha \in [0,2\pi]$ of $\Omega$ we can translate $\Omega$ such that $\int_{\partial \Omega} x_1 = \int_{\partial \Omega} x_2 = 0$. Denote by $\Omega_\alpha$ the resulting shape. One may observe that $\int_{\Omega_0} (-2x_1x_2) = -\int_{\partial \Omega_{\pi/2}} (-2x_1x_2)$. Therefore, there exists an $\alpha \in [0,\pi/2]$ such that $\int_{\Omega_\alpha} x_1x_2=0$. Suppose now that $\Omega$ is translated and rotated such that $\int_{\partial \Omega}\bo u_1 \cdot \bo u_{0,j} = 0,\ j=1,2,3$. Let $\bo u = \alpha_1 \bo u_{0,1}+\alpha_2 \bo u_{0,2}+\alpha_3 \bo u_{0,3}  +c_1\bo u_1$ be an element of the test space $X_1$ defined above. 

It is straightforward to observe that 
\[ Ae(\bo u ): e(\bo u) = 4\mu c_1^2 \text{ and } \int_{\partial \Omega} |\bo u|^2 = \int_{\partial \Omega} (\alpha_1^2+\alpha_2^2+\alpha_3^2r^2+ c_1^2 r^2).\]
Therefore, the maximum of the associated Rayleigh quotient is
\[ \max_{\bo u \in X_1} \frac{\int_\Omega Ae(\bo u):e(\bo u)}{\int_{\partial \Omega} |\bo u|^2}=\frac{4\mu |\Omega|}{\int_{\partial \Omega}r^2}.\]
As a direct consequence, $\Lambda_1(\Omega) \leq \frac{4\mu|\Omega|}{\int_{\partial \Omega}r^2}$. We can now answer the two questions raised in the statement of the theorem.

(a) In \cite{brock} it is shown that $\int_{\partial \Omega} r^2$ is minimized by the disk at fixed volume.

(b) In \cite{weinstock} it is shown that $2|\Omega|/\int_{\partial \Omega} r^2$ is again maximized by the disk, among convex domains with fixed perimeter. This is a consequence of the inequality
\[ \frac{2|\Omega|}{ \int_{\partial \Omega} r^2} \leq \frac{2\pi}{|\partial \Omega|},\]
which holds for all convex domains according to \cite{weinstock}.

Moreover, in both cases above, when $\Omega$ is a disk, we have $\Lambda_1(\Omega) = \frac{4\mu|\Omega|}{\int_{\partial \Omega} r^2}$, showing that the upper bound is actually attained by the disk. The conclusion follows.
\end{proof}

\begin{rem}
	The case $\lambda \leq \mu$ is more challenging. Indeed, as indicated in Proposition \ref{prop:first-eig} in this case $Ae(u_j):e(u_j)$, $j=1,2$ is no longer a constant and the proof above no longer applies. Nevertheless, numerical results shown in Section \ref{sec:num-results} show that the disk is still a maximizer even when $\lambda \leq \mu$.
\end{rem}


\subsection{Upper bounds for the Steklov-Lam\'e eigenvalues}

In order to motivate the existence of solutions for optimization problems depending on the Steklov-Lam\'e eigenvalues, we derive upper bounds for these eigenvalues in terms of the classical Steklov eigenvalues $\sigma_n(\Omega)$ defined by \eqref{eq:steklov-eigs}.

Variational characterizations exist for the Steklov eigenvalues, using Rayleigh quotients. For simplicity, consider the following one (see \cite{Bucur-Nahon} for example)
\begin{equation}
\sigma_n(\Omega) = \min_{\dim S = n+1} \max_{u \in S}\frac{\int_\Omega |\nabla u|^2}{\int_{\partial \Omega} u^2}
\end{equation}
where the minimum is taken over all subspaces $S$ of $H^1(\Omega)\setminus H_0^1(\Omega)$ having dimension $n+1$.

Various results concerning the upper bounds for Steklov eigenvalues exist, depending on different geometric quantities:
\begin{itemize}
	\item $\sigma_k(\Omega)\Per(\Omega) \leq 2k\pi$ among simply connected domains in dimension two:  \cite{hersch-payne-schiffer}, in \cite{girouard-polterovich} it is shown that the inequality is sharp. 
	\item $\sigma_k(\Omega) \leq c_d k^{2/d} \frac{|\Omega|^{\frac{d-2}{d}}}{\Per(\Omega)}$: valid in arbitrary dimension \cite{colbois-elsoufi-girouard}.
	\item $\sigma_k(\Omega) \leq C(d,k) \frac{\displaystyle |\Omega|^{\frac{1}{d-1}}}{\displaystyle \diam(\Omega)^{\frac{2d-1}{d-1}}}$: among convex sets, in arbitrary dimension, where $\diam(\Omega)$ denotes the diameter of the set $\Omega$  \cite{alsayed-bogosel-henrot-nacry}.
\end{itemize} 
It is not our purpose here to give an exhausting list. For a more complete survey see \cite{survey-girouard-polterowich}. Using these results, analogue ones can be found for the Steklov-Lam\'e eigenvalues using the result below. In the following, for simplicity, we denote by $t(d) = d(d+1)/2$, the triangular number associated to the positive integer $d$, the dimension of the space of rigid motions $\bo R(\Omega)$.

\begin{prop}	\label{prop:upper-bounds}
	For every $n\geq 1$ we have 
	\[ \Lambda_n(\Omega) \leq (2\mu+d\lambda)\sigma_{dn+d^2(d+1)/2-1}(\Omega). \]
\end{prop}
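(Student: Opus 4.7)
The plan is to invoke the Courant--Fischer characterization \eqref{eq:rayleigh} with a test subspace built out of scalar Steklov eigenfunctions, and to reduce the elasticity Rayleigh quotient to the scalar Steklov one by a pointwise bound on the integrand. Setting $m := dn + \tfrac{d^2(d+1)}{2} - 1$ so that $m+1 = d\bigl(n + \tfrac{d(d+1)}{2}\bigr)$, I would let $u_0,\dots,u_m$ be the first $m+1$ scalar Steklov eigenfunctions of $\Omega$ and form $V := \mathrm{span}(u_0,\dots,u_m) \subset H^1(\Omega)$. The variational principle for $\sigma_m$ then gives $\int_\Omega |\nabla u|^2 \leq \sigma_m(\Omega) \int_{\partial\Omega} u^2$ for every $u \in V$.

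Next I would introduce $\bo V := V^d \subset \bo H^1(\Omega)$, of dimension $d(m+1) = d^2\bigl(n+\tfrac{d(d+1)}{2}\bigr) \geq n + \tfrac{d(d+1)}{2}$, and pick any subspace $\bo S \subset \bo V$ of dimension exactly $n + \tfrac{d(d+1)}{2}$ as the competitor in \eqref{eq:rayleigh}. Summing the scalar bound componentwise delivers $\int_\Omega |\nabla \bo u|^2 \leq \sigma_m \int_{\partial\Omega} |\bo u|^2$ for every $\bo u \in \bo V$. It remains to control the elasticity integrand by the Dirichlet one: the orthogonal decomposition of $\nabla \bo u$ into symmetric and antisymmetric parts gives $|e(\bo u)|^2 \leq |\nabla \bo u|^2$, and Cauchy--Schwarz on the diagonal entries of $e(\bo u)$ yields $(\di \bo u)^2 = (\text{tr}\,e(\bo u))^2 \leq d\,|\nabla \bo u|^2$; hence $Ae(\bo u){:}e(\bo u) = 2\mu|e(\bo u)|^2 + \lambda(\di\bo u)^2 \leq (2\mu+d\lambda)|\nabla \bo u|^2$. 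Chaining these two inequalities and feeding $\bo S$ into \eqref{eq:rayleigh} then yields the stated upper bound, once one has checked that $\bo S \cap \bo H_0^1(\Omega) = \{\bo 0\}$: this is automatic because harmonic functions vanishing on $\partial\Omega$ are identically zero, so $V \cap H_0^1(\Omega) = \{0\}$ and the property passes to $\bo V = V^d$.

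The step I expect to be the main obstacle is the pointwise elasticity bound when $\lambda < 0$, still admissible under the condition $\lambda + \tfrac{2}{d}\mu > 0$: the sign of $\lambda(\di \bo u)^2$ then flips and the naive chain yields only $Ae(\bo u){:}e(\bo u) \leq 2\mu|\nabla \bo u|^2$, which is strictly weaker than $(2\mu+d\lambda)|\nabla \bo u|^2$. Repairing this will likely require the trace-free decomposition $Ae(\bo u){:}e(\bo u) = 2\mu|e_0(\bo u)|^2 + \bigl(\tfrac{2\mu}{d}+\lambda\bigr)(\di \bo u)^2$, whose two coefficients are both positive by the Lam\'e hypothesis, followed by a separate scalar Steklov estimate for each term.
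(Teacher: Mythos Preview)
Your approach is essentially the paper's own: both plug a test space built from the first $d\bigl(n+\tfrac{d(d+1)}{2}\bigr)$ scalar Steklov eigenfunctions into the min--max \eqref{eq:rayleigh} and reduce the elasticity Rayleigh quotient to the scalar one via the pointwise bound $Ae(\bo u){:}e(\bo u)\leq (2\mu+d\lambda)|\nabla\bo u|^2$. The only cosmetic difference is in assembling the test space: the paper partitions the $d\bigl(n+t(d)\bigr)$ scalar eigenfunctions directly into $n+t(d)$ vectors of $d$ components, whereas you form $\bo V=V^d$ (of strictly larger dimension) and then cut down to a subspace of the right size. Both work for the same reason---each component of any $\bo u$ in the test space lies in $V$, so the scalar Steklov bound applies coordinatewise.

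Your concern about $\lambda<0$ is well placed and, in fact, applies equally to the paper's argument: the paper also passes from $\lambda(\di\bo u)^2$ to $d\lambda\sum_i(\partial_{x_i}u_i)^2$ and then to $d\lambda|\nabla\bo u|^2$, both of which reverse when $\lambda<0$. So the stated constant $2\mu+d\lambda$ is justified by either proof only for $\lambda\geq 0$. Your trace-free fix is the correct instinct, but note that carrying it through via $|e_0(\bo u)|^2\leq|\nabla\bo u|^2$ and $(\di\bo u)^2\leq d|\nabla\bo u|^2$ yields the constant $2\mu+d\bigl(\tfrac{2\mu}{d}+\lambda\bigr)=4\mu+d\lambda$, not $2\mu+d\lambda$. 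This weaker constant is entirely sufficient for every use the paper makes of the proposition (Theorem~\ref{thm:upper-bounds} and the existence arguments need only \emph{some} finite multiple of $\sigma_m$), so your proposed repair closes the gap for the paper's purposes even if it does not recover the sharper stated constant in the negative-$\lambda$ range.
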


\begin{proof} Given $\bo u = (u_i)_{i=1}^d\in \bo H^1(\Omega)$ we have
\begin{align*}
&Ae(\bo u): e(\bo u)  = 2\mu |e(\bo u)|^2+\lambda (\di \bo u)^2 \\
& = 2\mu \sum_{i,j=1}^d\frac{1}{2}( \partial_{x_i} u_j+\partial_{x_j} u_i)^2 + \lambda( \sum_{i=1}^d\partial_{x_i}u_i)^2\\
&\leq 2\mu\sum_{i,j=1}^d (\partial_{x_i} u_j)^2+d\lambda \sum_{i=1}^d (\partial_{x_i}u_i)^2\leq (2\mu+d\lambda)\sum_{i=1}^d\|\nabla u_i\|_{\bo L^2(\Omega)}^2,
\end{align*}
where we used the classical inequality $(\sum_{i=1}^d x_i)^m \leq m\sum_{i=1}^m x_i^2$. Consider now the the first $d(n+t(d))$ eigenfunctions associated to the eigenvalues $\sigma_0(\Omega), ..., \sigma_{d(n+t(d))-1}(\Omega)$ for the Steklov problem \eqref{eq:steklov-eigs} on $\Omega$, giving a subspace of dimension $d(n+t(d))$ in $H^1(\Omega)$. Taking $n+t(d)$ vectors made of $d$ of these eigenfunctions we obtain a subspace $\bo S$ of $\bo H^1(\Omega)$ of dimension $n+t(d)$. 

Every $\bo u=(u_1,...,u_d)\in \bo S$ verifies $\|\nabla u_j\|_{L^2(\Omega)}^2 \leq \sigma_{d(n+t(d))-1}(\Omega)\|u_j\|_{L^2(\partial \Omega)}^2$, for every $j=1,...,d$. In view of the inequality proven above, we have
\[ \int_\Omega Ae(\bo u):e(\bo u) \leq (2\mu+d\lambda) \sum_{i=1}^d \|\nabla u_i\|_{\bo L^2(\Omega)}^2 \leq (2\mu+d\lambda)\sigma_{d(n+t(d))-1}(\Omega) \int_{\partial \Omega} |\bo u|^2.\]

Therefore, considering $\bo S$ as a test space in \eqref{eq:rayleigh} we obtain 
\begin{align*}
\Lambda_n(\Omega) &\leq  \max_{\bo u\in \bo S\setminus \bo H_0^1(\Omega)} \frac{\int_\Omega Ae(\bo u):e(\bo u)}{\int_{\partial \Omega} |\bo u|^2}
\leq (2\mu+d\lambda)\sigma_{nd+dt(d)-1}(\Omega).
\end{align*}
\end{proof}

Under the hypothesis $2\mu+d\lambda>0$ we have the following bounds for the Steklov-Lam\'e eigenvalues, depending on classical constraints.
\begin{thm}
	\label{thm:upper-bounds}
	Let $\Omega$ be a bounded Lipschitz domain. Then we have:
	\begin{enumerate}[label=\upshape{(\roman*)}]
		\item If the perimeter of $\Omega$ is fixed then $\Lambda_k(\Omega) \Per(\Omega)^{\frac{1}{d-1}}$ is bounded from above.
		\item If the volume of $\Omega$ is fixed then $\Lambda_k(\Omega)$ is bounded from above.
		\item If the diameter of the convex set $\Omega$ is fixed the $\Lambda_k(\Omega)$ is bounded from above.
	\end{enumerate}
\end{thm}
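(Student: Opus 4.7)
The plan is to reduce everything to the scalar Steklov case via Proposition \ref{prop:upper-bounds}, which gives $\Lambda_k(\Omega)\le (2\mu+d\lambda)\sigma_{N}(\Omega)$ with $N=dk+d^2(d+1)/2-1$, and then invoke each of the three classical Steklov upper bounds listed just before that proposition, combined where needed with elementary geometric inequalities to match the desired constraint.

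For \textrm{(i)}, I would start from the Colbois--El Soufi--Girouard bound $\sigma_N(\Omega)\le c_d N^{2/d}\,|\Omega|^{(d-2)/d}/\Per(\Omega)$, which is valid in any dimension. The isoperimetric inequality $|\Omega|^{(d-1)/d}\le C_d\Per(\Omega)$ gives $|\Omega|^{(d-2)/d}\le C_d'\Per(\Omega)^{(d-2)/(d-1)}$, so combining yields
\[
\sigma_N(\Omega)\,\Per(\Omega)^{1/(d-1)}\le C_{d,k},
\]
after simplifying the exponents. Plugging into Proposition \ref{prop:upper-bounds} gives the claimed bound on $\Lambda_k(\Omega)\Per(\Omega)^{1/(d-1)}$.

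For \textrm{(ii)}, I would again use the Colbois--El Soufi--Girouard bound but now invoke the isoperimetric inequality in the opposite direction, $\Per(\Omega)\ge C_d|\Omega|^{(d-1)/d}$, to get
\[
\sigma_N(\Omega)\le c_d N^{2/d}\frac{|\Omega|^{(d-2)/d}}{C_d|\Omega|^{(d-1)/d}}=C'_{d,k}|\Omega|^{-1/d},
\]
which is bounded once $|\Omega|$ is fixed; applying Proposition \ref{prop:upper-bounds} closes the case. For \textrm{(iii)}, I would use the last bound, $\sigma_N(\Omega)\le C(d,N)|\Omega|^{1/(d-1)}/\diam(\Omega)^{(2d-1)/(d-1)}$, valid for convex sets, together with the elementary fact that a convex body sits inside a ball of radius $\diam(\Omega)$, so $|\Omega|\le\omega_d\diam(\Omega)^d$. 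A direct exponent computation then gives $\sigma_N(\Omega)\le C''_{d,k}\diam(\Omega)^{-1}$, hence a bound on $\Lambda_k$ at fixed diameter.

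There is essentially no obstacle here beyond careful bookkeeping of exponents: once Proposition \ref{prop:upper-bounds} is in place, each of the three statements reduces to matching a single scalar Steklov bound against the appropriate isoperimetric inequality (standard in (i)--(ii), and the trivial $|\Omega|\lesssim\diam(\Omega)^d$ for convex bodies in (iii)). The only point that deserves mild care is that in (i) one must track that the Colbois--El Soufi--Girouard bound is the one available in arbitrary dimension (the Hersch--Payne--Schiffer sharp inequality only holds in dimension two and for simply connected domains), so it is the natural choice for a statement phrased in arbitrary $d$.
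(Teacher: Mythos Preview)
Your proposal is correct and follows essentially the same route as the paper: reduce to the scalar Steklov eigenvalues via Proposition~\ref{prop:upper-bounds}, then apply the Colbois--El Soufi--Girouard bound together with the isoperimetric inequality for (i)--(ii), and the convex diameter bound together with the isodiametric inequality (your trivial inclusion $|\Omega|\le\omega_d\diam(\Omega)^d$ is a weaker but sufficient version of this) for (iii). The paper's own proof is in fact terser than yours, merely naming the two ingredients in each case without writing out the exponent bookkeeping.
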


\begin{proof} (a) and (b) are a consequence of the inequality $\sigma_k(\Omega) \leq c_d k^{2/d} \frac{|\Omega|^{\frac{d-2}{d}}}{\Per(\Omega)}$ proved in   \cite{colbois-elsoufi-girouard} and of  the isoperimetric inequality. 

(c) is a consequence of the inequality $\sigma_k(\Omega) \leq C(d,k) \frac{ |\Omega|^{\frac{1}{d-1}}}{ \diam(\Omega)^{\frac{2d-1}{d-1}}}$ proved in \cite{alsayed-bogosel-henrot-nacry} and the isodiametric inequality.
\end{proof}

\section{Stability of the spectrum on variable domains}
\label{sec:existence}

In the scalar case, the behavior of the Steklov eigenvalues \eqref{eq:steklov-eigs} with respect to domain perturbations was investigated in \cite{Bogosel}, \cite{Bucur-Nahon}, \cite{stability-steklov}. It is possible to generalize all these results to the Steklov-Lam\'e case. 

For $y \in \Bbb{R}^d$, $\xi$ a unit vector and $\varepsilon>0$ we define the cone 
\[ C(y,\xi,\varepsilon) = \{x \in \Bbb{R}^d : (z-y)\cdot \xi \geq \cos \varepsilon |z-y| \text{ and } 0<|z-y|<\varepsilon\}.\]
Following \cite[Chapter 2]{henrot-pierre-english}, we say that $\Omega$ verifies the \emph{$\varepsilon$-cone condition} if for every $x \in \partial \Omega$ there exists a unit vector $\xi_x$ such that for every $y \in \overline \Omega \cap B(x,\varepsilon)$ we have $C(y,\xi_x,\varepsilon)\subset \Omega$. It can be shown that this condition is equivalent to $\Omega$ being Lipschitz with a prescribed upper bound on the Lipschitz constant. In particular, convex domains or domains star-shaped with respect to a ball verify an $\varepsilon$-cone property.

In \cite[Proposition 2.3]{Bogosel} it is shown that if $D\subset \Bbb{R}^d$ is bounded and open and $\Omega \subset D$ verifies an $\varepsilon$-cone condition then $\Per(\Omega)$ is uniformly bounded by a constant depending only on $\varepsilon$ and $D$.

In order to underline the behaviour of the Steklov-Lam\'e eigenvalues with respect to sequences of domains for which the perimeter is not continuous, let us define the weighted Steklov-Lam\'e eigenvalues. Consider $\Theta\in L^\infty(\Omega), \Theta\geq \beta>0$ and define $\Lambda(\Omega,\Theta)$ by
\begin{equation}
\left\{ \begin{array}{rcll}
-\di A(e(\bo u)) & = & 0 & \text{ in } \Omega \\
Ae(\bo u)\bo n & = & \Lambda(\Omega,\Theta) \Theta \bo u & \text{ on } \partial \Omega,
\end{array}\right.
\label{eq:weighted-Steklov-eig}
\end{equation}
with the associated variational characterization
\begin{equation}
\Lambda_n(\Omega,\Theta) = \min_{\bo S_{n}\subset \bo H^1(\Omega)}
\max_{\bo u \in \bo S_n\setminus\bo H_0^1(\Omega)} \frac{\int_\Omega Ae(\bo u):e(\bo u)}{\int_{\partial \Omega} \Theta |\bo u|^2}
\label{eq:weighted-rayleigh}
\end{equation}
where the minimum is taken over all subspaces of $\bo H^1(\Omega)$ having dimension $n+t(d)$. 
It is obvious that $\Theta \equiv 1$ gives the Steklov-Lam\'e eigenvalues. The weighted eigenvalues \eqref{eq:weighted-Steklov-eig} enter into the framework presented in \cite{Sebastian}. Moreover, \eqref{eq:weighted-rayleigh} shows that $\Theta\geq \Theta'$ implies $\Lambda_n(\Omega,\Theta)\leq \Lambda_n(\Omega,\Theta')$. 
Furthermore, Proposition \ref{prop:upper-bounds} and Theorem \ref{thm:upper-bounds} extends to weighted Steklov-Lam\'e eigenvalues, since under the hypotheses considered, we have $\Lambda_n(\Omega,\Theta) \leq \frac{1}{\beta}\Lambda_n(\Omega)$. 

We say that a sequence of domains $\Omega_\varepsilon$ converges to a domain $\Omega$ if the Hausdorff distance between their complements converges to zero. See \cite[Chapter 2]{henrot-pierre-english} for introductory aspects related to the convergence in the Hausdorff metric.

The following result is proved in \cite{Bucur-Nahon} and extends a result from \cite{Bogosel}. 

\begin{prop}	\label{prop:conv-traces}
	Let $\Omega$ be a bounded Lipschitz domain and let $(\Omega_n)$ be a sequence of domains verifying the $\varepsilon$-cone condition. Consider a weight function $\Theta \in L^\infty(\partial \Omega)$ and a sequence of weight functions $\Theta_n \in L^\infty(\partial \Omega_n)$ such that $\Theta, \Theta_n \geq \beta>0$ and
	\[ \limsup_{n\to \infty} \|\Theta_n\|_{L^\infty(\partial \Omega_n)}<\infty \text{ and } \Theta_n \mathcal H^1_{\lfloor \partial \Omega_n} \rightharpoonup \Theta \mathcal H^1_{\lfloor \partial \Omega} \]
	weackly-$*$ in the sense of measures. If $(u_n) \subset H^1(\Bbb{R}^d)$ converges weakly to $u$ in $H^1(\Bbb{R}^d)$ then
	\[ \int_{\partial \Omega_n} \Theta_n u_n^2 \to \int_{\partial \Omega} \Theta u^2 \text{ as } \varepsilon\to 0.\]
\end{prop}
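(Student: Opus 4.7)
The plan is to combine the weak-$*$ convergence of the boundary measures $\mu_n := \Theta_n \mathcal{H}^{d-1}_{\lfloor \partial \Omega_n}$ and $\mu := \Theta \mathcal{H}^{d-1}_{\lfloor \partial \Omega}$ with two consequences of the uniform $\varepsilon$-cone property of the $\Omega_n$: the Rellich compact embedding $H^1(\mathbb{R}^d) \hookrightarrow L^2(D)$ on a fixed bounded open set $D$ containing every $\overline{\Omega_n}$ and $\overline{\Omega}$, and a uniform trace inequality
\begin{equation}\label{eq:plan-trace}
\|v\|_{L^2(\partial \Omega_n)}^2 \leq C\bigl(\|v\|_{L^2(D)}^2 + \|v\|_{L^2(D)}\,\|\nabla v\|_{L^2(D)}\bigr),\qquad \forall\, v \in H^1(\mathbb{R}^d),
\end{equation}
with $C$ depending only on $\varepsilon$ and $D$. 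The latter follows from the standard trace theorem on Lipschitz domains together with the fact that $\varepsilon$-cone domains admit uniformly bounded extension operators.

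The strategy is to insert a smooth approximation $\phi\in C_c^\infty(\mathbb{R}^d)$ of $u$ and to write
\[ \int u_n^2\,d\mu_n - \int u^2\,d\mu \;=\; I_n + J_n + K, \]
where $I_n = \int (u_n^2-\phi^2)\,d\mu_n$, $J_n = \int \phi^2\,d\mu_n - \int \phi^2\,d\mu$, and $K = \int (\phi^2-u^2)\,d\mu$. For each fixed $\phi$, the term $J_n\to 0$ because $\phi^2\in C_c(\mathbb{R}^d)$ is an admissible test function for the weak-$*$ convergence $\mu_n\rightharpoonup\mu$. The error $K$ concerns only the fixed Lipschitz domain $\Omega$: factoring $\phi^2-u^2=(\phi-u)(\phi+u)$, applying Cauchy--Schwarz, and using the classical trace theorem on $\partial\Omega$ yields $|K|\leq C\,\|u-\phi\|_{H^1(\mathbb{R}^d)}\,(\|u\|_{H^1}+\|\phi\|_{H^1})$, which can be made arbitrarily small by density of $C_c^\infty$ in $H^1$.

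The hard part is to control $I_n$ uniformly in $n$ despite the varying boundaries. Factoring $u_n^2-\phi^2=(u_n-\phi)(u_n+\phi)$, Cauchy--Schwarz against $\mu_n$ and the bound $\|\Theta_n\|_\infty\leq M$ reduce the matter to estimating $\|u_n\pm\phi\|_{L^2(\partial \Omega_n)}$. The sum is uniformly bounded through \eqref{eq:plan-trace}, since $\|u_n\|_{H^1}$ is bounded. For the difference, applying \eqref{eq:plan-trace} to $v=u_n-\phi$ and invoking the strong $L^2(D)$ convergence $u_n\to u$ from Rellich, together with the uniform $H^1$ bound on $u_n-\phi$, gives
\[ \limsup_{n\to\infty}\|u_n-\phi\|_{L^2(\partial \Omega_n)}^2 \leq C\bigl(\|u-\phi\|_{L^2(D)}^2 + \|u-\phi\|_{L^2(D)}\,M'\bigr). \]
This bound tends to zero as $\phi\to u$ in $H^1(\mathbb{R}^d)$, so $\limsup_{n\to\infty}|I_n|$ also vanishes in the limit. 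Passing first to the limit $n\to\infty$ and then letting $\phi\to u$ in $H^1$ closes the argument.
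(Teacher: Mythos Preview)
The paper does not give its own proof of this proposition: it is quoted from \cite{Bucur-Nahon} (extending an earlier version in \cite{Bogosel}) and used as a black box in the subsequent stability arguments. Your argument is correct and is essentially the standard one used in those references: approximate $u$ by a smooth $\phi$, handle the $\phi^2$ term by the weak-$*$ convergence of the boundary measures, and control the cross terms via a uniform multiplicative trace inequality for $\varepsilon$-cone domains combined with the Rellich compact embedding $H^1(\Bbb R^d)\hookrightarrow L^2_{\mathrm{loc}}$.

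Two small remarks. First, you implicitly assume that all $\overline{\Omega_n}$ lie in a fixed bounded open set $D$; this is not literally part of the stated hypotheses, but it holds in every use of the proposition in the paper (where $\Omega_n\to\Omega$ for the complementary Hausdorff distance), and without it the Rellich step would not be available. Second, your justification of the uniform trace inequality \eqref{eq:plan-trace} via ``uniformly bounded extension operators'' is a slight detour: what is actually used is that $\varepsilon$-cone domains are uniformly Lipschitz, hence the constant in the multiplicative trace estimate on $\partial\Omega_n$ depends only on $\varepsilon$ (and one then enlarges the right-hand side from $\Omega_n$ to $D$). Either way the conclusion is the same.
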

This result is a first step towards the desired stability result. In addition, the proof requires some uniform dependence on the domain for the constant in Korn's inequality
\begin{equation}\label{eq:korn}
\|\nabla \bo u\|_{L^2(\Omega)}^2\leq C_1(\Omega) \|e(\bo u)\|_{L^2(\Omega)}^2+C_2(\Omega)\|\bo u\|_{L^2(\Omega)}^2,
\end{equation}
for all $\bo u\in\bo H^1(\Omega)$. There are few cases in which the dependence of the constants of the domain is explicited. In particular, in \cite{korn-book} it is shown that if $\Omega$ has bounded diameter, is star-shaped with respect to a ball $B_{r_1}$ of radius $r_1$ and $\gamma$ is the distance between $\partial \Omega$ and $B_{r_1}$ then we may choose
\begin{equation}
C_1(\Omega)=C_1(\diam(\Omega)/r_1)^{d+1} \text{ and }C_2(\Omega) = C_2(\diam(\Omega)/r_1)^d\gamma^{-2},
\label{eq:uniform-constants-korn}
\end{equation}
with $C_1,C_2$ dimensional constants. Inequality \eqref{eq:korn} with constants \eqref{eq:uniform-constants-korn} is a consequence of \cite[Theorem 2.10]{korn-book} and is also presented in \cite{kondratiev-korn}. This result is of particular interest in our case, since bounds on the symmetrized gradient, together with bounds on the gradient in a small ball are enough to obtain global bounds on the gradient. For the sake of completeness, we recall the result below.

\begin{thm}(Theorem 2.10 from \cite{korn-book})
	\label{thm:small-to-large}
	Suppose that $\Omega\subset \Bbb{R}^d$ is a bounded domain of bounded diameter and $\Omega$ is star-shaped with respect to the ball $B_{r_1} = \{|x|<r_1\}$. Then for any $\bo u \in \bo H^1(\Omega)$ we have the inequality
	\begin{equation}
	\label{eq:small-to-large}
	\|\nabla \bo u\|_{\bo L^2(\Omega)} \leq C_1\Big( \frac{\diam(\Omega)}{r_1}\Big)^{d+1}\|e(\bo u)\|^2_{\bo L^2(\Omega)}+ C_2\Big( \frac{\diam(\Omega)}{r_1}\Big)^{d} \|\nabla \bo u\|_{\bo L^2(B_{r_1})}^2,
	\end{equation}
	where $C_1,C_2$ are constants depending on the dimension $d$.
\end{thm}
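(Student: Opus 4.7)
The plan is to split $\bo u$ into a rigid motion plus a remainder, control each piece separately, and then propagate information from the small ball $B_{r_1}$ to the full star-shaped domain $\Omega$ via a Bogovskii-type integral representation.

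\textbf{Step 1 (rigid motion extraction).} First, I would select the unique rigid motion $\bo r(x) = \bo a + Bx$ with $B^T = -B$ that best fits $\bo u$ on $B_{r_1}$ in the sense that $\bo w := \bo u - \bo r$ is $\bo L^2(B_{r_1})$-orthogonal to $\bo R(B_{r_1})$. Since $e(\bo r) = 0$, one has $e(\bo w) = e(\bo u)$, and the orthogonality implies $\int_{B_{r_1}} (\nabla \bo w - (\nabla \bo w)^T) = 0$, which fixes the skew-symmetric part of the gradient on average over $B_{r_1}$.

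\textbf{Step 2 (quantitative Korn on the star-shaped domain).} The heart of the argument is to prove that, for this $\bo w$,
\begin{equation*}
\|\nabla \bo w\|_{\bo L^2(\Omega)}^2 \;\leq\; C\Big(\frac{\diam(\Omega)}{r_1}\Big)^{d+1}\|e(\bo u)\|_{\bo L^2(\Omega)}^2.
\end{equation*}
The key tool is an explicit integral representation formula for $\nabla \bo w$ at a point $x \in \Omega$ in terms of $e(\bo w)$ along the segments joining $x$ to points of $B_{r_1}$, which is made possible by the star-shaped property. Such a formula is obtained by applying the fundamental theorem of calculus to $\nabla \bo w$ along rays from $x$ to every point $y \in B_{r_1}$ (using the classical identity $\partial_i (\nabla \bo w)_{jk} = \partial_j e_{ik}(\bo w) + \partial_k e_{ij}(\bo w) - \partial_i e_{jk}(\bo w)$ to express the gradient of the skew-symmetric part in terms of derivatives of the symmetric part), then averaging in $y$ against a suitable mollifier supported in $B_{r_1}$. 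The resulting kernel is bounded by the geometric parameter $\diam(\Omega)/r_1$ to appropriate powers; an $L^2$-Young/Calder\'on--Zygmund estimate on the convolution kernel then yields the stated constant $C_1(\diam(\Omega)/r_1)^{d+1}$. This quantitative tracking of the kernel's dependence on the geometry is the main obstacle of the proof.

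\textbf{Step 3 (rigid motion part).} Since $\nabla \bo r \equiv B$ is constant on $\Omega$, we have
\begin{equation*}
\|\nabla \bo r\|_{\bo L^2(\Omega)}^2 = |B|^2\,|\Omega| \;\leq\; \frac{|\Omega|}{|B_{r_1}|}\,\|\nabla \bo r\|_{\bo L^2(B_{r_1})}^2,
\end{equation*}
and the volume ratio is controlled by $C (\diam(\Omega)/r_1)^d$ via $|\Omega| \leq c_d \diam(\Omega)^d$. Writing $\nabla \bo r = \nabla \bo u - \nabla \bo w$ on $B_{r_1}$ and using the bound from Step 2 (restricted to $B_{r_1}$) together with $\|e(\bo u)\|_{\bo L^2(B_{r_1})} \leq \|e(\bo u)\|_{\bo L^2(\Omega)}$ gives control of $\|\nabla \bo r\|_{\bo L^2(B_{r_1})}^2$ by $\|\nabla \bo u\|_{\bo L^2(B_{r_1})}^2$ plus a symmetric-gradient term absorbed by the first summand of the target inequality.

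\textbf{Step 4 (assembly).} Finally, the triangle inequality $\|\nabla \bo u\|_{\bo L^2(\Omega)}^2 \leq 2\|\nabla \bo w\|_{\bo L^2(\Omega)}^2 + 2\|\nabla \bo r\|_{\bo L^2(\Omega)}^2$ combined with Steps 2 and 3 yields the desired bound, with the constants $C_1, C_2$ depending only on $d$. The only genuinely delicate part is Step 2; Steps 1, 3 and 4 are soft and essentially algebraic once the quantitative Korn estimate is in hand.
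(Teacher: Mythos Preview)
The paper does not prove this statement at all: it is quoted verbatim as Theorem 2.10 from the reference \cite{korn-book} (introduced with the words ``For the sake of completeness, we recall the result below''), and is used as a black box in the subsequent arguments. There is therefore no ``paper's own proof'' to compare against.

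That said, your outline is a reasonable sketch of how such quantitative Korn inequalities on star-shaped domains are typically established, and is in the spirit of the arguments one finds in the cited source. The decomposition into a rigid motion plus a remainder orthogonal to $\bo R(B_{r_1})$, followed by an integral representation along rays from $B_{r_1}$ using the identity $\partial_i(\partial_j w_k - \partial_k w_j) = \partial_j e_{ik}(\bo w) + \partial_k e_{ij}(\bo w) - 2\partial_i e_{jk}(\bo w)$ (or its variants), is the standard route. The genuinely delicate part, as you correctly flag, is Step~2: obtaining the precise power $(\diam(\Omega)/r_1)^{d+1}$ requires careful bookkeeping of the kernel bounds in the ray-averaging operator, and your sketch does not carry this out. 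If you intend to include an actual proof rather than a citation, that kernel estimate would need to be made explicit; otherwise, citing the external reference as the paper does is entirely appropriate here.
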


In the following, we prove a result similar to \cite[Theorem 2]{Sebastian} for the case of moving domains.

\begin{thm}\label{thm:uniform-bounds-H1}
	Let $\Omega_n\subset \Bbb{R}^2$ be a sequence of domains verifying the $\varepsilon$-cone property, converging in the Hausdorff metric to the bounded open domain $\Omega$. Suppose that $\Omega_n, \Omega$ are star shaped with respect to $B_{r_1}$ with compact support inside $\Omega$. Suppose the weights $\Theta_n$ verify the hypotheses of Proposition \ref{prop:conv-traces}. 
	
	Let $K\subset \Omega$ be a compact set with $B_{r_1}\subset K$. Then for every sequence $\bo u_n \in \bo H^1(\Omega_n)$ there exists a constant $C$, independent of $\bo u_n$, such that
	\[ \|\bo u_n\|_{\bo H^1(K)}^2 \leq C\left(\|e(\bo u_n)\|_{\bo L^2(\Omega_n)}^2+ \int_{\partial \Omega_n} \Theta_n |\bo u_n|^2\right).\]
	
	Moreover, there exists a constant $C$, independent of $\bo u_n$, such that 
	\begin{equation}\label{eq:uniform-bound-H1}
	\|\bo u_n\|_{\bo H^1(\Omega_n)}^2 \leq C\left(\|e(\bo u_n)\|_{\bo L^2(\Omega_n)}^2+ \int_{\partial \Omega_n} \Theta_n |\bo u_n|^2\right).
	\end{equation}
\end{thm}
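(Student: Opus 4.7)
The plan is a two-stage argument: first prove the interior estimate on $K$ by contradiction using the small-to-large Korn inequality (Theorem \ref{thm:small-to-large}) together with the weighted trace convergence of Proposition \ref{prop:conv-traces}; then deduce the global estimate \eqref{eq:uniform-bound-H1} from the interior estimate by a direct application of Theorem \ref{thm:small-to-large} plus a uniform Poincar\'e--Wirtinger inequality. The uniformity in $n$ of all constants will rest on the fact that the $\Omega_n$ are uniformly bounded (by Hausdorff convergence) and star-shaped with respect to the common ball $B_{r_1}$, which fixes the ratio $\diam(\Omega_n)/r_1$ appearing in \eqref{eq:small-to-large} and also gives a uniform Poincar\'e--Wirtinger constant.

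For the interior estimate, suppose it fails. Then there is a sequence $\bo u_n\in\bo H^1(\Omega_n)$ with $\|\bo u_n\|_{\bo H^1(K)}=1$ and $\|e(\bo u_n)\|_{\bo L^2(\Omega_n)}^2+\int_{\partial\Omega_n}\Theta_n|\bo u_n|^2\to 0$. Applying Theorem \ref{thm:small-to-large} and using $B_{r_1}\subset K$ bounds $\|\nabla\bo u_n\|_{\bo L^2(\Omega_n)}^2$ by $C\|e(\bo u_n)\|_{\bo L^2(\Omega_n)}^2+C\|\bo u_n\|_{\bo H^1(K)}^2$, hence uniformly. A uniform Poincar\'e--Wirtinger inequality centered on $B_{r_1}$ then controls $\|\bo u_n\|_{\bo L^2(\Omega_n)}$ via the gradient and the average on $B_{r_1}\subset K$, yielding a uniform bound on $\|\bo u_n\|_{\bo H^1(\Omega_n)}$.

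Using the uniform extension operator for $\varepsilon$-cone domains (Chenais), I extend to $\tilde{\bo u}_n\in\bo H^1(\Bbb{R}^d)$ uniformly bounded; passing to a subsequence gives weak convergence to some $\bo u$ in $\bo H^1(\Bbb{R}^d)$ and strong $\bo L^2$ convergence on compacts. Weak lower semicontinuity applied to $e(\bo u_n)\to 0$ in $\bo L^2$ on each compact subset of $\Omega$ gives $e(\bo u)=0$ on $\Omega$, so $\bo u$ is a rigid motion there. Proposition \ref{prop:conv-traces} applied to each component yields $\int_{\partial\Omega}\Theta|\bo u|^2=\lim\int_{\partial\Omega_n}\Theta_n|\bo u_n|^2=0$, and since $\Theta\geq\beta>0$ the trace of $\bo u$ vanishes on $\partial\Omega$. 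A rigid motion vanishing on $\partial\Omega$ is identically zero, so $\bo u\equiv 0$. Strong $\bo L^2$ convergence on $K$ gives $\|\bo u_n\|_{\bo L^2(K)}\to 0$; the classical second Korn inequality on the \emph{fixed} domain $K$ then forces $\|\nabla\bo u_n\|_{\bo L^2(K)}\to 0$, contradicting $\|\bo u_n\|_{\bo H^1(K)}=1$.

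For the global estimate, Theorem \ref{thm:small-to-large} gives $\|\nabla\bo u_n\|_{\bo L^2(\Omega_n)}^2\leq C\|e(\bo u_n)\|_{\bo L^2(\Omega_n)}^2+C\|\bo u_n\|_{\bo H^1(K)}^2$ (since $B_{r_1}\subset K$), and the interior estimate absorbs the second term into the target right-hand side. The $\bo L^2(\Omega_n)$ norm is handled by the uniform Poincar\'e--Wirtinger inequality with average over $B_{r_1}$: $\|\bo u_n-\overline{\bo u_n}_{B_{r_1}}\|_{\bo L^2(\Omega_n)}^2\leq C\|\nabla\bo u_n\|_{\bo L^2(\Omega_n)}^2$, while $|\overline{\bo u_n}_{B_{r_1}}|^2\leq C\|\bo u_n\|_{\bo H^1(K)}^2$ is again controlled by the interior estimate. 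The main obstacle is Stage 1, specifically ensuring that every tool (small-to-large Korn, Poincar\'e--Wirtinger, the extension operator, and the weighted trace convergence) operates with a constant uniform in $n$; this is exactly the role played by the common star-shapedness with respect to $B_{r_1}$ and the $\varepsilon$-cone property.
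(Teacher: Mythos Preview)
Your argument follows the same overall architecture as the paper's proof: a contradiction argument for the interior estimate using the small-to-large Korn inequality (Theorem~\ref{thm:small-to-large}), uniform extension, compactness, identification of the weak limit as a rigid motion, and Proposition~\ref{prop:conv-traces} to force the limit to vanish; then the global bound follows by taking $K=\overline{B_{r_1}}$ in the interior estimate and feeding it into \eqref{eq:small-to-large}. Two implementation differences are worth noting. First, to bound $\|\bo u_n\|_{\bo L^2(\Omega_n)}$ you invoke a uniform Poincar\'e--Wirtinger inequality on domains star-shaped with respect to $B_{r_1}$, whereas the paper uses the uniform lower bound on the first Robin eigenvalue of Bucur--Giacomini; your route is more elementary and equally valid. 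Second, to pass from $e(\bo u)=0$ to the contradiction, the paper upgrades to \emph{strong} $\bo H^1$ convergence on an exhaustion $K_m=\overline{(1-\tfrac{1}{m+1})\Omega}$ (each $K_m$ star-shaped with respect to $B_{r_1/2}$, so Korn holds with uniform constants), while you use weak lower semicontinuity to get $e(\bo u)=0$ and then apply ``the classical second Korn inequality on the fixed domain $K$'' to conclude $\|\nabla\bo u_n\|_{\bo L^2(K)}\to 0$.

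That last step is the one place you should tighten: the theorem imposes no regularity on the compact $K$ beyond $B_{r_1}\subset K\subset\Omega$, so Korn's second inequality need not hold on $K$ itself. The fix is easy and is essentially what the paper does: choose a star-shaped compact $K'$ with $K\subset K'\subset\Omega$ (e.g.\ a suitable $K_m$ as above), apply Korn on $K'$ (or apply \eqref{eq:small-to-large} on $K'$ using $\|\nabla\bo u_n\|_{\bo L^2(B_{r_1})}\to 0$, which follows from Korn on the ball $B_{r_1}$ together with $\|\bo u_n\|_{\bo L^2(B_{r_1})}\to 0$ and $\|e(\bo u_n)\|_{\bo L^2(B_{r_1})}\to 0$), and then restrict to $K$. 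With this adjustment your proof is complete and slightly more streamlined than the paper's.
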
 

\begin{proof} Since $\Omega_n$ verify the $\varepsilon$-cone property and converge to $\Omega$, which is bounded, we may assume without loss of generality that $\Omega_n$ are contained in a ball $B$ for $n$ large enough.

Following \cite[Proposition 2.2.17]{henrot-pierre-english} the compact $K$ is contained in $\Omega_n$ for $n$ large enough. In order to prove the first inequality, assume that there exists a sequence $\bo u_n\in \bo H^1(\Omega_n)$ such that 
\[ \|\bo u_n\|_{\bo H^1(K)} = 1 \text{ and } \|e(\bo u_n)\|_{\bo L^2(\Omega_n)}^2+ \int_{\partial \Omega_n} \Theta_n |\bo u_n|^2<\frac{1}{n}.\]
The uniform bounds for $\|e(\bo u_n)\|_{\bo L^2(\Omega_n)}$ and $\|\nabla \bo u_n\|_{\bo L^2(B_{r_1})}$ together with Theorem \ref{thm:small-to-large} imply that $\|\nabla \bo u_n\|_{\bo L^2(\Omega_n)}$ is bounded uniformly with respect to $n$. Moreover, 
\[\frac{1}{n}> \int_{\partial \Omega_n} \Theta_n |\bo u_{n}|^2 \geq \beta \int_{\partial \Omega_n} |\bo u_{n}|^2.\]
Thus, if $u$ is a generic component of $\bo u_{n}$ we find that $\|\nabla u\|_{L^2(\Omega_n)}^2+\beta \|u\|_{L^2(\partial \Omega_n)}^2$ is uniformly bounded from above. The first Robin-Laplace eigenvalue defined for $\beta>0$ by 
\[\lambda_{1,\beta}(\Omega) = \inf_{u \in H^1(\Omega), u\neq 0}\frac{\int_\Omega |\nabla u|^2+ \beta \int_{\partial \Omega} u^2}{\int_\Omega u^2}\]
is minimized by the ball when the volume of $\Omega$ is fixed. The reader can consult \cite{robin-bucur-giacomini} and the references therein. Moreover, if $|\Omega|$ has an upper bound, then, in view of \cite[Corollary 3.2]{robin-bucur-giacomini}, $r\mapsto \lambda_{1,\beta}(B_r)$ is strictly decreasing and therefore has a strictly positive lower bound $q_\beta>0$, since $r$ is bounded from above. In our case, $\Omega_n$ have uniformly bounded perimeters, since $\Omega_n$ have the $\varepsilon$-cone property (see \cite[Proposition 2.3]{Bogosel}). Therefore, in view of the isoperimetric inequality, they also have uniformly bounded volumes. In view of the arguments above, there exists $q_\beta>0$ such that 
\[\int_{\Omega_n} |\nabla u|^2+ \beta \int_{\partial \Omega_n} u^2 \geq q_\beta \int_{\Omega_n} u^2.\]
As a consequence $\|\bo u_n\|_{\bo L^2(\Omega_n)}$ are bounded, implying that $\|\bo u_n\|_{\bo H^1(\Omega_n)}$ are uniformly bounded from above.

The sets $\Omega_n$ have Lipschitz boundaries with uniformly bounded constants, therefore, the extension operators from $H^1(\Omega_n)$ to $H^1(B)$ are uniformly bounded. Thus, we may consider the extensions $\widetilde{\bo u_n}\in \bo H^1(B)$ of $\bo u_n$, which are uniformly bounded in $\bo H^1(B)$. Up to extracting a subsequence, we may assume $\widetilde{\bo u_n}$ converge weakly to $\tilde{\bo u} \in \bo H^1(B)$ and thus strongly in $\bo L^2(B)$. 

The set $\Omega$ is star shaped with respect to $B_{r_1}$ and the distance between $B_{r_1}$ and $\partial \Omega$ is strictly positive. It is, thus, possible to write $\Omega$ as a union of compact sets $K_m$, $m\geq 1$ such that $B_{r_1}\subset K_m$, $K_m \subset K_{m+1}$ and $K_m$ are star-shaped with respect to $B_{r_1/2}$. Assuming $B_{r_1}$ is centered at the origin, it is enough to consider $K_m = \overline{ (1-\frac{1}{m+1})\Omega}$ for $m\geq 1$.

Fix $m$ and a compact $K_m$ defined as above. In \cite[Proposition 2.2.17]{henrot-pierre-english} it is proved that if $\Omega_n \to \Omega$ in the Hausdorff metric and $K$ is a compact contained in $\Omega$ then $K$ is contained in $\Omega_n$ for all $n$ large enough. Therefore for $n$ large enough $K_m \subset \Omega_n$. Moreover, in view of the definition of $K_m$, the constants in Korn's inequality \eqref{eq:korn} may be chosen uniform with respect to $m$ as in \eqref{eq:uniform-constants-korn}. Therefore, for $n \geq m$ and $p \geq 0$ we have
\[\|\nabla \bo (\bo u_{n+k}-\bo u_n)\|_{\bo L^2(K_m)}^2\leq C_1 \|e(\bo u_{n+k})-e(\bo u_n)\|_{\bo L^2(K_m)}^2+C_2\|\bo u_{n+k}-\bo u_n\|_{\bo L^2(K_m)}^2.\]
Since $(\bo u_n)$ converge stronglky in $\bo L^2(K_m)$ and $\|e(\bo u_n)\|_{\bo L^2(K_m)} \to 0$ we find that $(\nabla \bo u_{n})$ is a Cauchy sequence in $\bo L^2(K_m)$, implying that $\bo u_n$ converges strongly to $\widetilde{\bo u}$ in $\bo H^1(K_m)$. As a consequence $e(\widetilde{\bo u})=0$ in $K_m$, implying that $\widetilde{\bo u}$ is a rigid motion in $K_m$ for every $m$. Since $K_m$ is an increasing sequence of compacts, taking $m\to \infty$ we find that $\widetilde{\bo u}$ is a rigid motion on $\Omega$. Applying Proposition \ref{prop:conv-traces} we also find that $\int_{\partial \Omega} \Theta |\widetilde{\bo u}|^2 = 0$, showing that $\widetilde{\bo u}=0$ on $\partial \Omega$. In conclusion $\tilde{\bo u}=0$ in $\Omega$. In particular, for $m$ large enough we have $K \subset K_m$ so $\widetilde{\bo u}=0$ on $K$. However, the strong convergence of $\bo u_n$ to $\tilde{\bo u}$ in $\bo H^1(K)$ implies $\|\tilde{\bo u}\|_{\bo H^1(K)}=1$, a contradiction.

In order to prove \eqref{eq:uniform-bound-H1} it is enough to pick $K=\overline{B_{r_1}}$ and use \eqref{eq:small-to-large}. \end{proof}

We are now ready to prove the stability result for Steklov-Lam\'e eigenvalues. 

\begin{thm}\label{thm:weight-continuity}
	Let $\Omega$ be a bounded Lipschitz domain and let $\Omega_n$ be a sequence of domains with $\varepsilon$-cone property, converging to $\Omega$ for the Hausdorff complementary distance. 
	Assume the hypotheses of Theorem \ref{thm:uniform-bounds-H1} are verified. 
	Consider weights $\Theta\in L^\infty(\partial \Omega), \Theta_n \in L^\infty(\partial \Omega_n)$ verifying the hypotheses of Proposition \ref{prop:conv-traces}.
	
	Then for all $k\geq 1$ we have
	\[ \lim_{n\to \infty} \Lambda_k (\Omega_n,\Theta_n)= \Lambda_k(\Omega,\Theta).\]
\end{thm}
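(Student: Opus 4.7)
The strategy is the classical one for proving continuity of eigenvalues with respect to domain perturbation: establish separately the two inequalities
\[
\limsup_{n\to\infty}\Lambda_k(\Omega_n,\Theta_n)\le\Lambda_k(\Omega,\Theta)\le\liminf_{n\to\infty}\Lambda_k(\Omega_n,\Theta_n),
\]
using the Rayleigh characterization \eqref{eq:weighted-rayleigh}. For the upper bound one feeds optimal test functions on $\Omega$ into the variational formulation on $\Omega_n$; for the lower bound one starts from eigenfunctions on $\Omega_n$, shows they are $\bo H^1$-bounded, extracts a weak limit, and uses it as a test subspace on $\Omega$.

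\textbf{Upper bound.} I would fix an optimal $(k+t(d))$-dimensional test space $\bo S\subset\bo H^1(\Omega)$ for $\Lambda_k(\Omega,\Theta)$ and extend each element to a function in $\bo H^1(\Bbb{R}^d)$ using a bounded extension operator (available because $\Omega$ is Lipschitz). Restricting these extensions to $\Omega_n$ yields a space $\bo S_n\subset\bo H^1(\Omega_n)$. The Hausdorff convergence $\Omega_n\to\Omega$ together with the $\varepsilon$-cone property gives $|\Omega\triangle\Omega_n|\to0$, so dominated convergence produces $\int_{\Omega_n}Ae(\bo u)\!:\!e(\bo u)\to\int_{\Omega}Ae(\bo u)\!:\!e(\bo u)$, while Proposition~\ref{prop:conv-traces} gives $\int_{\partial\Omega_n}\Theta_n|\bo u|^2\to\int_{\partial\Omega}\Theta|\bo u|^2$. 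On the finite-dimensional space $\bo S$ these convergences are uniform on the unit sphere, so the max of the Rayleigh quotient over $\bo S_n$ converges to the max over $\bo S$; and the dimension of $\bo S_n$ equals that of $\bo S$ for $n$ large, because linear independence of traces is preserved under the convergence of boundary integrals. The min-max characterization of $\Lambda_k(\Omega_n,\Theta_n)$ then delivers the upper bound.

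\textbf{Lower bound.} Let $\bo u_n^{(1)},\ldots,\bo u_n^{(k)}$ be eigenfunctions of $\Omega_n$ associated with $\Lambda_1(\Omega_n,\Theta_n),\ldots,\Lambda_k(\Omega_n,\Theta_n)$, orthonormalized in $\bo L^2(\partial\Omega_n,\Theta_n\mathcal H^{d-1})$ and orthogonal to $\bo R(\Omega_n)$. Proposition~\ref{prop:upper-bounds} with Theorem~\ref{thm:upper-bounds} (and the uniform bound on $|\Omega_n|$ coming from the $\varepsilon$-cone property) shows that $\Lambda_j(\Omega_n,\Theta_n)$ stays bounded. Since the eigenvalue equation gives $\|e(\bo u_n^{(j)})\|_{\bo L^2(\Omega_n)}^2\le C\Lambda_j(\Omega_n,\Theta_n)$, Theorem~\ref{thm:uniform-bounds-H1} provides uniform $\bo H^1(\Omega_n)$ bounds on $\bo u_n^{(j)}$. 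Using uniformly bounded extension operators (guaranteed by the uniform Lipschitz constant of the $\Omega_n$), I extend each $\bo u_n^{(j)}$ to $\tilde{\bo u}_n^{(j)}\in\bo H^1(\Bbb{R}^d)$ and extract a weakly convergent subsequence $\tilde{\bo u}_n^{(j)}\rightharpoonup\bo u^{(j)}$ in $\bo H^1(\Bbb{R}^d)$, strongly in $\bo L^2_{\mathrm{loc}}$. Polarizing Proposition~\ref{prop:conv-traces} yields $\int_{\partial\Omega}\Theta\,\bo u^{(j)}\!\cdot\!\bo u^{(l)}=\delta_{jl}$ and orthogonality of the limits to $\bo R(\Omega)$, so $\bo S:=\mathrm{span}\{\bo u^{(1)},\ldots,\bo u^{(k)}\}\oplus\bo R(\Omega)$ has dimension $k+t(d)$. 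Now for $\bo v=\sum_j\alpha_j\bo u^{(j)}+\bo r\in\bo S$, the rigid motion contributes $0$ to the numerator, and the denominator satisfies $\int_{\partial\Omega}\Theta|\bo v|^2\ge\sum_j\alpha_j^2$ by the orthogonality just proved. Choosing the eigenfunctions on $\Omega_n$ also $A$-orthogonal (standard for a compact self-adjoint problem), the sum $\bo w_n=\sum_j\alpha_j\bo u_n^{(j)}$ satisfies $\int_{\Omega_n}Ae(\bo w_n)\!:\!e(\bo w_n)=\sum_j\alpha_j^2\Lambda_j(\Omega_n,\Theta_n)\le\Lambda_k(\Omega_n,\Theta_n)\sum_j\alpha_j^2$. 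Exhausting $\Omega$ by compacts $K\subset\Omega_n$ and using weak lower semicontinuity of the quadratic form $\int_K Ae(\cdot)\!:\!e(\cdot)$ on each $K$, then letting $K\uparrow\Omega$, I conclude $\int_\Omega Ae(\bo w)\!:\!e(\bo w)\le\bigl(\liminf_n\Lambda_k(\Omega_n,\Theta_n)\bigr)\sum_j\alpha_j^2$, where $\bo w=\sum_j\alpha_j\bo u^{(j)}$. Hence $\max_{\bo v\in\bo S}R(\bo v)\le\liminf_n\Lambda_k(\Omega_n,\Theta_n)$, and the min-max characterization of $\Lambda_k(\Omega,\Theta)$ finishes the lower bound.

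\textbf{Main obstacle.} The delicate step is the lower-bound argument: I must ensure that after passing to a weak $\bo H^1(\Bbb{R}^d)$ limit, the limits $\bo u^{(j)}$ remain linearly independent and orthogonal to $\bo R(\Omega)$ in $\bo L^2(\partial\Omega,\Theta)$, so that $\bo S$ has the right dimension and is admissible in the min-max for $\Lambda_k(\Omega,\Theta)$; this relies crucially on Proposition~\ref{prop:conv-traces} to transfer the boundary orthogonality through the limit. A second subtle point is comparing the elastic energies on the moving domains $\Omega_n$ with that on $\Omega$: because $\Omega_n$ need not contain $\Omega$, the weak lower semicontinuity must be applied on compact subdomains of $\Omega$ and then one passes to the full domain by monotone exhaustion, which is why the star-shapedness hypothesis from Theorem~\ref{thm:uniform-bounds-H1} is essential.
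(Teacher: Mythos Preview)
Your proposal is correct and follows essentially the same two-step min--max strategy as the paper: upper semicontinuity via extending an optimal test space for $\Lambda_k(\Omega,\Theta)$ to $\Omega_n$ and passing the Rayleigh quotient to the limit, and lower semicontinuity via extracting weak $\bo H^1(\Bbb{R}^d)$-limits of eigenfunctions on $\Omega_n$ (with uniform $\bo H^1$-bounds from Theorem~\ref{thm:uniform-bounds-H1}) and using Proposition~\ref{prop:conv-traces} to preserve boundary orthonormality. The only cosmetic differences are that the paper works with a full $(k+t(d))$-dimensional adapted basis of the optimal subspace on $\Omega_n$ rather than separating out $\bo R(\Omega)$ explicitly, and in the upper bound it extracts a convergent subsequence of the argmax coefficients instead of arguing uniform convergence on the unit sphere; both variants are equivalent.
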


\begin{proof} The proof is divided in two steps. 

{\bf Lower semicontinuity.} For each $n$ consider $\bo S_n\subset \bo H^1(\Omega_n)$ a subspace which attains $\Lambda_k(\Omega,\Theta_n)$ in \eqref{eq:weighted-rayleigh}. Following \cite{Bucur-Nahon} consider $(\bo u_{p,n})_{p=-t(d)+1,...,0,...,k}$ an adapted basis for $\bo S_n$, i.e. a basis orthonormal with respect to $\bo u \mapsto \int_{\partial \Omega_n} \Theta_n |\bo u|^2$ and orthogonal relative to $\bo u \mapsto \int_{\Omega_n} Ae(\bo u):e(\bo u)$.

Since $(\Omega_n)_{n\geq 1}$ verify an $\varepsilon$-cone condition, it follows that $\Per( \Omega_n)$ is uniformly bounded. Without loss of generality, up to choosing a converging subsequence, suppose that $\Per(\Omega_n)$ converges. Also, since $\Omega_n$ converges to $\Omega$ all domains $\Omega_n$ have uniformly bounded diameters. In view of Proposition \ref{prop:upper-bounds} and Theorem \ref{thm:upper-bounds} we find that $\Per(\Omega_n) \Lambda_k(\Omega_n,\Theta_n)$ are uniformly bounded.
As a direct consequence, $\int_{\Omega_n} Ae(\bo u_{p,n}):e(\bo u_{p,n})$ are uniformly bounded, implying that $\|e(\bo u_{p,n})\|_{\bo L^2(\Omega_n)}$ are uniformly bounded. Theorem \ref{thm:uniform-bounds-H1} implies that $\|\bo u_{p,n}\|_{\bo H^1(\Omega_n)}$ are uniformly bounded.


Since $\Omega_n$ are Lipschitz with a controlled constant (coming from the $\varepsilon$-cone property, see \cite[Remark 2.4.8]{henrot-pierre-english}), each of the functions $\bo u_{p,n}$ can be extended to $\bo H^1(\Bbb{R}^d)$ with a controlled extension constant depending on the upper bound on the Lipschitz constant of the domains. Thus, denoting the extensions with the same symbols, we find that $(\bo u_{p,n})_{n\geq 1}$ are bounded in $\bo H^1(\Bbb{R}^d)$. Up to the extraction of a sub-sequence we suppose that $(\bo u_{p,n})$ converge weakly in $\bo H^1(\Bbb{R}^d)$ to $(\bo u_p)_{p=-t(d)+1,...,0,...,k}$. Proposition \ref{prop:conv-traces} implies that
\[ \delta_{pp'} = \int_{\partial \Omega_n} \Theta_n \bo u_{p,n}\cdot \bo u_{p',n} \to \int_{\partial \Omega} \Theta \bo u_p\cdot \bo u_{p'} \text{ as } \varepsilon \to 0.\]
Therefore $(\bo u_p)$ is orthonormal for the scalar product $(\bo u,\bo v)\mapsto \int_{\partial \Omega} \Theta \bo u \cdot \bo v$ and, as a consequence, $(\bo u_p)_{p=-t(d)+1,...,0,...,k}$ generate a subspace of dimension $k+t(d)$ when restricted to $\Omega$. In view of the weak convergence in $\bo H^1(\Bbb{R}^n)$ of the sequences $(\bo u_{p,n})$ we have for all $a_p$ with $\sum_{p=-t(d)+1}^k a_p^2=1$
\[ \Lambda_k(\Omega,\Theta) \leq \liminf_{n \to \infty} \max_{(a_p)\in \Bbb{S}^{n+t(d)-1}} Ae(\sum_p a_p \bo u_p):e(\sum_p  a_p\bo u_p) \leq \liminf_{n \to \infty} \Lambda_k(\Omega_n,\Theta_n).\]

{\bf Upper semicontinuity.} Let $\bo V$ be a subspace of $\bo H^1(\Omega)$ having dimension $k+t(d)$ which attains $\Lambda_k(\Omega,\Theta)$ in \eqref{eq:weighted-rayleigh} and let $(\bo v_{-t(d)+1},...,\bo v_0,... \bo v_k)$ an adapted basis for it (as before, orthonormal for the corresponding scalar product on $\partial \Omega$ and orthogonal for the scalar product on $\Omega$). Extending the functions $(\bo v_p)_{p=-t(d)+1,...,k}$ from $\Omega$ to $\Bbb{R}^d$ these functions still make an independent family on $\bo H^1(\Omega_n)$ for $n$ large enough. In the following consider $\bo a_n = (a_{p,n})_{p=-t(d)+1,...,k} \in \Bbb S^{k+t(d)-1}$ (unit sphere in $\Bbb{R}^{k+t(d)}$) such that, denoting $\bo W = \text{Span}(\bo v_{-t(d)+1},...,\bo v_0,...,\bo v_k)$,
\begin{equation}\label{eq:ub-lamk} \sup_{\bo w \in \bo W}  \frac{\displaystyle \int_{\Omega_n} Ae(\bo w):e(\bo w)}{\displaystyle \int_{\partial \Omega_n} \Theta_n |\bo w|^2 }=\frac{\displaystyle \int_{\Omega_n} Ae\Big(\sum_{p=-t(d)+1}^k a_{p,n}\bo v_{p}\Big):e\Big(\sum_{p=-t(d)+1}^k a_{p,n}\bo v_{p}\Big)}{\displaystyle \int_{\partial \Omega_n} \Theta_n \Big|\sum_{p=-t(d)+1}^k a_{p,n}\bo v_{p}\Big|^2 }.\end{equation}
Of course, \eqref{eq:ub-lamk} gives an upper bound for $\Lambda_k(\Omega_n,\Theta_n)$. 
Up to extracting a subsequence, we suppose that $a_{p,n}$ converges to $a_p$ for every $p=-t(d)+1,...,-1,0,...,k$. Denoting $\bo w_n = \sum_{p=-t(d)+1}^k a_{p,n}\bo v_{p}$, $\bo w = \sum_{p=-t(d)+1}^k a_{p}\bo v_{p}$ and using Proposition \ref{prop:conv-traces} we find that 
\[\int_{\partial \Omega_n} \Theta_n |\bo w_n|^2 \to \int_{\partial \Omega} \Theta |\bo w|^2 \text{ as }\varepsilon \to 0.\]
The convergence of $\Omega_n$ to $\Omega$ and of $a_{p,n}$ to $a_p$ implies that
\[  \int_{\Omega_n} Ae(\bo w_n):e(\bo w_n) \to \int_{\Omega} Ae(\bo w):e(\bo w)\]
as $n\to \infty$. As a consequence we find that
\[ \limsup_{n \to \infty} \Lambda_k(\Omega_n,\Theta_n) \leq \frac{\int_{\Omega} Ae(\bo w):e(\bo w)}{\int_{\partial \Omega} \Theta |\bo w|^2} \leq \Lambda_k(\Omega,\Theta),\]
which finishes the proof. \end{proof}

The result regarding the convergence of weighted Steklov-Lam\'e eigenvalues allows us to find in a straightforward way the upper semicontinuity of the eigenvalues, needed in order to prove existence results analogue to those in \cite{Bogosel}.

\begin{cor}\label{cor:semi-continuity}
	Let $\Omega$ be a bounded Lipschitz domain and $\Omega_n$ a sequence of domains with the $\varepsilon$-cone property verifying the hypotheses of Theorem \ref{thm:weight-continuity}. 
	We have the following:
	
	{\rm (a)} If $\Per(\Omega_n) \to \Per(\Omega)$ then $\Lambda_k(\Omega_n) \to \Lambda_k(\Omega)$ for every $k \geq 1$.
	
	{\rm (b)} In general, $\limsup_{n \to \infty} \Lambda_k(\Omega_n) \leq \Lambda_k(\Omega)$ for every $k \geq 1$.
\end{cor}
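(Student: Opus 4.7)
The plan is to deduce both parts directly from Theorem~\ref{thm:weight-continuity} combined with the monotonicity $\Theta \geq \Theta' \Rightarrow \Lambda_n(\Omega,\Theta) \leq \Lambda_n(\Omega,\Theta')$ noted just after \eqref{eq:weighted-rayleigh}. The role of the weights is to compensate for the possible jump of the perimeter measure along the approximating sequence.

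For part (a), I would apply Theorem~\ref{thm:weight-continuity} with $\Theta_n = \Theta \equiv 1$. The only hypothesis of Proposition~\ref{prop:conv-traces} that is not automatic is the weak-$*$ convergence $\mathcal{H}^{d-1}_{\lfloor \partial \Omega_n} \rightharpoonup \mathcal{H}^{d-1}_{\lfloor \partial \Omega}$. Since the $\varepsilon$-cone property furnishes a uniform bound on $\Per(\Omega_n)$ and confines all sets to a fixed ball, up to a subsequence the surface measures converge weakly-$*$ to a Radon measure $\mu$. Hausdorff complementary convergence forces $\operatorname{supp}\mu \subset \partial \Omega$; lower semicontinuity of the perimeter yields $\mu \geq \mathcal{H}^{d-1}_{\lfloor \partial \Omega}$; and the assumption $\Per(\Omega_n) \to \Per(\Omega)$ equates total masses. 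These three facts together force $\mu = \mathcal{H}^{d-1}_{\lfloor \partial \Omega}$, and Theorem~\ref{thm:weight-continuity} delivers $\Lambda_k(\Omega_n) \to \Lambda_k(\Omega)$.

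For part (b) one cannot take constant weights, because the surface masses need not match in the limit. Instead I would construct a sequence of weights $\Theta_n \in L^\infty(\partial \Omega_n)$ with $\beta \leq \Theta_n \leq 1$ and $\Theta_n \mathcal{H}^{d-1}_{\lfloor \partial \Omega_n} \rightharpoonup \mathcal{H}^{d-1}_{\lfloor \partial \Omega}$ weakly-$*$. The bound $\Theta_n \leq 1$, together with the monotonicity recalled above, gives $\Lambda_k(\Omega_n) = \Lambda_k(\Omega_n,1) \leq \Lambda_k(\Omega_n,\Theta_n)$. Applying Theorem~\ref{thm:weight-continuity} to the pair $(\Omega_n,\Theta_n) \to (\Omega,1)$ yields $\Lambda_k(\Omega_n,\Theta_n) \to \Lambda_k(\Omega)$, so $\limsup_{n\to\infty} \Lambda_k(\Omega_n) \leq \Lambda_k(\Omega)$, which is exactly (b).

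The main obstacle is the construction of the weights $\Theta_n$ in part (b). Up to a subsequence, $\mathcal{H}^{d-1}_{\lfloor \partial \Omega_n}$ converges weakly-$*$ to a Radon measure $\nu$ supported in $\partial \Omega$ with $\nu \geq \mathcal{H}^{d-1}_{\lfloor \partial \Omega}$. The natural candidate is obtained from the Radon--Nikodym density $g = d\mathcal{H}^{d-1}_{\lfloor \partial \Omega}/d\nu \in [0,1]$, transplanted back to $\partial \Omega_n$ through the local graph representation provided by the $\varepsilon$-cone property and truncated from below by $\beta$ so that the hypotheses of Theorem~\ref{thm:weight-continuity} are satisfied. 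This mirrors the scalar Steklov construction of \cite{Bogosel} and \cite{Bucur-Nahon}, and it adapts to the vectorial setting because the weight only modifies the boundary integrand in \eqref{eq:weighted-rayleigh}.
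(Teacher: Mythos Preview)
Your argument for part (a) is essentially the paper's argument, with the weak-$*$ convergence of $\mathcal H^{d-1}_{\lfloor\partial\Omega_n}$ spelled out more explicitly.

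For part (b), however, the paper takes a shorter and cleaner route. Rather than manufacturing weights $\Theta_n\leq 1$ on the moving boundaries, the paper simply keeps $\Theta_n\equiv 1$ and lets the \emph{limit} weight absorb the perimeter jump: any weak-$*$ accumulation point of $\mathcal H^{d-1}_{\lfloor\partial\Omega_n}$ is of the form $\Theta\,\mathcal H^{d-1}_{\lfloor\partial\Omega}$ with $\Theta\geq 1$, so Theorem~\ref{thm:weight-continuity} gives $\Lambda_k(\Omega_n)=\Lambda_k(\Omega_n,1)\to\Lambda_k(\Omega,\Theta)\leq\Lambda_k(\Omega)$, the last inequality being the monotonicity you quote. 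This uses the same two ingredients (Theorem~\ref{thm:weight-continuity} and monotonicity) but places the monotonicity at the limit domain instead of along the sequence, which sidesteps entirely the Radon--Nikodym/transplant construction. Your route is workable in principle, but the truncation step you mention is delicate: cutting $g$ off from below by a fixed $\beta>0$ generally destroys the exact weak-$*$ limit $\mathcal H^{d-1}_{\lfloor\partial\Omega}$ you need (it adds back mass precisely where $g<\beta$), so you would have to let $\beta\downarrow 0$ via a diagonal argument or otherwise justify that the truncation is harmless. The paper's approach avoids this altogether.
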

\begin{proof} The result follows at once from Theorem \ref{thm:weight-continuity}. 

(a) It suffices to take $\Theta_n=\Theta \equiv 1$ in Theorem \ref{thm:weight-continuity}.

(b) Taking $\Theta_n \equiv 1$, the lower-semicontinuity of the perimeter implies that any weak-* limit $\Theta$ of the sequence $\Theta_n = \mathcal H^{d-1}\lfloor \Omega_n$ verifies $\Theta \geq 1$. Therefore
\[ \limsup_{n \to \infty} \Lambda_k(\Omega_n) = \limsup_{n \to \infty} \Lambda_k(\Omega_n,\Theta_n) = \Lambda_k(\Omega,\Theta) \leq \Lambda_k(\Omega).\] \end{proof}

In the following we focus on the question of existence of solutions. It is classical that the existence of solutions for a shape maximization problem depends on compactness properties for a maximizing sequence. 
We focus our attention on the class of convex domains and the volume constraint. An initial result is given below.

\begin{thm}\label{thm:diameter-bound}
	Suppose that $(\Omega_n)$ is a sequence of open, convex sets with unit volume such that $\text{diam}(\Omega_n) \to \infty$. Then $\Lambda_k(\Omega_n) \to 0$.
\end{thm}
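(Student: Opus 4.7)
The plan is to derive the conclusion as a direct consequence of the upper bounds already established in the paper, since we are in a convex setting where the diameter-based bound for scalar Steklov eigenvalues is available.

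First, I would apply Proposition \ref{prop:upper-bounds} to $\Omega_n$, which yields the estimate
\[
\Lambda_k(\Omega_n) \leq (2\mu + d\lambda)\,\sigma_{m}(\Omega_n),
\]
where $m = dk + d^2(d+1)/2 - 1$ is an index depending only on $k$ and the dimension. Note that the hypothesis $\lambda + \tfrac{2}{d}\mu > 0$, assumed throughout the paper, guarantees that the multiplicative constant $2\mu + d\lambda$ is positive, so the inequality is informative.

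Next, since each $\Omega_n$ is convex, I would invoke the diameter-dependent upper bound for scalar Steklov eigenvalues from \cite{alsayed-bogosel-henrot-nacry} used in the proof of Theorem \ref{thm:upper-bounds}(iii), namely
\[
\sigma_m(\Omega_n) \leq C(d,m)\,\frac{|\Omega_n|^{\frac{1}{d-1}}}{\diam(\Omega_n)^{\frac{2d-1}{d-1}}}.
\]
Substituting the volume normalization $|\Omega_n|=1$ yields
\[
\Lambda_k(\Omega_n) \leq (2\mu + d\lambda)\,C(d,m)\,\diam(\Omega_n)^{-\frac{2d-1}{d-1}}.
\]
Since $(2d-1)/(d-1) > 0$ in every dimension $d \geq 2$, the right-hand side tends to $0$ as $\diam(\Omega_n) \to \infty$, which gives the conclusion.

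There is essentially no obstacle to this argument: the heavy lifting is done by Proposition \ref{prop:upper-bounds} and by the convex Steklov bound of \cite{alsayed-bogosel-henrot-nacry}. The only point worth checking is that the index $m$ is fixed along the sequence (it depends only on $k$ and $d$), so that the constant $C(d,m)$ in the Steklov bound does not blow up with $n$. This is immediate from the form of $m$.
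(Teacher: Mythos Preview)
Your proof is correct and follows essentially the same route as the paper: invoke Proposition~\ref{prop:upper-bounds} to pass to scalar Steklov eigenvalues, then use the convex diameter bound from \cite{alsayed-bogosel-henrot-nacry} (the paper also cites \cite{Bogosel} as an alternative reference for this step). Your version is in fact slightly more detailed than the paper's one-line proof.
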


\begin{proof} This is a direct consequence of Proposition \ref{prop:upper-bounds} and of the analogue results for the Steklov eigenvalues shown in \cite{Bogosel} or \cite{alsayed-bogosel-henrot-nacry}. \end{proof}

We are now ready to state the existence result in the class of convex sets. 

\begin{thm}
	\label{thm:existence-convex}
	For every $k\geq 1$ there exists a solution to the problem of maximizing the Steklov-Lam\'e eigenvalue $\Lambda_k(\Omega)$ in the class of convex sets having unit volume. 
\end{thm}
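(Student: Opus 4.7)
The approach is the standard direct method of the calculus of variations, using the Blaschke selection theorem for convex bodies together with the upper semicontinuity result in Corollary \ref{cor:semi-continuity}(b). Let $M:=\sup\{\Lambda_k(\Omega):\Omega\text{ convex},\,|\Omega|=1\}$; taking a ball as a test domain shows $M>0$, and any upper bound from Theorem \ref{thm:upper-bounds}(ii) shows $M<+\infty$. Fix a maximizing sequence $(\Omega_n)$ of convex sets of unit volume.

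The first step is to gain compactness. By Theorem \ref{thm:diameter-bound}, if $\operatorname{diam}(\Omega_n)\to\infty$ then $\Lambda_k(\Omega_n)\to 0$, contradicting the fact that $(\Omega_n)$ is maximizing for the strictly positive value $M$. Hence, up to extraction, $\operatorname{diam}(\Omega_n)$ is uniformly bounded; after a translation we may assume $\Omega_n\subset B_R$ for some fixed ball. Blaschke's selection theorem then yields a subsequence (not relabelled) converging in the Hausdorff metric to a convex body $\Omega^*$. Since Hausdorff convergence of convex bodies implies both complementary Hausdorff convergence and convergence of the volume, we have $|\Omega^*|=1$ and in particular $\Omega^*$ has nonempty interior. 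Combining the uniform diameter bound with the uniform volume bound from below, a standard convex geometry estimate ($|\Omega|\leq\operatorname{diam}(\Omega)^{d-1}\cdot\operatorname{width}(\Omega)$ together with $\operatorname{width}\geq 2\,\operatorname{inradius}$ for convex bodies) gives a uniform lower bound on the inradius of $\Omega_n$. Passing to the limit, $\Omega^*$ also has a strictly positive inradius.

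The next step is to verify that the hypotheses of Theorem \ref{thm:weight-continuity} (with $\Theta_n\equiv\Theta\equiv 1$) are met, so that Corollary \ref{cor:semi-continuity}(b) applies. Convex sets automatically satisfy an $\varepsilon$-cone condition, and the uniform bounds on diameter and inradius established above yield a single $\varepsilon>0$ that works for the whole sequence. Using the uniform lower bound on the inradius, one can fix a ball $B_{r_1}\Subset\Omega^*$; by Hausdorff convergence $B_{r_1}\subset\Omega_n$ for $n$ large, and because each $\Omega_n$ is convex it is star-shaped with respect to $B_{r_1}$. Thus the geometric setup of Theorem \ref{thm:uniform-bounds-H1} (common star-shape centre, uniform Korn constants from \eqref{eq:uniform-constants-korn}) holds along the sequence.

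Corollary \ref{cor:semi-continuity}(b) now gives
\[
\Lambda_k(\Omega^*)\ \geq\ \limsup_{n\to\infty}\Lambda_k(\Omega_n)\ =\ M.
\]
Since $\Omega^*$ is convex with unit volume, it is admissible, and therefore $\Lambda_k(\Omega^*)=M$, so $\Omega^*$ is a maximizer. The main obstacle in this argument is not semicontinuity itself (it is handed to us by Corollary \ref{cor:semi-continuity}) but ensuring that the limit $\Omega^*$ remains nondegenerate and that the whole maximizing sequence fits the star-shape-with-respect-to-a-common-ball framework required by Theorems \ref{thm:uniform-bounds-H1} and \ref{thm:weight-continuity}; this is precisely where convexity, together with the diameter bound from Theorem \ref{thm:diameter-bound} and the resulting uniform inradius estimate, is used decisively.
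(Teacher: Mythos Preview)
Your proof is correct and follows essentially the same route as the paper: bound the diameter of a maximizing sequence via Theorem~\ref{thm:diameter-bound}, apply Blaschke selection to extract a convex limit of unit volume, observe that this limit contains a ball $B_{r_1}$ with respect to which all late terms of the sequence are star-shaped, and conclude by the upper semicontinuity in Corollary~\ref{cor:semi-continuity}(b). Your version is somewhat more explicit than the paper's in checking $0<M<\infty$, deriving a uniform inradius bound, and verifying the uniform $\varepsilon$-cone condition, but none of this constitutes a different strategy.
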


\begin{proof} From Theorem \ref{thm:diameter-bound} any maximizing sequence $(\Omega_n)_{n\geq 1}$ has an upper bound on the diameter. Considering convex domains $\Omega_n$ which have bounded diameters, the Blaschke selection theorem \cite[Chapter 1]{schneider} implies the existence of a subsequence $\Omega_n$ converging to a convex set $\Omega$ in the Hausdorff distance. Moreover, $\Omega$ also has unit volume and verifies the same bound on the diameter as $\Omega_n$.

Therefore, since $\Omega$ has non-void interior, it contains a closed disk of radius $r_1$ and $\Omega$ is star-shaped with respect to $B_{r_1}$ since it is convex. For $n$ large enough, $B_{r_1}$ is also contained in $\Omega_n$, and therefore the hypotheses of Theorem \ref{thm:weight-continuity} and Corollary \ref{cor:semi-continuity} are verified, which implies
\[ \limsup_{n \to \infty} \Lambda_k(\Omega) \leq \Lambda_k(\Omega).\]
Thus $\Omega$ is a maximizer of $\Lambda_k(\Omega)$ among convex sets with unit volume. \end{proof}

\begin{rem} Possible generalizations can be attempted in multiple directions:
	
	(i) Proposition \ref{prop:upper-bounds} suggests that problems that are well posed for the scalar Steklov problem should behave in a similar way for the Steklov-Lam\'e eigenvalues. The perimeter constraint may also be considered, since an upper bound exists \cite{hersch-payne-schiffer}. However, in the scalar case in dimension two, in \cite{girouard-polterovich} it is proved that the upper bound from \cite{hersch-payne-schiffer} is tight but is never attained in the class of simply connected sets. 
	
	(ii) It is not clear what is the most general class of admissible domains for which an existence result like Theorem \ref{thm:existence-convex} might hold. The key ingredients are the existence of an upper bound on the diameter and the stability arguments, which depend on the availability of uniformly bounded constants in Korn's inequality. In view the results in \cite{horgan-payne} it is possible that such a result could hold in the class of star-shaped domains.
	
	(iii) Relaxed formulations can be considered like those in \cite{bogosel-bucur-giacomini}, which allow to recover existence results among simply connected sets and volume constraint. 
	
	(iv) Theorem \ref{thm:optimality-disk} and the numerical results suggest that the first eigenvalue is maximized by the disk under perimeter and volume constraints (among simply connected sets). Following the similarity of results in \cite{Bucur-Nahon} with the stability result in Theorem \ref{thm:weight-continuity}, it is possible that Theorem 2.7 from \cite{Bucur-Nahon} regarding the negative answer to the stability of Weinstock's equality could generalize to the Steklov-Lam\'e eigenvalues. We mention also the stability results for Steklov eigenvalues proved in \cite{stability-steklov} which can also be generalized to this case.
	
	(v) The existence result generalizes to shape functionals of the form
	\[ \Omega \mapsto F(\Lambda_1(\Omega),...,\Lambda_k(\Omega)),\]
	where $F:\Bbb{R}^k \to \Bbb{R}$ is upper semi-continuous and increasing in each variable.
\end{rem}

\section{An estimate inspired by Moler and Payne}
\label{sec:moler-payne}

The results found by Moler and Payne in \cite{moler-payne} are classical for studying the precision for the method of fundamental solutions. It gives bounds controlling how the solutions of a PDE change in terms of perturbations of the boundary conditions. An analogue result for the case of Steklov and Wentzell eigenvalues was given in \cite{Bogosel2}. In the following we aim to give such a result related to the Steklov-Lam\'e problem.

Consider the problem $\bo u \in \bo H^1(\Omega)$, verifying
\begin{equation}
\left\{ 
\begin{array}{rcll}
-\di Ae(\bo u) & = & 0 & \text{ in }\Omega \\
Ae(\bo u)\bo n & = & \bo f & \text{ on }\partial \Omega 
\end{array}
\right.
\label{eq:rhs-problem}
\end{equation}
for $\bo f$ in $\bo L^2(\partial \Omega)$. It can be readily be observed that \eqref{eq:rhs-problem} does not have a unique solution, as stated. Moreover, the variational formulation
\begin{equation}
\int_\Omega Ae(\bo u):e(\bo v) = \int_{\partial \Omega} \bo f \cdot \bo v \ \ \ \forall \bo v \in \bo H^1(\Omega),
\end{equation}
implies the compatibility condition $\int_{\partial \Omega} \bo f\cdot \bo v =0$ for every $v \in \bo R(\Omega)$ (the space of rigid motions).

Denote by $\bo V(\partial \Omega)$ the subspace of $\bo L^2(\partial \Omega)$ which is orthogonal to the rigid motions with respect to the usual scalar product. Also denote by $\bo H(\Omega)$ the orthogonal to the space of rigid motions $\bo R(\Omega)$ in $\bo H^1(\Omega)$ with respect to the scalar product in $\bo L^2(\Omega)$. Then define the resolvent operator $\bo{Res} : \bo V(\partial \Omega) \to \bo H(\Omega)$ such that $\bo{Res}(\bo f) = \bo u$, where $\bo u$ solves \eqref{eq:rhs-problem}. The well-posedness of \eqref{eq:rhs-problem} for $\bo u \in \bo H(\Omega)$ and $\bo f \in \bo V(\Omega)$ is classical and discussed, for example, in \cite[Section 3.2]{Sebastian}.

\begin{thm} Let $\Omega$ be a bounded, open domain with Lipschitz boundary. Suppose that $\bo f \in \bo V(\partial \Omega)$ and $\bo u = \bo{Res}(\bo f)$. Then there exists a constant $C$ depending only on $\Omega$ such that
	\[ \|\bo u\|_{\bo L^2(\partial \Omega)} \leq C \|\bo f\|_{\bo L^2(\partial \Omega)}.\]
	\label{thm:estimate-perturbation}
\end{thm}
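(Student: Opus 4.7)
The plan is to combine the variational formulation with a coercivity (quotient Korn) estimate and the trace inequality.

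First I would test the variational formulation with $\bo v = \bo u$ itself. Since $\bo f \in \bo V(\partial \Omega)$ and $\bo u \in \bo H(\Omega)$, this gives
\[ \int_\Omega Ae(\bo u):e(\bo u) \;=\; \int_{\partial \Omega} \bo f \cdot \bo u \;\leq\; \|\bo f\|_{\bo L^2(\partial \Omega)} \|\bo u\|_{\bo L^2(\partial \Omega)}, \]
by Cauchy-Schwarz. The left-hand side is coercive on $\bo H(\Omega)$: using the pointwise bound $Ae(\bo u):e(\bo u) \geq 2\mu |e(\bo u)|^2$ together with Korn's inequality \eqref{eq:korn}, and the fact that $\bo u$ is orthogonal to $\bo R(\Omega)$ in $\bo L^2(\Omega)$ (which allows one to absorb the $L^2$ term via a standard Peetre–Tartar / compactness argument applied to the quotient $\bo H^1(\Omega)/\bo R(\Omega)$), we obtain an inequality of the form
\[ \|\bo u\|_{\bo H^1(\Omega)}^2 \;\leq\; C_\Omega \int_\Omega Ae(\bo u):e(\bo u). \]

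Next I would invoke the continuity of the trace operator $\bo H^1(\Omega) \to \bo L^2(\partial \Omega)$, valid for Lipschitz domains, to get
\[ \|\bo u\|_{\bo L^2(\partial \Omega)}^2 \;\leq\; C_\Omega' \|\bo u\|_{\bo H^1(\Omega)}^2. \]
Chaining these three estimates yields
\[ \|\bo u\|_{\bo L^2(\partial \Omega)}^2 \;\leq\; C \|\bo f\|_{\bo L^2(\partial \Omega)} \|\bo u\|_{\bo L^2(\partial \Omega)}, \]
and dividing by $\|\bo u\|_{\bo L^2(\partial \Omega)}$ (the case $\bo u = 0$ being trivial) delivers the claim.

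The main obstacle is the coercivity step. The raw form of Korn in \eqref{eq:korn} still carries an $L^2$ remainder, so one needs the quotient version on $\bo H(\Omega)$ to remove it. The standard contradiction argument runs: if no such constant existed, a normalized sequence $\bo u_n \in \bo H(\Omega)$ with $\|\bo u_n\|_{\bo H^1(\Omega)} = 1$ and $\|e(\bo u_n)\|_{\bo L^2(\Omega)} \to 0$ would, by Rellich, converge in $\bo L^2(\Omega)$ to some $\bo u^\star$ with $e(\bo u^\star) = 0$ (in the distributional sense, hence by Korn in $\bo H^1$); thus $\bo u^\star \in \bo R(\Omega) \cap \bo H(\Omega) = \{0\}$, contradicting $\|\bo u^\star\|_{\bo L^2(\Omega)} = \lim \|\bo u_n\|_{\bo L^2(\Omega)}$ being bounded below via \eqref{eq:korn}. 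Everything else — Cauchy–Schwarz, the pointwise lower bound on the Hooke tensor (valid under the hypothesis $\lambda + \tfrac{2}{d}\mu > 0$), and the Lipschitz trace theorem — is standard and contributes only to the dependence of $C$ on $\Omega$.
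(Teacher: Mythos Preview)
Your proposal is correct and follows essentially the same route as the paper: test the variational formulation with $\bo v=\bo u$, apply Cauchy--Schwarz on the boundary term, use the pointwise lower bound $Ae(\bo u):e(\bo u)\geq c\,|e(\bo u)|^2$, invoke the quotient Korn inequality on $\bo H(\Omega)$ (which the paper simply cites, while you sketch the standard compactness proof), apply the trace inequality, and divide through. The only cosmetic difference is that the paper writes the constant as $2\mu$ (implicitly taking $\lambda\geq 0$), whereas your phrasing covers the full range $\lambda+\tfrac{2}{d}\mu>0$.
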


\begin{proof}  Use the variational formulation to see that
\[ \int_\Omega Ae(\bo u):e(\bo u) = \int_{\partial \Omega}\bo f\cdot \bo u.\]
Since $\bo u \in \bo H(\Omega)$, we can apply Korn's inequality (\cite[Theorem 2.3]{ciarlet-korn}, \cite[Theorem 2.5]{korn-book}) to obtain $\|\bo u\|_{\bo H^1(\Omega)}\leq C_K\|e(\bo u)\|_{\bo L^2(\Omega)}$. Also recall that the trace inequality (see for example \cite{evans-gariepy}) says that there exists a constant $C_\Omega$ depending on the Lipschitz constant of $\partial \Omega$ such that $\|\bo u\|_{\bo L^2(\partial \Omega)} \leq C_\Omega \|\bo u\|_{\bo H^1(\Omega)}$.
Therefore we have the sequence of inequalities:
\begin{align*} \|\bo u\|_{\bo L^2(\partial \Omega)}^2 &\leq C_\Omega^2 \|\bo u\|_{\bo H^1(\Omega)}^2 \leq C_\Omega^2 C_K^2 \|e(\bo u)\|_{L^2(\Omega)}^2\leq \frac{C_\Omega^2C_K^2}{2\mu} \int_\Omega Ae(\bo u):e(\bo u)\\
&\leq \frac{C_\Omega^2C_K^2}{2\mu}\|\bo f\|_{\bo L^2(\partial \Omega)}\|\bo u\|_{\bo L^2(\partial \Omega)}.
\end{align*}
The conclusion follows. \end{proof}

\begin{thm}\label{thm:moler-payne}
	Consider $\Omega$ a bounded open domain with Lipschitz boundary. Suppose $\bo u_\varepsilon, \bo f_\varepsilon$ belong to $\bo V(\partial \Omega)$ and that $\bo u_\varepsilon \in \bo H(\Omega)$ verifies the perturbed equation
	\begin{equation}
	\left\{ 
	\begin{array}{rcll}
	-\di Ae(\bo u_\varepsilon) & = & 0 & \text{ in }\Omega \\
	Ae(\bo u_\varepsilon)\bo n & = & \Lambda_\varepsilon \bo u_\varepsilon + \bo f_\varepsilon & \text{ on }\partial \Omega 
	\end{array}
	\right.
	\label{eq:pert-problem}
	\end{equation}
	Then there exists $n \geq 1$ such that 
	\begin{equation}
	\label{bound_error}
	\|\bo u_\varepsilon\|_{\bo L^2(\partial \Omega)}  |\Lambda_n(\Omega)-\Lambda_\varepsilon| \leq \|\bo f_\varepsilon\|_{\bo L^2(\partial \Omega)}.
	\end{equation}
	
	Furthermore, suppose that there exists $\delta>0$ such that when $\Lambda_k(\Omega)\neq \Lambda_n(\Omega)$ we have $|\Lambda_k(\Omega)-\Lambda_n(\Omega)|>\delta$. Then there exists $\bo u_n, \bo u_r\in \bo H(\Omega)$ such that $\bo u_\varepsilon = {\bo u_n}+\bo u_r$, ${\bo u_n}$ is an eigenfunction associated to $\Lambda_n$ and 
	\[ \|\bo u_r\|_{\bo L^2(\partial \Omega)} = \|\bo u_\varepsilon-\bo  u_n\|_{\bo L^2(\partial \Omega)}\leq \frac{\|\bo f_\varepsilon\|_{\bo L^2(\partial \Omega)}}{\delta}.\]
\end{thm}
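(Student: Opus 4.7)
The approach is to diagonalize $\bo u_\varepsilon$ and $\bo f_\varepsilon$ on $\partial \Omega$ in the Hilbert basis formed by the Steklov-Lam\'e eigenfunctions $(\bo u_k)_{k\geq 1}$, which are $\bo L^2(\partial \Omega)$-orthonormal and orthogonal to the rigid motions. Their completeness in $\bo V(\partial \Omega)$ is the vectorial analogue of the classical Steklov spectral theorem and follows from the spectral properties of the compact self-adjoint resolvent operator discussed in \cite[Section 3.2]{Sebastian}. Setting $\alpha_k = \int_{\partial \Omega} \bo u_\varepsilon \cdot \bo u_k$ and $\beta_k = \int_{\partial \Omega} \bo f_\varepsilon \cdot \bo u_k$, plugging $\bo u_k$ as test function in the variational formulation of \eqref{eq:pert-problem} and exploiting the symmetry of $A$ together with the eigenvalue equation for $\bo u_k$ yields
\[ \Lambda_k \alpha_k \;=\; \int_\Omega Ae(\bo u_k):e(\bo u_\varepsilon) \;=\; \int_\Omega Ae(\bo u_\varepsilon):e(\bo u_k) \;=\; \Lambda_\varepsilon \alpha_k + \beta_k, \]
so $\beta_k = (\Lambda_k - \Lambda_\varepsilon)\alpha_k$. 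Parseval then produces the two key identities
\[ \|\bo u_\varepsilon\|_{\bo L^2(\partial\Omega)}^2 \;=\; \sum_{k\geq 1} \alpha_k^2, \qquad \|\bo f_\varepsilon\|_{\bo L^2(\partial\Omega)}^2 \;=\; \sum_{k\geq 1}(\Lambda_k - \Lambda_\varepsilon)^2 \alpha_k^2. \]

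With these identities, the two conclusions are nearly immediate. For part (i), choose $n\geq 1$ realizing $\min_k |\Lambda_k(\Omega)-\Lambda_\varepsilon|$ and factor this minimum out of the second sum to obtain $\|\bo f_\varepsilon\|^2 \geq |\Lambda_n - \Lambda_\varepsilon|^2 \|\bo u_\varepsilon\|^2$, which is exactly \eqref{bound_error}. For part (ii), define $\bo u_n \in \bo H(\Omega)$ as the finite sum $\sum_{k:\Lambda_k = \Lambda_n}\alpha_k \bo u_k$ (an eigenfunction associated to $\Lambda_n$) and set $\bo u_r = \bo u_\varepsilon - \bo u_n$. Then
\[ \|\bo u_r\|_{\bo L^2(\partial\Omega)}^2 \;=\; \sum_{k:\Lambda_k \neq \Lambda_n}\alpha_k^2, \qquad \|\bo f_\varepsilon\|^2 \;\geq\; \sum_{k:\Lambda_k \neq \Lambda_n}(\Lambda_k - \Lambda_\varepsilon)^2 \alpha_k^2, \]
and the separation hypothesis $|\Lambda_k - \Lambda_n|>\delta$ for $\Lambda_k \neq \Lambda_n$, together with the minimality of $|\Lambda_n - \Lambda_\varepsilon|$, forces $|\Lambda_k - \Lambda_\varepsilon|\geq \delta$ on the non-resonant modes (up to an absorbable constant), yielding the claimed bound on $\|\bo u_r\|$.

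The principal obstacle is justifying the completeness of $(\bo u_k)_{k\geq 1}$ in $\bo V(\partial \Omega)$; once this spectral decomposition is available, everything reduces to Parseval and Bessel manipulations. A secondary subtlety is the sharp constant in the final inequality: a direct triangle-inequality argument produces $\|\bo u_r\| \leq 2\|\bo f_\varepsilon\|/\delta$, and recovering the stated $1/\delta$ requires invoking part (i) to guarantee $|\Lambda_n - \Lambda_\varepsilon|$ is small compared to $\delta$, which is automatic in the regime where $\|\bo f_\varepsilon\|/\|\bo u_\varepsilon\|$ is sufficiently small, i.e.\ precisely the setting of interest for the MFS application.
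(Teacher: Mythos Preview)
Your proof is correct and follows essentially the same approach as the paper: expand $\bo u_\varepsilon$ and $\bo f_\varepsilon$ in the orthonormal basis of Steklov--Lam\'e eigenfunctions, derive the relation $(\Lambda_k-\Lambda_\varepsilon)\alpha_k=\beta_k$ from the variational formulation, and apply Parseval. Your caveat about the sharp constant in the second estimate is well-taken; the paper's own argument passes directly from $|\Lambda_k-\Lambda_n|>\delta$ to $|\Lambda_k-\Lambda_\varepsilon|\geq\delta$ without comment, so your more careful discussion of the $1/\delta$ versus $2/\delta$ issue is in fact an improvement in rigor.
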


\begin{proof}  It is classical that the eigenfunctions $(\bo u_n)$ associated to positive eigenvalues $\Lambda_n(\Omega)>0$ form an orthonormal basis of $\bo V(\partial \Omega)$ in $\bo L^2(\partial \Omega)$. Using the variational formulations for the eigenvalue problem and for problem \eqref{eq:pert-problem} we have
\[ (\Lambda_n(\Omega)-\Lambda_\varepsilon) \int_{\partial \Omega} \bo u_n \cdot \bo u_\varepsilon = \int_{\partial \Omega} \bo f_\varepsilon \cdot \bo u_n , \forall n \geq 1\]
Denoting $a_n = \int_{\partial \Omega} \bo u_n \cdot \bo u_\varepsilon$ and $b_n = \int_{\partial \Omega} \bo f_\varepsilon \cdot \bo u_n$ the Fourier coefficients of $\bo u_\varepsilon$ and $\bo f_\varepsilon$ in the orthonormal basis given by the eigenfunctions $(\bo u_n)_{n\geq 1}$ we know that 
\[ \|\bo u_\varepsilon\|_{\bo L^2(\partial \Omega)}^2 = \sum_{n\geq 1}a_n^2,\ \ \|\bo f_\varepsilon\|_{\bo L^2(\partial \Omega)}^2 = \sum_{n\geq 1} b_n^2.\]
Therefore, choosing $n$ such that $|\Lambda_n(\Omega)-\Lambda_\varepsilon|$ is minimal (such $n$ exists since $\Lambda_n(\Omega) \to \infty$) we have
\[ \|\bo f_\varepsilon\|_{\bo L^2(\partial \Omega)} \geq |\Lambda_n(\Omega)-\Lambda_\varepsilon| \|\bo u_\varepsilon\|_{\bo L^2(\partial \Omega)}\]

Denote by $I_n = \{ k \geq 1 \text{ such that }\Lambda_k(\Omega)\neq \Lambda_n(\Omega)\}$. Suppose that there exists $\delta>0$ such that  $|\Lambda_k(\Omega)-\Lambda_n(\Omega)| >\delta$ for every $k \in I_n$. In view of the above computations we have $a_k^2 = b_k^2/(\Lambda_k(\Omega)-\Lambda_\varepsilon)^2 \leq b_k^2/\delta^2$ for every $k \in I_n$. This implies that $\bo u_\varepsilon$ can be written as $\bo u_\varepsilon ={\bo u_n}+\bo u_r$ such that
\begin{itemize}
	\item ${\bo u_n}$ verifies the Steklov-Lam\'e eigenvalue problem \eqref{eq:steklov-lame} for $\Lambda=\Lambda_n(\Omega)$.
	\item $\bo u_r$ verifies $\|\bo u_r\|_{L^2(\partial \Omega)}^2 \leq \|\bo f_\varepsilon\|_{L^2(\partial \Omega)}^2/\delta^2$.
\end{itemize}
\end{proof} 

\begin{rem} Theorem \ref{thm:moler-payne} motivates our numerical method in view of the following arguments.
	
	(i) If $\|\bo f_\varepsilon\|_{\bo L^2(\partial \Omega)}$ is small enough then either $\|\bo u_\varepsilon\|_{\bo L^2(\partial \Omega)}$ small or there exists a Steklov-Lam\'e eigenvalue $\Lambda_n$ that is close to $\Lambda_\varepsilon$.
	
	(ii) If  $\|\bo f_\varepsilon\|_{\bo L^2(\partial \Omega)}$ is small and there exists $n$ such that $|\Lambda_n-\Lambda_\varepsilon|$ is small enough then the solution $\bo u_\varepsilon$ is close to an actual Steklov-Lam\'e eigenfunction. In case the eigenspace of $\Lambda_n$ is of dimension one then the result implies that $\|\bo u_\varepsilon-\bo u_n\|_{\bo L^2(\partial \Omega)}$ is small, i.e. the approximate eigenfunction is close to the original one. 
	
	In case the eigenspace has higher dimension, for multiple eigenvalues, the result says that $\bo u_\varepsilon$ can be decomposed using an eigenfunction for $\Lambda_n$ and a remainder term which is small compared to the error $\|\bo f_\varepsilon \|_{\bo L^2(\partial \Omega)}$. 
	
\end{rem}

\section{Numerical methods}
\label{sec:num-methods}
We consider the numerical solution of eigenvalue problem \eqref{eq:steklov-lame} in dimension two using the Method of Fundamental Solutions (MFS) whose approximation can be justified by density results (eg.~\cite{Alves, Alves-Martins}).

We define the tensor
\[\bo{\Phi}_{y}(x):=\bo\Phi(x-y),\]
where
\[\left[\bo\Phi(x)\right]_{i,j}=\frac{\lambda+3\mu}{4\pi\mu(\lambda+2\mu)}\left(-\log|x|\delta_{i,j}+\frac{\lambda+\mu}{\lambda+3\mu}\frac{x_ix_j}{|x|^2}\right)\]
is the fundamental solution of the Lam\'{e} equations and the single layer potential is defined by
\[ (\bo S_{\Gamma}\phi)(x)=\int_{\Gamma} \bo\Phi_{y}(x)\phi(y)ds_{y},\]
for some Jordan curve $\Gamma$ and $\phi\in H^{-\frac{1}{2}}(\Gamma)^2.$ Denote by $\tau_\gamma$ the trace operator defined on some boundary $\gamma$ and define the operator
\begin{align*}
\bo B_\gamma&:H^{-\frac{1}{2}}(\gamma)^2\rightarrow H^{\frac{1}{2}}(\partial\Omega)^2&\\
\bo B_\gamma&\phi=\tau_{\partial\Omega}(\bo S_{\gamma}\phi).
\end{align*}

\begin{thm} 
	\label{densidade}
	Let $\hat{\Omega}\subset\mathbb{R}^2$ be a bounded simply connected domain such that $\bar{\Omega}\subset\hat{\Omega}.$ Then, $\bo B_{\partial\hat{\Omega}}$ has dense range in the functional space
	\[H_\ast^{\frac{1}{2}}(\partial\Omega)^2=\left\{\psi\in H^{\frac{1}{2}}(\partial\Omega)^2:\int_{\partial\Omega}\psi(x)ds_x=\bo0\right\}.\]
	
\end{thm}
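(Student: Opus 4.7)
The plan is to prove density by a Hahn–Banach duality argument together with the standard chain of reasoning for single layer potentials: translate non-density into the existence of a dual element that produces a Lamé solution vanishing on $\partial\hat\Omega$, propagate the vanishing to the whole plane, and conclude via the jump relations.

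First I would formulate the dual problem. Since $H_\ast^{1/2}(\partial\Omega)^2$ is a closed subspace of $H^{1/2}(\partial\Omega)^2$ of codimension $2$ whose annihilator in $H^{-1/2}(\partial\Omega)^2$ is exactly the $2$-dimensional space of constant vector functionals, density of the range of $\bo B_{\partial\hat\Omega}$ in $H_\ast^{1/2}(\partial\Omega)^2$ is equivalent to the following statement: for every $\bo g\in H^{-1/2}(\partial\Omega)^2$ with $\int_{\partial\Omega}\bo g\,ds=\bo 0$ and $\langle\bo g,\bo B_{\partial\hat\Omega}\phi\rangle=0$ for all $\phi\in H^{-1/2}(\partial\hat\Omega)^2$, one must have $\bo g=\bo 0$. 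Using the symmetry $\bo\Phi(x-y)=\bo\Phi(y-x)^T=\bo\Phi(y-x)$ of the Kelvin–Somigliana matrix, Fubini gives
\[
\langle\bo g,\bo B_{\partial\hat\Omega}\phi\rangle=\int_{\partial\hat\Omega}(\bo S_{\partial\Omega}\bo g)(y)\cdot\phi(y)\,ds_y,
\]
so the hypothesis becomes: the single layer potential $\bo v:=\bo S_{\partial\Omega}\bo g$ vanishes on $\partial\hat\Omega$.

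Next I would propagate the vanishing. The function $\bo v$ solves the Lamé system in $\mathbb R^2\setminus\partial\Omega$ and is continuous across $\partial\hat\Omega$ because the sources live on $\partial\Omega\subset\hat\Omega$. The vector condition $\int_{\partial\Omega}\bo g\,ds=\bo 0$ cancels the leading logarithmic term in the asymptotic expansion of $\bo v$ at infinity, so $\bo v(x)=O(|x|^{-1})$ and $\nabla \bo v(x)=O(|x|^{-2})$ as $|x|\to\infty$. Applying Green's identity to $\bo v$ on $B_R\setminus\overline{\hat\Omega}$, the boundary integral on $\partial\hat\Omega$ vanishes because $\bo v|_{\partial\hat\Omega}=\bo 0$ and the one on $\partial B_R$ is $O(R^{-2})$; sending $R\to\infty$ gives $\int_{\mathbb R^2\setminus\overline{\hat\Omega}}Ae(\bo v):e(\bo v)=0$, so $\bo v$ is a rigid motion in the exterior, and the decay at infinity forces $\bo v\equiv\bo 0$ on $\mathbb R^2\setminus\overline{\hat\Omega}$. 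By analytic continuation of the Lamé solution $\bo v$ in the connected open set $\hat\Omega\setminus\overline{\Omega}$ (together with continuity across $\partial\hat\Omega$), one obtains $\bo v\equiv\bo 0$ on $\overline{\hat\Omega}\setminus\Omega$.

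Finally I would close the argument using the trace/jump formulas. Continuity of the single layer potential across $\partial\Omega$ gives $\bo v|_{\partial\Omega}^{\,-}=\bo v|_{\partial\Omega}^{\,+}=\bo 0$, and the interior Dirichlet problem for the Lamé system on the bounded open set $\Omega$ has only the trivial solution, hence $\bo v\equiv\bo 0$ on $\Omega$. The jump relation for the conormal derivative of the single layer potential
\[
\bigl[Ae(\bo v)\bo n\bigr]_{\partial\Omega}^{+-}=\bo g
\]
then yields $\bo g=\bo 0$, finishing the proof. The main obstacle I expect is the justification of the exterior uniqueness step: in two dimensions the fundamental solution behaves like $\log|x|$, so an arbitrary $\bo g$ would only give $\bo v=O(\log|x|)$ and this is not enough to kill the surface integral at infinity in the energy identity. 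This is precisely why the reduction must be performed modulo constants—i.e.\ inside $H_\ast^{1/2}$—since that is what secures $\int_{\partial\Omega}\bo g=\bo 0$ and hence the $O(|x|^{-1})$ decay that makes the energy argument go through.
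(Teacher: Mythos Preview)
Your proof is correct and follows essentially the same approach as the paper: both use the duality/adjoint argument, define the single layer potential $\bo S_{\partial\Omega}\bo g$, exploit the mean-zero condition to kill the logarithmic growth at infinity, obtain $\bo v\equiv\bo 0$ in the exterior of $\hat\Omega$, propagate this to the interior by analytic continuation and continuity of the single layer, and conclude via the jump of the conormal derivative. The only cosmetic difference is that the paper disposes of the exterior step by citing well-posedness of the exterior Dirichlet problem, whereas you write out the energy argument explicitly; your identification of the correct dual space (mean-zero elements of $H^{-1/2}(\partial\Omega)^2$) is in fact cleaner than the paper's phrasing.
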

\begin{proof} 
We will prove that the adjoint $\bo B_{\partial\hat{\Omega}}^\ast=\tau_{\partial\hat{\Omega}}\bo S_{\partial\Omega}$ is injective by verifying that $\text{Ker}(\bo B_{\partial\hat{\Omega}}^\ast)=\left\{\bo 0\right\}.$ Let $\phi\in H_\ast^{\frac{1}{2}}(\partial\Omega)^2$ such that $\bo B_{\partial\hat{\Omega}}^\ast(\phi)=\bo 0$ and define
\[\bo u=\bo S_{\partial\Omega}\phi.\]
Note that $\bo u$ satisfies $\di A(e(\bo u))  =  \bo 0 $ in $\mathbb{R}^2\backslash\partial\Omega$ and solves the exterior problem
\begin{equation}\label{probext}
\left\{\begin{array}{rcll}
\di A(e(\bo u)) & = & \bo{0} & \text{ in } \mathbb{R}^2\backslash\bar{\hat{\Omega}} \\
\bo u & = &\bo 0 & \text{ on } \partial\hat{\Omega} \\
\bo u (x)&=&\bo c \log|x|+\mathcal{O}(1) & |x|\rightarrow\infty,
\end{array}\right.
\end{equation}
where (cf. \cite{Chen-Zhou})
\[\bo c=-\frac{\lambda+3\mu}{4\pi\mu(\lambda+2\mu)}\int_{\partial\Omega}\phi(y)ds_y=\bo 0\]	
because $\phi\in H_\ast^{\frac{1}{2}}(\partial\Omega)^2.$
Thus,~\eqref{probext} is well posed with trivial solution $\bo u=0.$ By analytic continuation, the external trace of $\bo u$ on $\partial\Omega$ and the external trace of the surface traction vector $Ae(\bo u)\bo n$ are both null. By continuity of the single layer operator through the boundary, the inner trace of $\bo u$ at $\partial\Omega$ is null which implies that $\bo u$ solves
\begin{equation}
\left\{\begin{array}{rcll}
\di A(e(\bo u)) & = & \bo 0 & \text{ in } \Omega \\
\bo u & = &\bo 0 & \text{ on } \partial\Omega,
\end{array}\right.
\label{probint}
\end{equation}
which is well posed with trivial solution $\bo u=\bo0.$ Thus, the inner traces (on $\partial\Omega$) of $\bo u$ and of the surface traction vector $Ae(\bo u)\bo n$ are both null. Taking into account that $\phi$ is equal to the jump of the surface traction vector at the boundary (cf.~\cite{Chen-Zhou}), we conclude that $\phi=\bo 0$ and $\text{Ker}(\bo B_{\partial\hat{\Omega}}^\ast)=\left\{\bo 0\right\}$.\end{proof}
\begin{thm}
	\label{betti}
	Let $\Omega$ be a sufficiently smooth domain, in such a way that the eigenfunctions are in $\bo H^1(\Omega)$. Then, the traces on $\partial\Omega$ of the eigenfunctions associated with positive eigenvalues belong to $H_\ast^{\frac{1}{2}}(\partial\Omega)^2.$
\end{thm}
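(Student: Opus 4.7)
The plan is to reduce the claim to the orthogonality-to-rigid-motions property that was already established in Section~\ref{sec:properties}, right after the variational formulation \eqref{eq:var-form}.

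First I would recall the characterization \eqref{eq:zeri-eigenfunctions} of the space $\bo R(\Omega)$ of rigid motions. In dimension two (and in fact in any dimension $d$) the constant vector fields $\bo e_i$, $i=1,\dots,d$, belong to $\bo R(\Omega)$, since they correspond to the choice $a = \bo e_i$, $B = 0$ in \eqref{eq:zeri-eigenfunctions}. So constants are a subclass of rigid motions.

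Next I would invoke the identity derived in the paragraph following \eqref{eq:var-form}, namely
\[\int_{\partial \Omega} \bo u \cdot \bo r \, ds = 0 \quad \text{for every } \bo r \in \bo R(\Omega),\]
valid for any eigenfunction $\bo u$ associated with a positive eigenvalue $\Lambda_n(\Omega)$ with $n \geq 1$ (this is a direct consequence of \eqref{eq:var-form} tested against a rigid motion, combined with $Ae(\bo r)=0$). Specializing this orthogonality to $\bo r = \bo e_i$ yields
\[\int_{\partial \Omega} u_i \, ds = 0, \quad i=1,\dots,d,\]
where $u_i$ denotes the $i$-th component of $\bo u$. Equivalently, $\int_{\partial \Omega} \bo u \, ds = \bo 0$.

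Finally, the trace of an $\bo H^1(\Omega)$ eigenfunction lies in $\bo H^{1/2}(\partial\Omega)$ by the standard trace theorem for Lipschitz domains, so combining this with the vanishing mean property just obtained gives $\tau_{\partial\Omega}\bo u \in H_\ast^{1/2}(\partial\Omega)^2$, which is the conclusion. There is no serious obstacle here: the only content is recognizing that constants sit inside $\bo R(\Omega)$ and then applying a statement that the authors already proved.
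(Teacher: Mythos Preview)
Your proposal is correct and is essentially the same argument as the paper's: both test the eigenvalue equation against constant vectors and use that the bilinear form vanishes on them, yielding $\int_{\partial\Omega}\bo u\,ds=\bo 0$. The only cosmetic difference is that the paper writes this out via Betti's formula with $\bo v\equiv 1$, whereas you (more economically) cite the orthogonality to rigid motions already recorded after \eqref{eq:var-form}.
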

\begin{proof}
It follows from Betti's formula (cf.~\cite{mclean}), taking $\bo u$ to be an eigenfunction associated to a positive eigenvalue and $\bo v\equiv1$ we get
\[\int_\Omega\Big(\underbrace{\di A(e(\bo u))}_{=0}\cdot \bo v-\underbrace{\di A(e(\bo v))}_{=0}\cdot \bo u\Big)dx=\int_{\partial\Omega}\Big(\underbrace{Ae(\bo u)\bo n}_{=\Lambda(\Omega) \bo u}\cdot \bo v- \underbrace{Ae(\bo v)\bo n}_{=0}\cdot \bo u\Big)ds.\]
Thus,
\[\Lambda(\Omega)\int_{\partial\Omega}\bo uds=\bo 0\]
and since $\Lambda(\Omega)>0$ we conclude that $\int_{\partial\Omega}\bo u ds=\bo 0$ and the conclusion follows.\end{proof}
\begin{rem}
	Theorem~\ref{densidade} ensures density of the traces of the single layer in $H_\ast^{\frac{1}{2}}(\partial\Omega)^2$ and by Theorem~\ref{betti} density in the space of traces of the eigenfunctions associated to positive eigenvalues. Using a similar argument it could be proven that the traces on $\partial\Omega$ of
	\[\bo v=(\bo S_{\partial\hat{\Omega}}\phi)+\alpha_1\begin{pmatrix}1\\0\end{pmatrix}+\alpha_2\begin{pmatrix}0\\1\end{pmatrix},\ \alpha_1,\alpha_2\in\mathbb{R}\]
	are dense in $H^{\frac{1}{2}}(\partial\Omega)^2$ (cf.~\cite{Alves-Martins}).
\end{rem}

Taking into account Theorem~\ref{densidade} we consider a bounded simply connected domain $\hat{\Omega}\subset\mathbb{R}^2$ such that $\bar{\Omega}\subset\hat{\Omega}$ and place $N$ source points on the boundary $\partial\hat{\Omega}.$ 
The Method of Fundamental Solutions (MFS) approximation is a discretization of the single layer operator,
\begin{equation}
\label{mfs}
\bo u(x)=(\bo S_{\partial\hat{\Omega}}\phi)(x)\approx \bo u_N(x)=\sum_{j=1}^N\bo\Phi_{\bo y_j}(x)\cdot\bo a_j,\ \bo a_j\in\mathbb{R}^2.
\end{equation}
The MFS linear combination satisfies Lam\'{e} equations. A straightforward approach for the calculation of the positive Steklov-Lam\'{e} eigenvalues could be defining $N$ collocation points $\bo x_1,...,\bo x_M\in\partial\Omega$ and the unit outward vectors at these points $\bo n_1,...,\bo n_M$ and imposing the boundary condition of problem~\eqref{eq:steklov-lame}. Taking $M=N$ would lead to the solution of a generalized eigenvalue 
\begin{equation}
\label{mateigprob}
\bo A\cdot \bo X=\lambda \bo B\cdot \bo X,
\end{equation}
where
\[\left[\bo A\right]_{i,j}=Ae(\bo \Phi_{\bo y_j} (\bo x_i))\bo n_i \quad\text{and}\quad\left[\bo B\right]_{i,j}=\bo \Phi_{\bo y_j}(\bo x_i)\]
with square matrices $\bo A$ and $\bo B.$ However, several numerical tests revealed that a better approach would be to consider oversampling. Consider $M>N$, for instance $M=2N$ and instead of the generalized eigenvalue problem~\eqref{mateigprob}, compute the factorization $\mathbf{B}=\mathbf{QR}$. Afterwards, solve the generalized eigenvalue problem
\begin{equation}
\label{mateigprob2}
\left(\bo Q'\cdot \bo A\right)\bo\cdot \bo X=\lambda \bo R\cdot\bo X.
\end{equation}

As in previous studies of the application of the MFS for eigenvalue problems (eg.~\cite{Alves-Antunes_2013,Bogosel2}) we define the source points for the MFS by
\[\bo y_j=\bo x_j+\alpha\bo n_j,\]
for a small positive parameter $\alpha$. Details regarding the choice of this parameter are given in the presentation of the numerical results.

We will consider the numerical optimization of Steklov-Lam\'{e} eigenvalues in two classes of domains:
\begin{itemize}
	\item simply connected planar domains with fixed area;
	\item planar convex domains with fixed area for which Theorem \ref{thm:existence-convex} ensures the existence of solutions.
\end{itemize}

We parametrize the boundary of a general simply connected planar domain by
\begin{equation}
\label{defdom1}
\left\{(h_1(t),h_2(t)):t\in[0,2\pi[\right\},
\end{equation}
for some $2\pi$-periodic functions $h_1$ and $h_2$ such that \eqref{defdom1} defines a Jordan curve. 
We consider the approximations
\[h_1(t)\approx \gamma_1(t)=a_0^{(1)}+\sum_{j=1}^Pa_j^{(1)}\cos(jt)+\sum_{j=1}^Pb_j^{(1)}\sin(jt)\]
and
\[h_2(t)\approx \gamma_2(t)=a_0^{(2)}+\sum_{j=1}^Pa_j^{(2)}\cos(jt)+\sum_{j=1}^Pb_j^{(2)}\sin(jt),\]
for some $P\in\mathbb{N}.$ 

The boundary of a planar convex domain is defined by \eqref{defdom1} where
\[ h_1(t) = p(t) \cos(t) - p'(t) \sin(t),\]
\[h_2(t)=p(t) \sin(t) + p'(t) \cos(t)\]
and the support function $p$ is approximated by a truncated Fourier series
\begin{equation}
p(t) = a_0 + \sum_{k=1}^\mathcal{P} \left( a_k \cos (kt) + b_k \sin (kt)  \right).
\label{fouDecomp}
\end{equation}
Note that in this case the convexity constrain correspond to the constraint
\begin{equation}
\label{conv_support}
p''(t)+p(t)\geq0,\ \forall t\in[0,2\pi[.
\end{equation}
Numerically, we will impose \eqref{conv_support} at a discrete set of points in $[0,2\pi[$ which leads to a system of (linear) inequalities (see~\cite{Antunes-Bogosel} for details).

The numerical optimizers are found by determining optimal coefficients in the previous expansions using a gradient type method, as in previous computational studies of extremal eigenvalues~\cite{ak-kao-osting,Antunes-Bogosel,Bogosel2,kao-osting-oudet}. The derivative of each eigenvalue with respect to the variation of each coefficient is calculated by the formula for the shape derivative obtained in~\cite{CDM}. Given a deformation field $\bo V$, the derivative of a (simple) Steklov-Lam\'{e} eigenvalue is given by
\begin{multline*}\Lambda'(\Omega,\bo V)=\int_{\partial\Omega}\Big(Ae(\bo u):e(\bo u)-4Ae(\bo u)\bo n\cdot\Pi e(\bo u)\bo n\\
 -\Lambda(\Omega)\bo u\cdot(\mathcal H\bo u+2\partial_n{\bo u}-4\Pi e(\bo u)\bo n)\Big)\bo V\cdot\bo nds,
\end{multline*}
where $\mathcal H$ is the curvature and $\Pi=\begin{pmatrix}1 &0\\0&0\end{pmatrix}$.

\section{Numerical results}
\label{sec:num-results}

In this section we present numerical results obtained using the numerical framework presented previously. All numerical computations and figures are done in Matlab. 

We start with some tests performed for the disk with unit area. Figure~\ref{fig:convergence_disk} shows the convergence curve of the MFS in the calculation of four eigenvalues ($\Lambda_i,\ i=1,4,20,100$) for $\lambda=1,\ \mu=0.5$ (left plot) and $\lambda=1,\ \mu=3$ (right plot). These results were obtained with $\alpha=0.015$. We note that the matrices involved in the generalized eigenvalue problem are highly ill conditioned if we take large values of $\alpha$ and new techniques for reducing the ill conditioning may be needed (eg.~\cite{Antunes_illcond}). It can be observed that the precision of the computations increases with the number of fundamental solutions $N$. For eigenvalues of small index the precision gets close to machine precision.

\begin{figure}[ht]
	\centering 
	\includegraphics[width=0.49\textwidth]{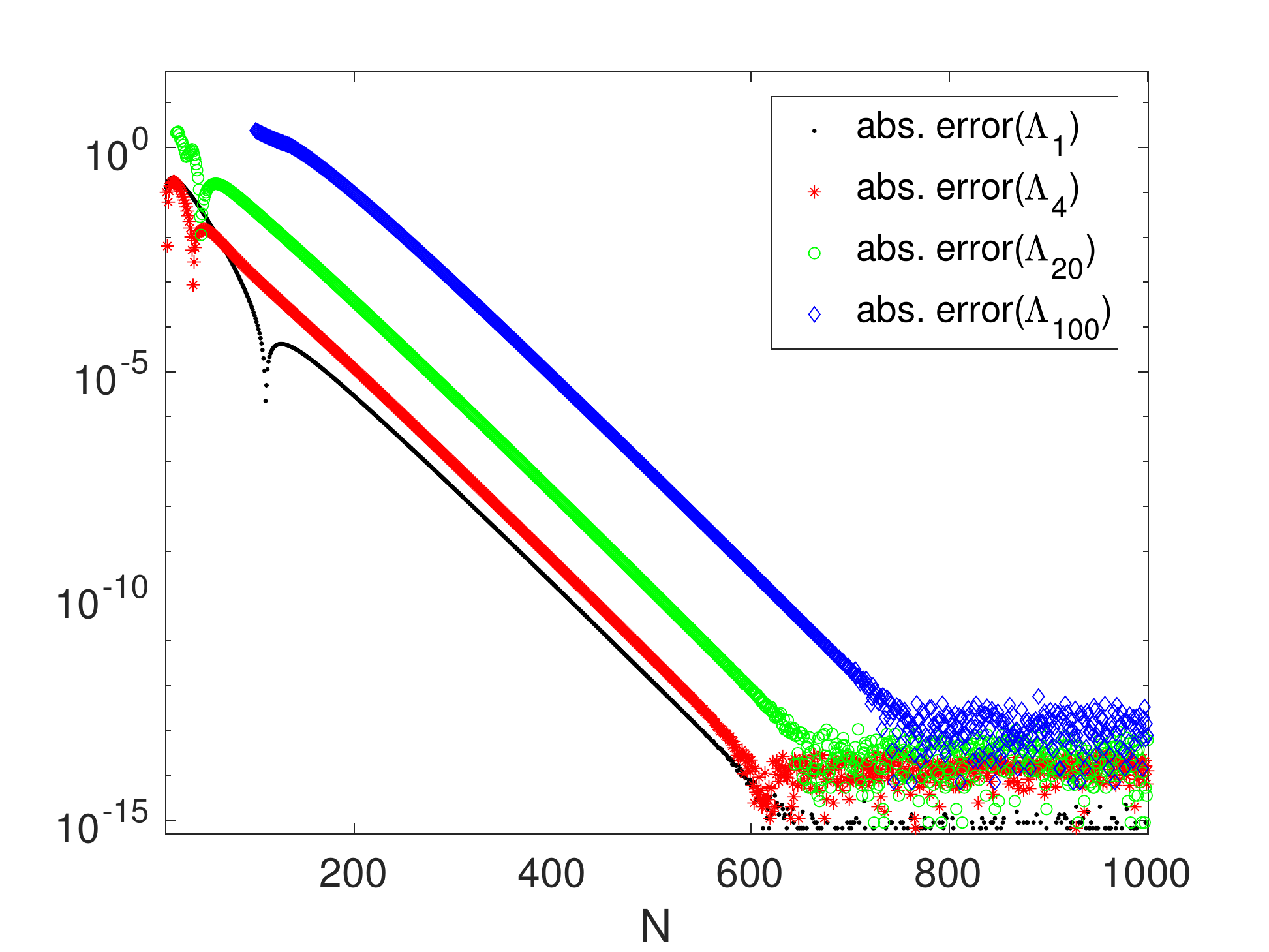}
	\includegraphics[width=0.49\textwidth]{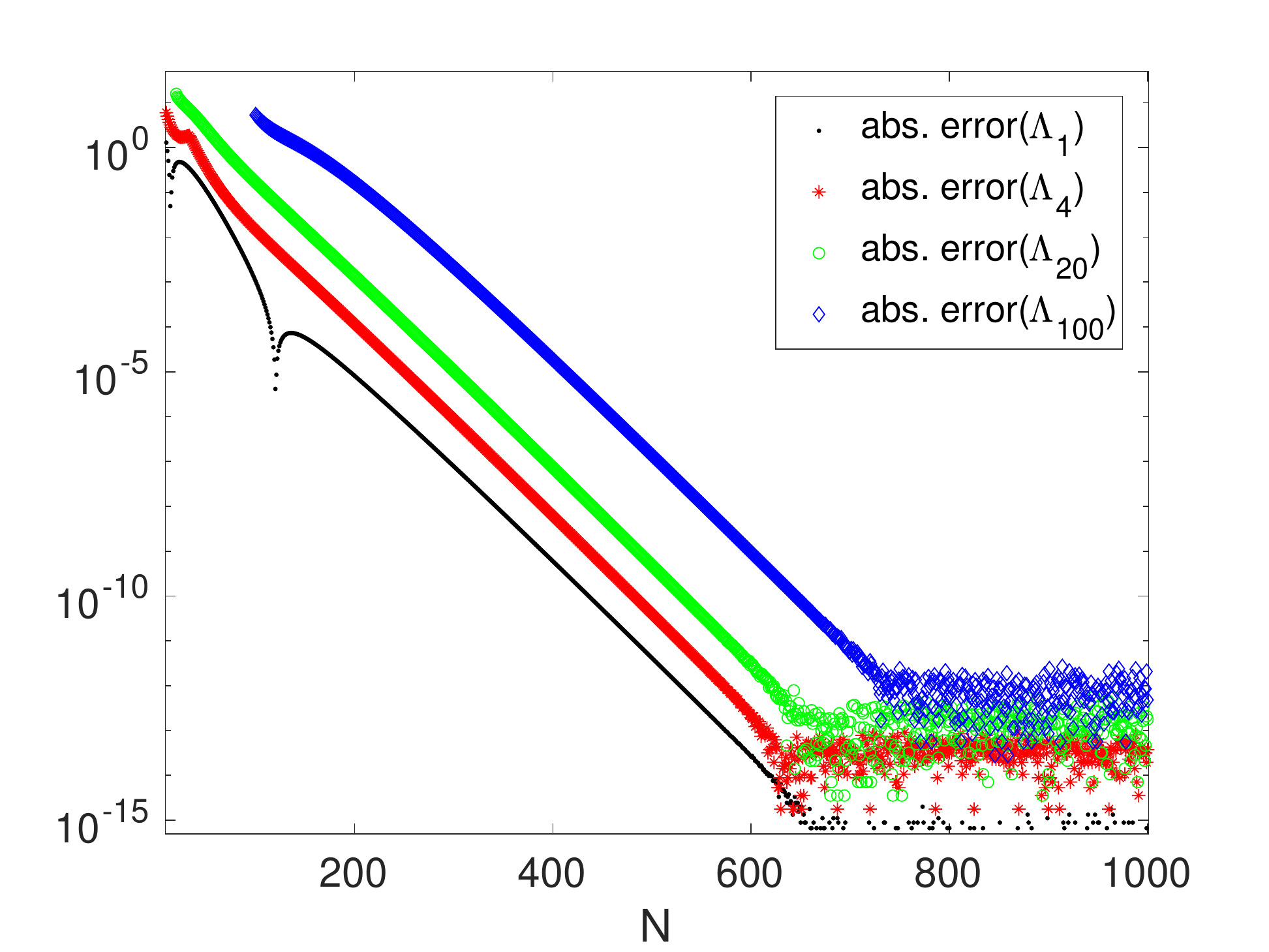}
	\caption{Plots of the absolute errors of the eigenvalues $\Lambda_i,\ i=1,4,20,100$ of the disk with unit area, as a function of the number of MFS basis functions for $\lambda=1,\ \mu=0.5$ (left plot) and $\lambda=1,\ \mu=3$ (right plot)}  \label{fig:convergence_disk}
\end{figure}

Figure~\ref{fig:disk_mu} shows the plot of the first 50 Steklov-Lam\'{e} eigenvalues of the disk with unit area, as a function of $\mu\in[0,10]$, keeping $\lambda=1$ (left plot). The right plot of the same Figure shows a zoom for $\mu\in[0,1].$
\begin{figure}[ht]
	\centering 
	\includegraphics[width=0.49\textwidth]{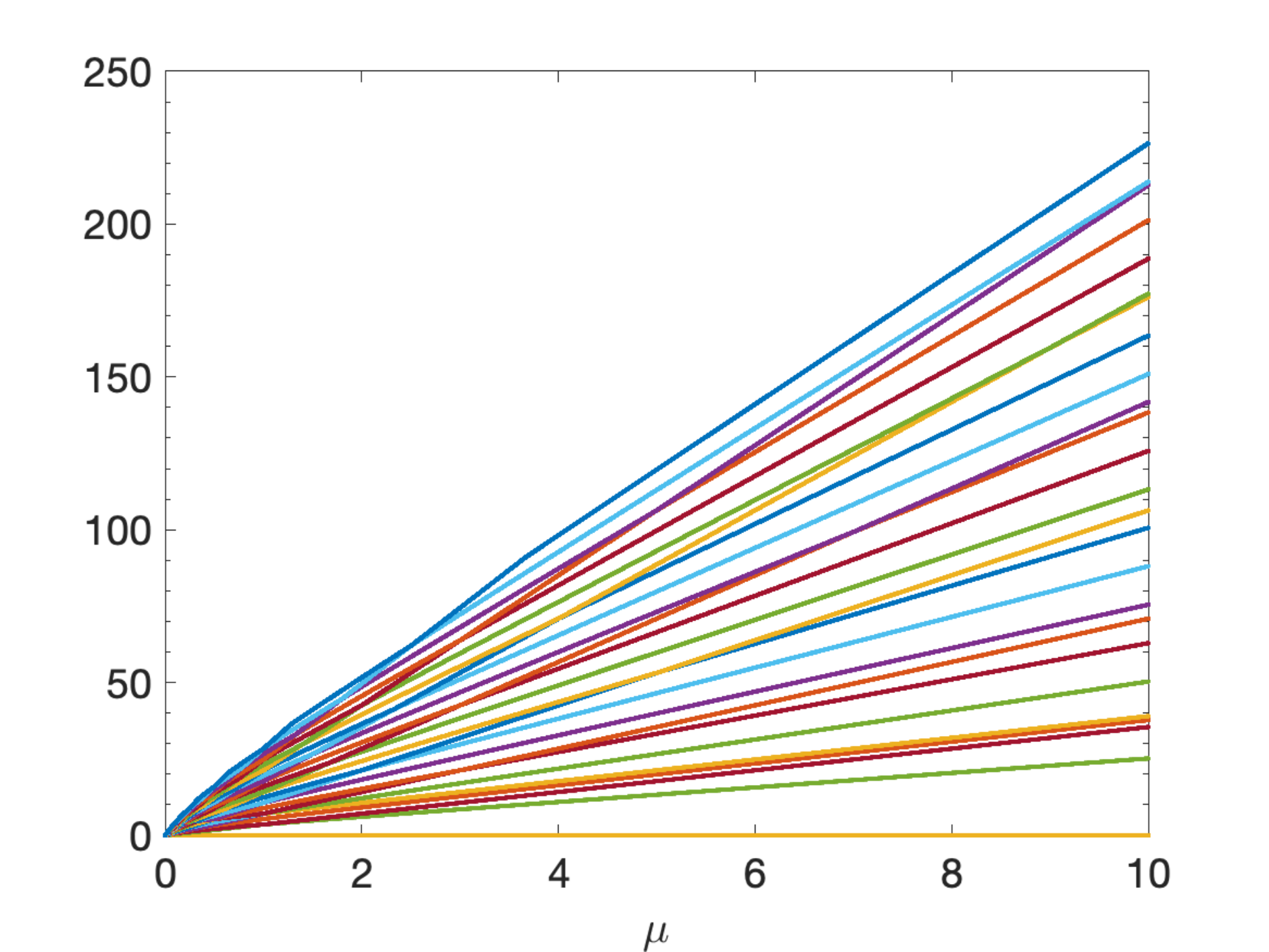}
	\includegraphics[width=0.49\textwidth]{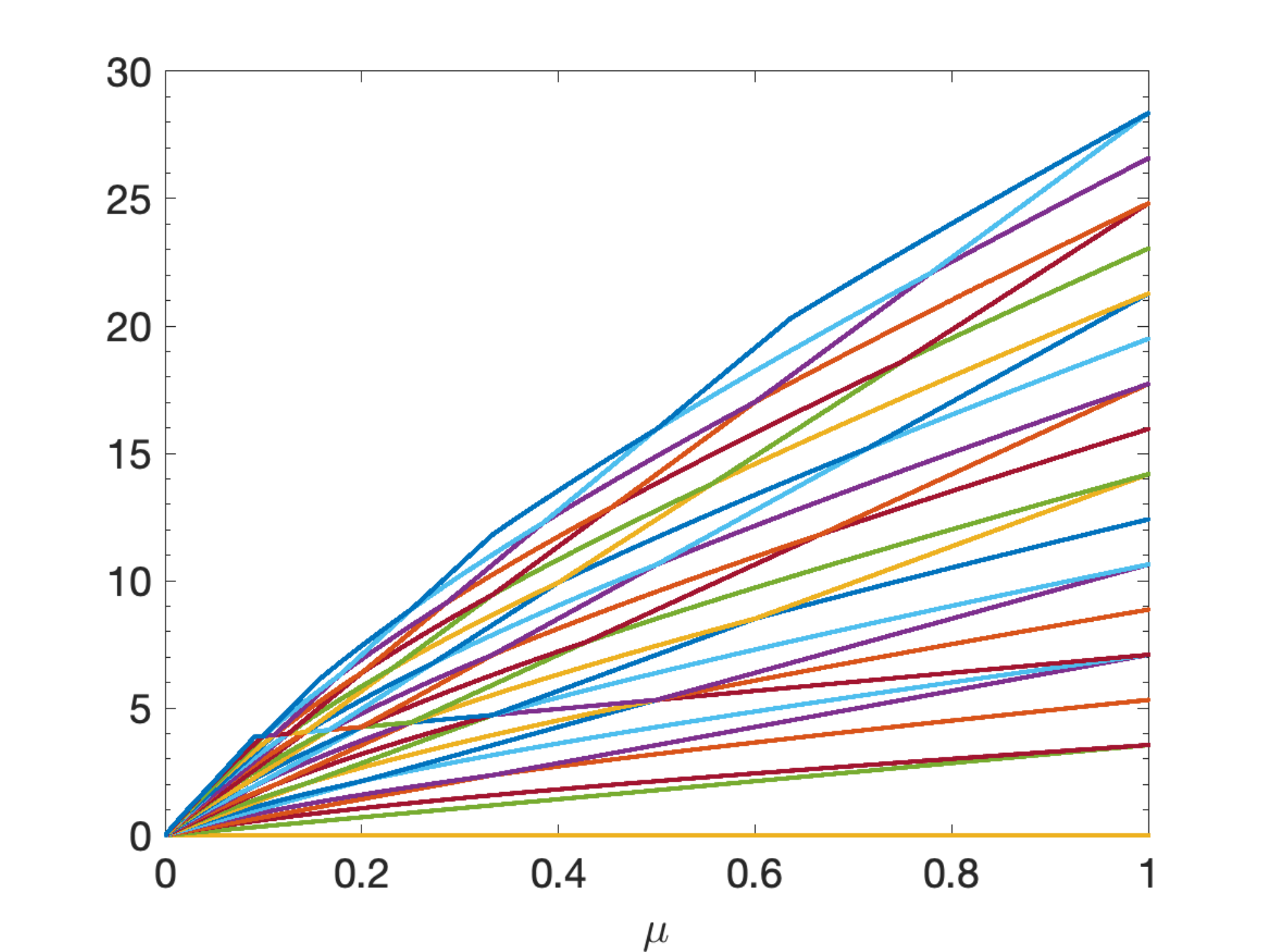}
	\caption{Plot of $\lambda_i,\ i=1,2,...,50$, for the disk with unit area with $\lambda=1$ and $\mu\in[0,10]$ (left plot) and a zoom for $\mu\in[0,1]$ (right plot).} \label{fig:disk_mu}
\end{figure}

Next, we show some numerical results concerning the computation of the Steklov-Lam\'e eigenvalues and eigenfunctions using the MFS for the domain $\Omega_1$ whose boundary is defined by
\[\partial\Omega_1=\left\{\left(\cos(t),\sin(t)+\frac{3}{10}\sin(3t)\right):\ t\in[0,2\pi[\right\}.\]
Figure~\ref{fig:pts_mfs} shows the plot of the collocation points (marked with $\txtr{\bullet}$) and source points (marked with $\txtb{\circ}$) for $\Omega_1$.
\begin{figure}[ht]
	\centering 
	\includegraphics[width=0.49\textwidth]{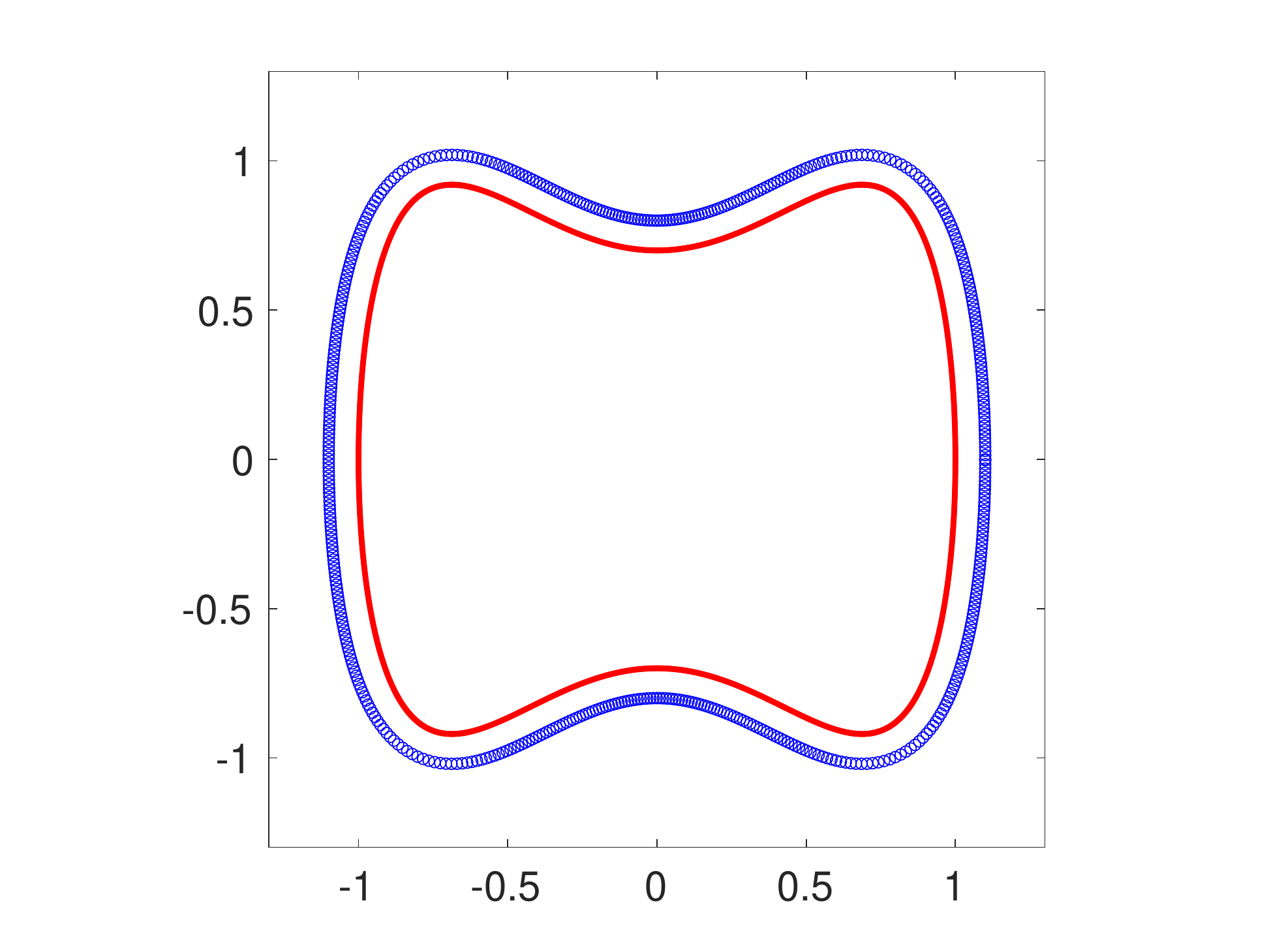}
	\caption{Plot of the collocation points (marked with $\txtr{\bullet}$) and source points (marked with $\txtb{\circ}$) for $\Omega_1$.} \label{fig:pts_mfs}
\end{figure}
Figure~\ref{fig:eig_funct} shows the plots of the first and second components for the eigenfunctions associated to the eigenvalues $\Lambda_i,\ i=1,2,7,20,100$ of $\Omega_1$ with $(\lambda,\mu)=(1,0.5)$ (left-hand side plots) and $(\lambda,\mu)=(1,3)$ (right-hand side plots).
\begin{figure*}
	\centering
	\begin{tabular}{ccm{1cm}cc}
		\includegraphics[width=0.17\textwidth]{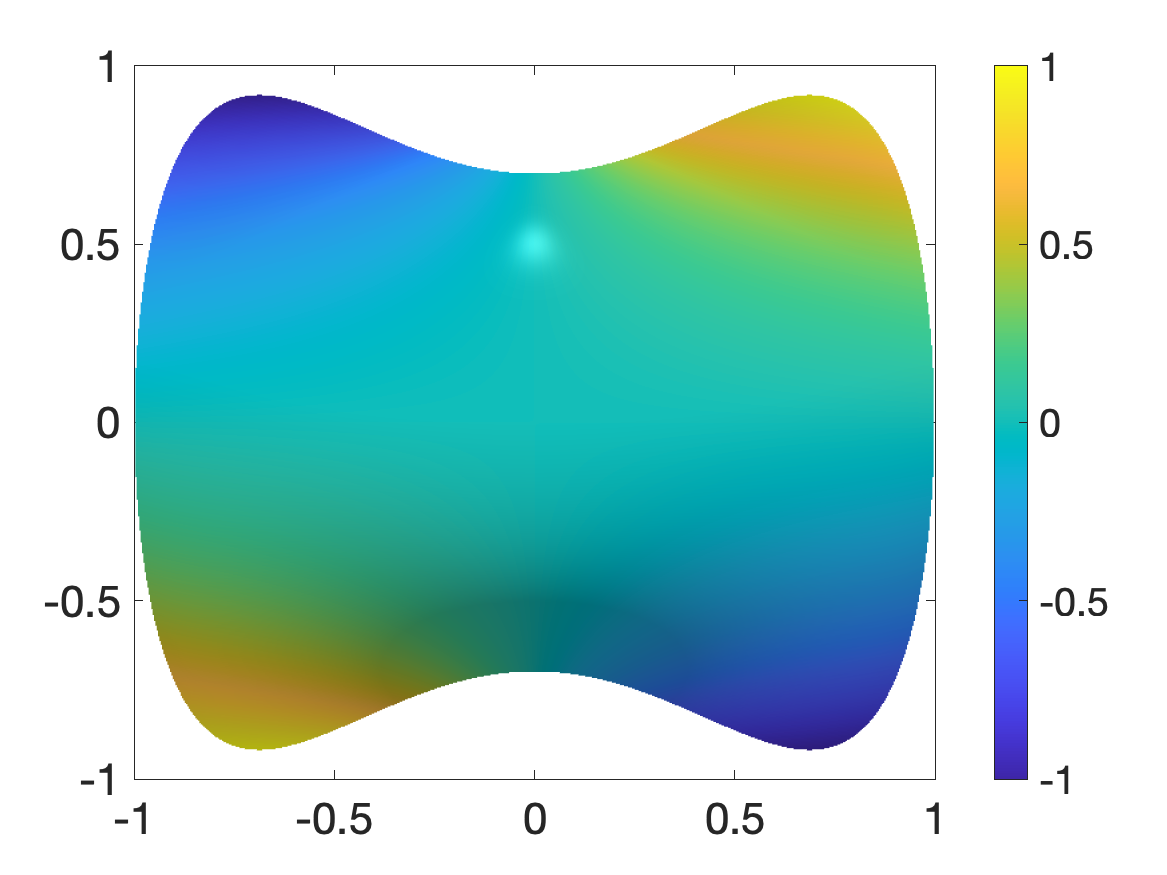}&
		\includegraphics[width=0.17\textwidth]{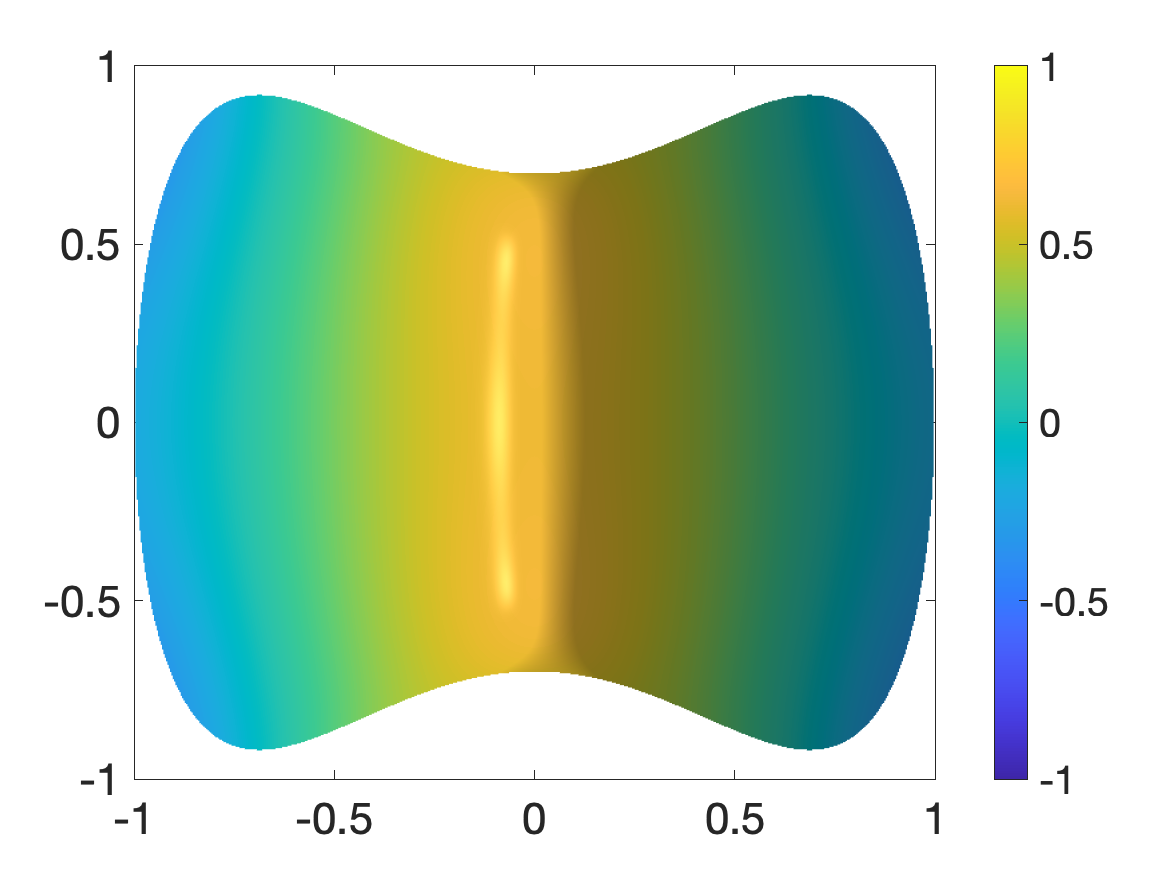}&  $\Lambda_1$ &
		\includegraphics[width=0.17\textwidth]{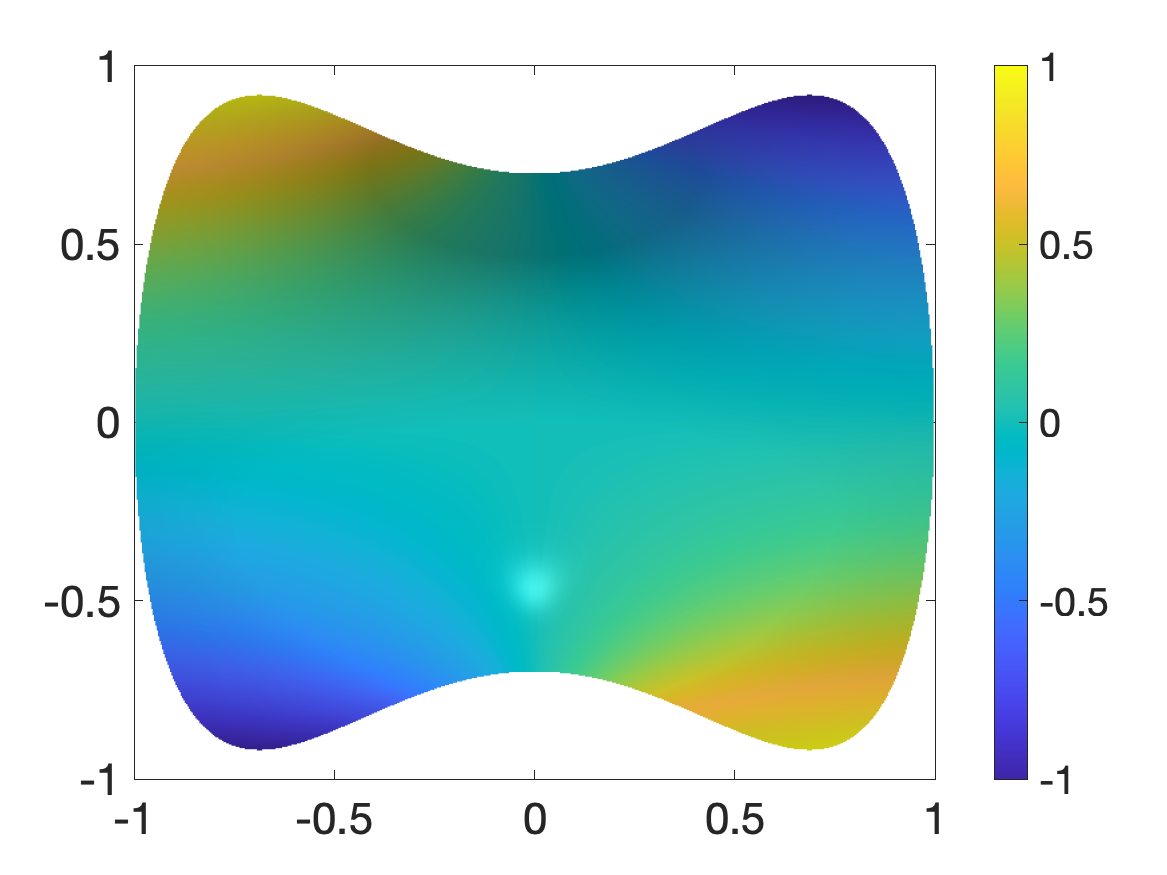}&
		\includegraphics[width=0.17\textwidth]{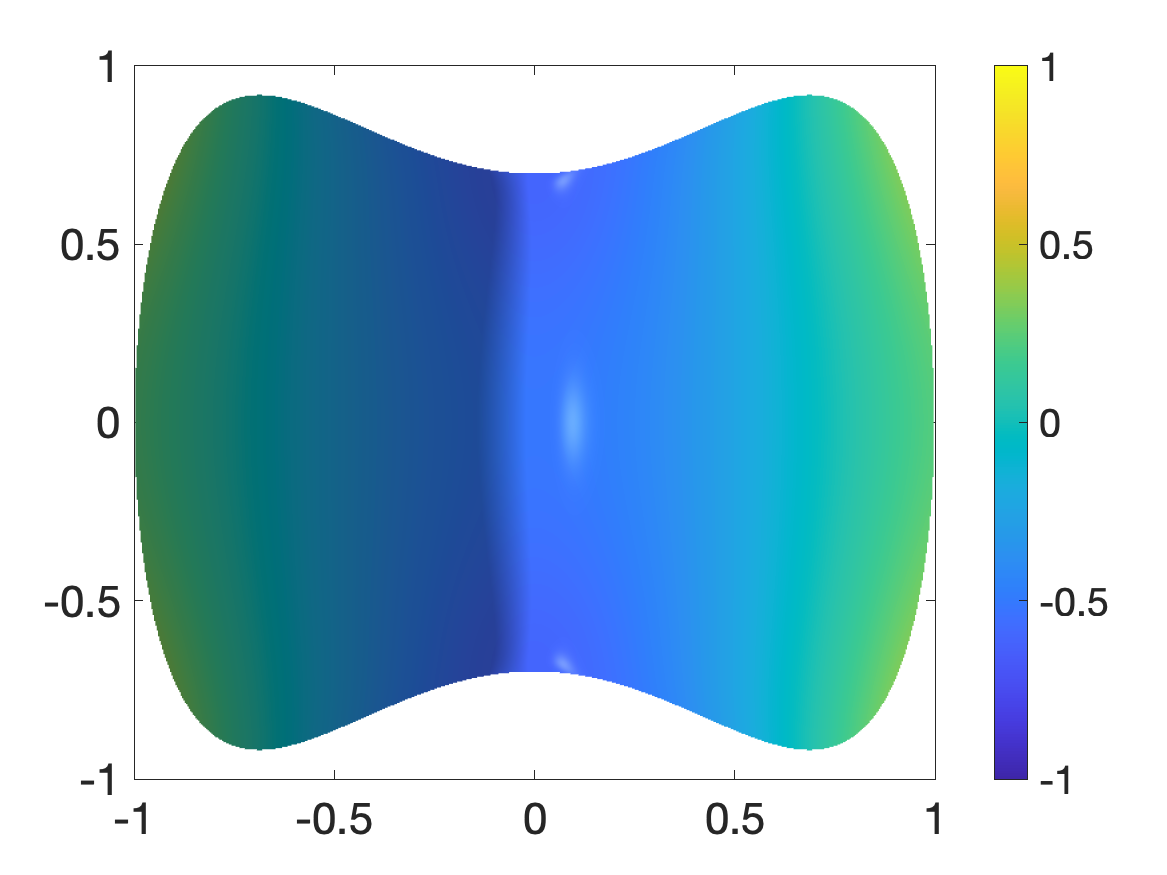}\\
		\includegraphics[width=0.17\textwidth]{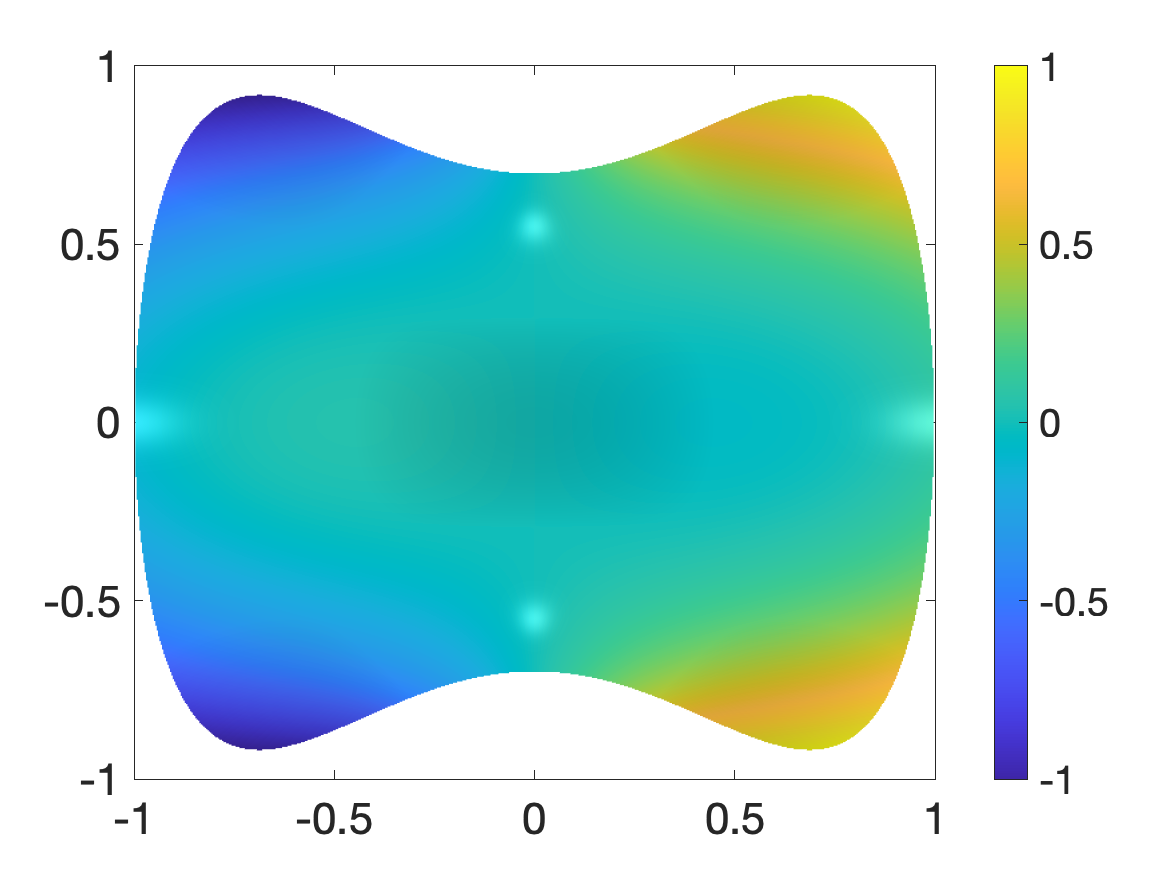}&
		\includegraphics[width=0.17\textwidth]{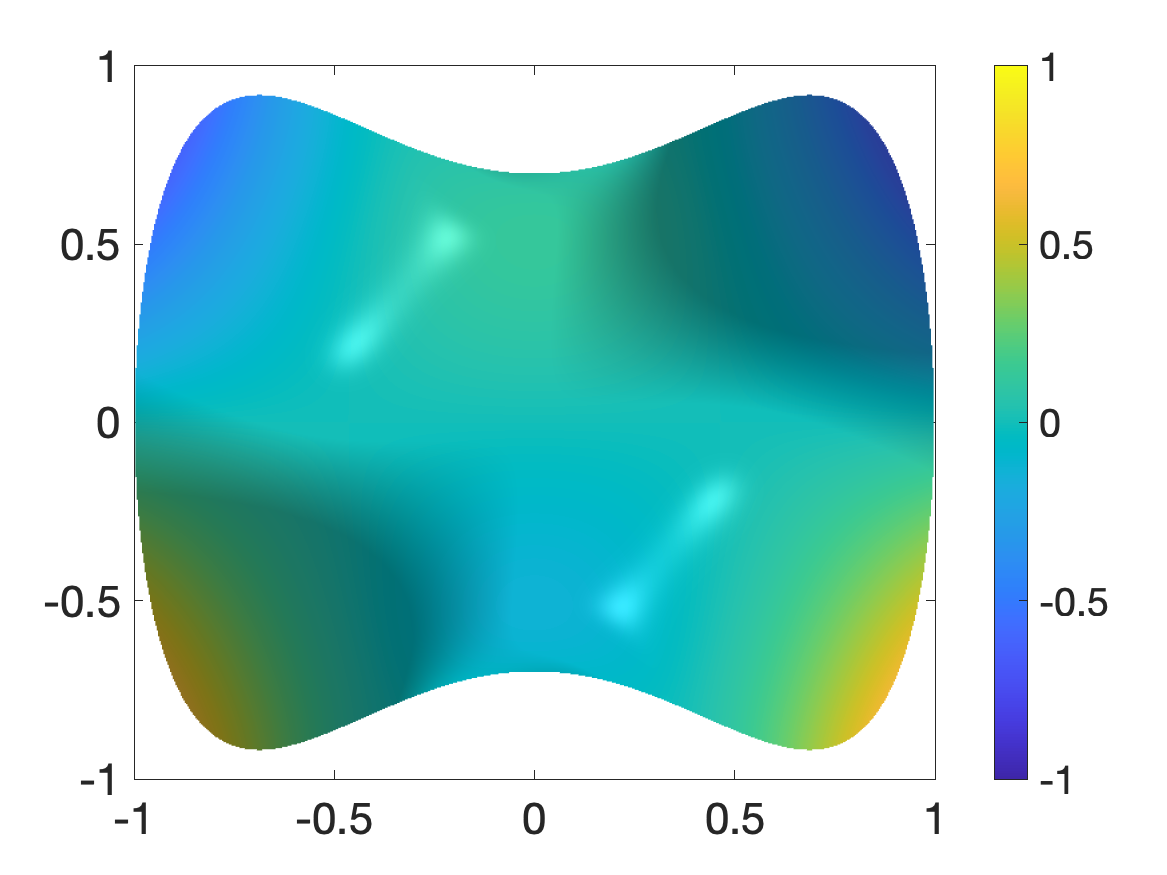}&$\Lambda_2$ &
		\includegraphics[width=0.17\textwidth]{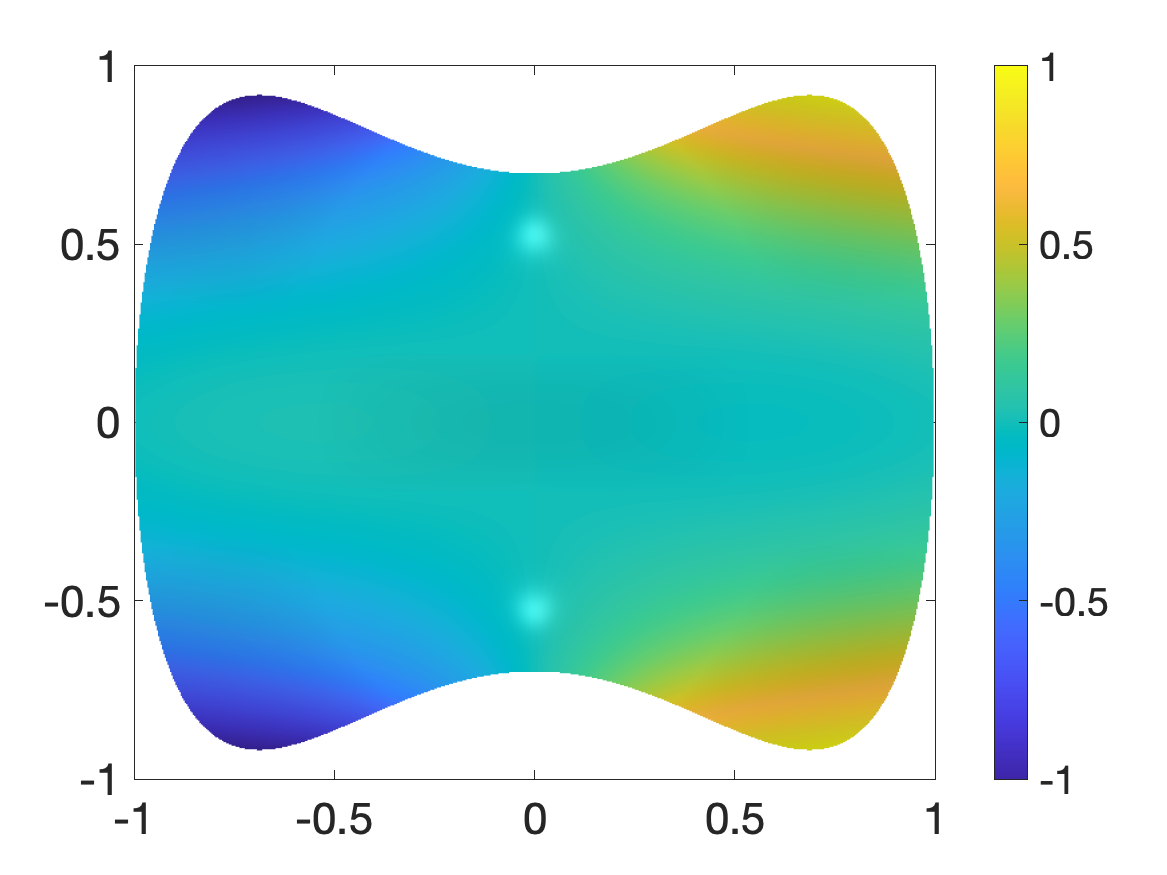}&
		\includegraphics[width=0.17\textwidth]{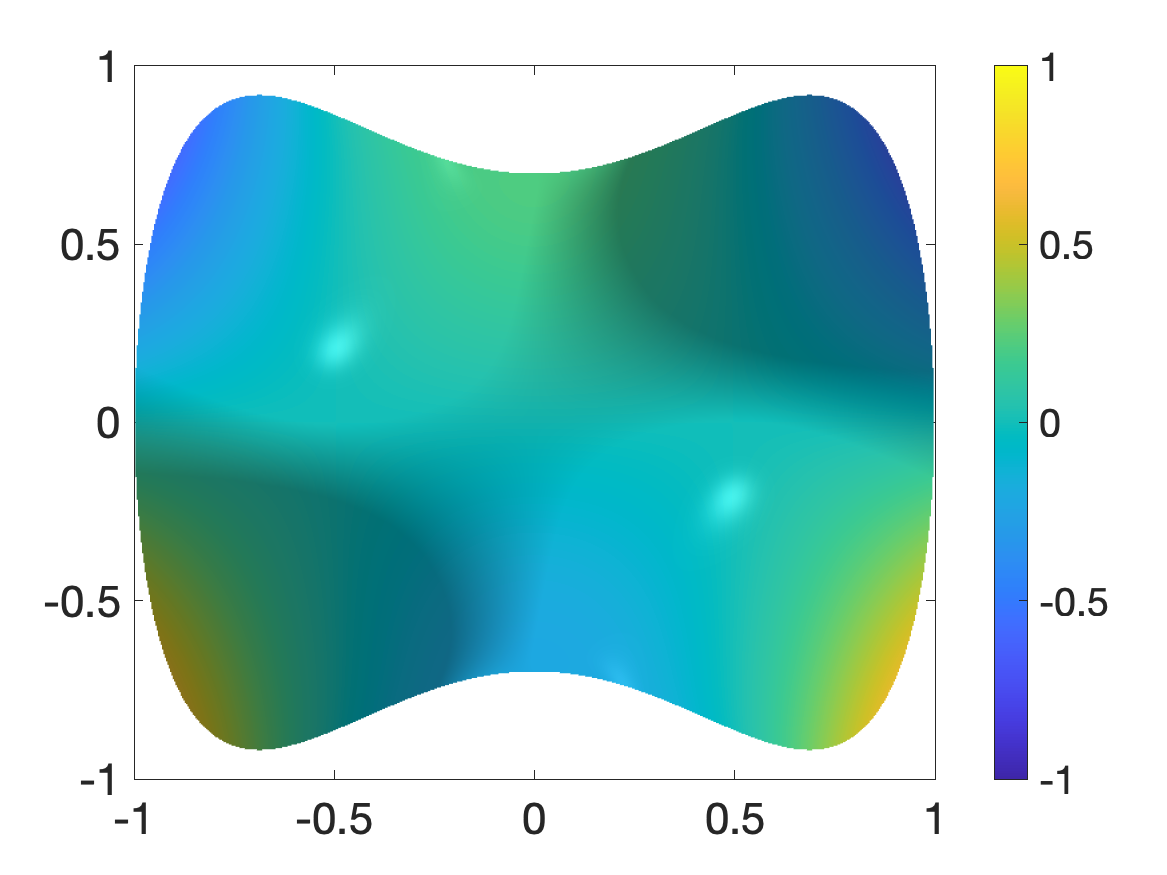}\\
		
		\includegraphics[width=0.17\textwidth]{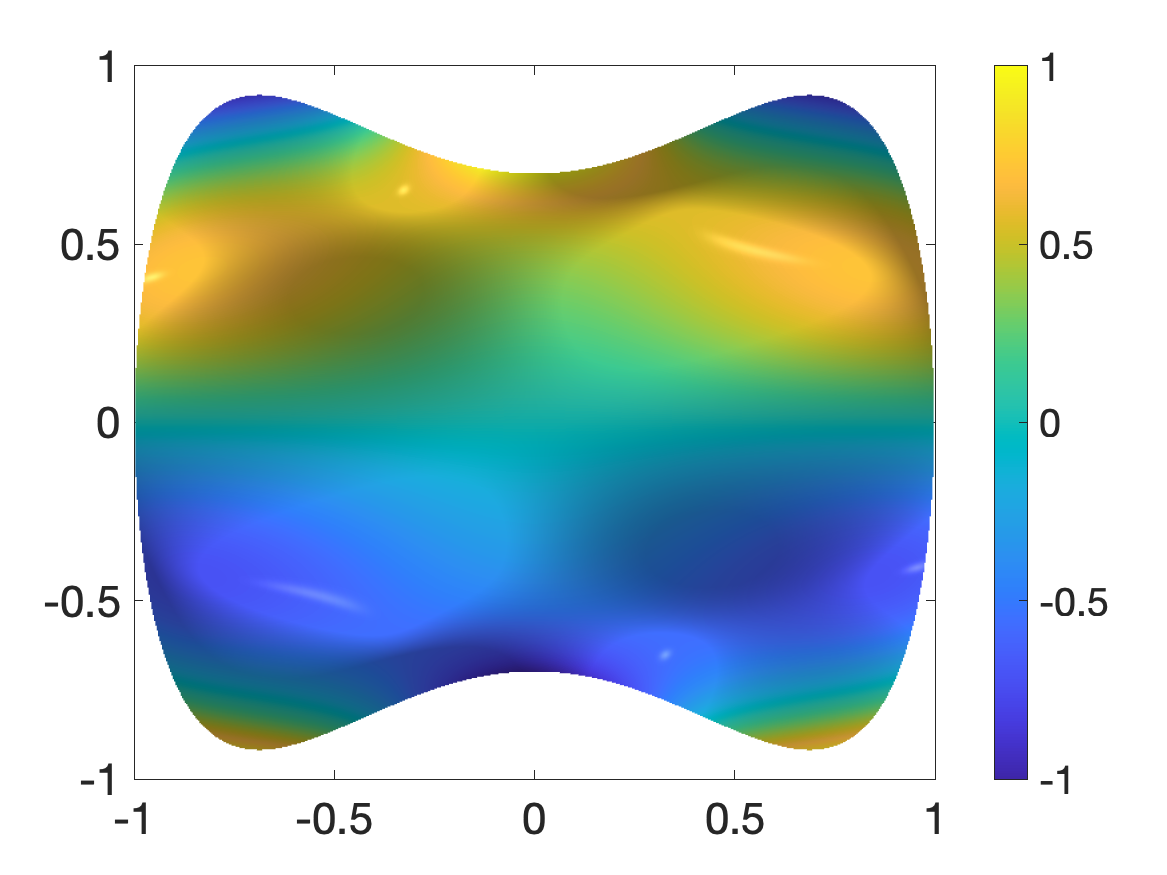}&
		\includegraphics[width=0.17\textwidth]{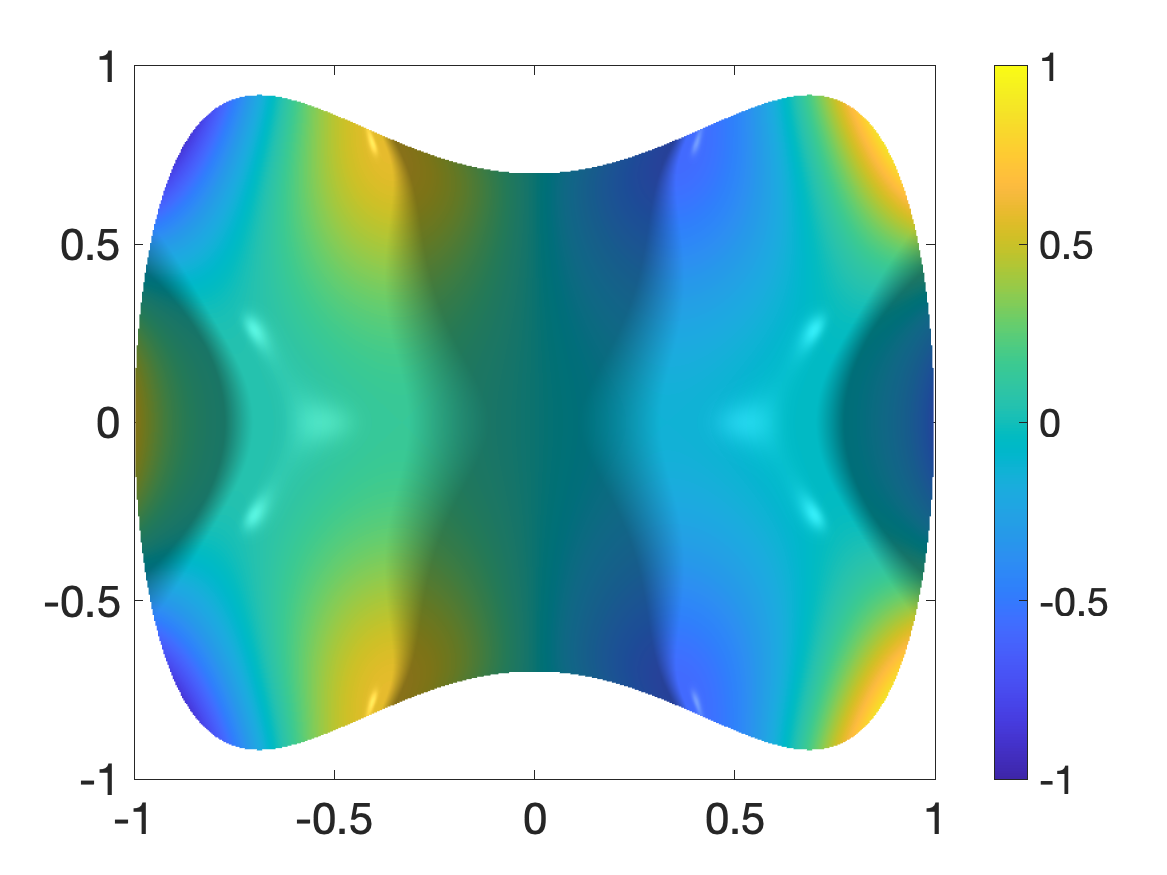}&$\Lambda_7$&
		\includegraphics[width=0.17\textwidth]{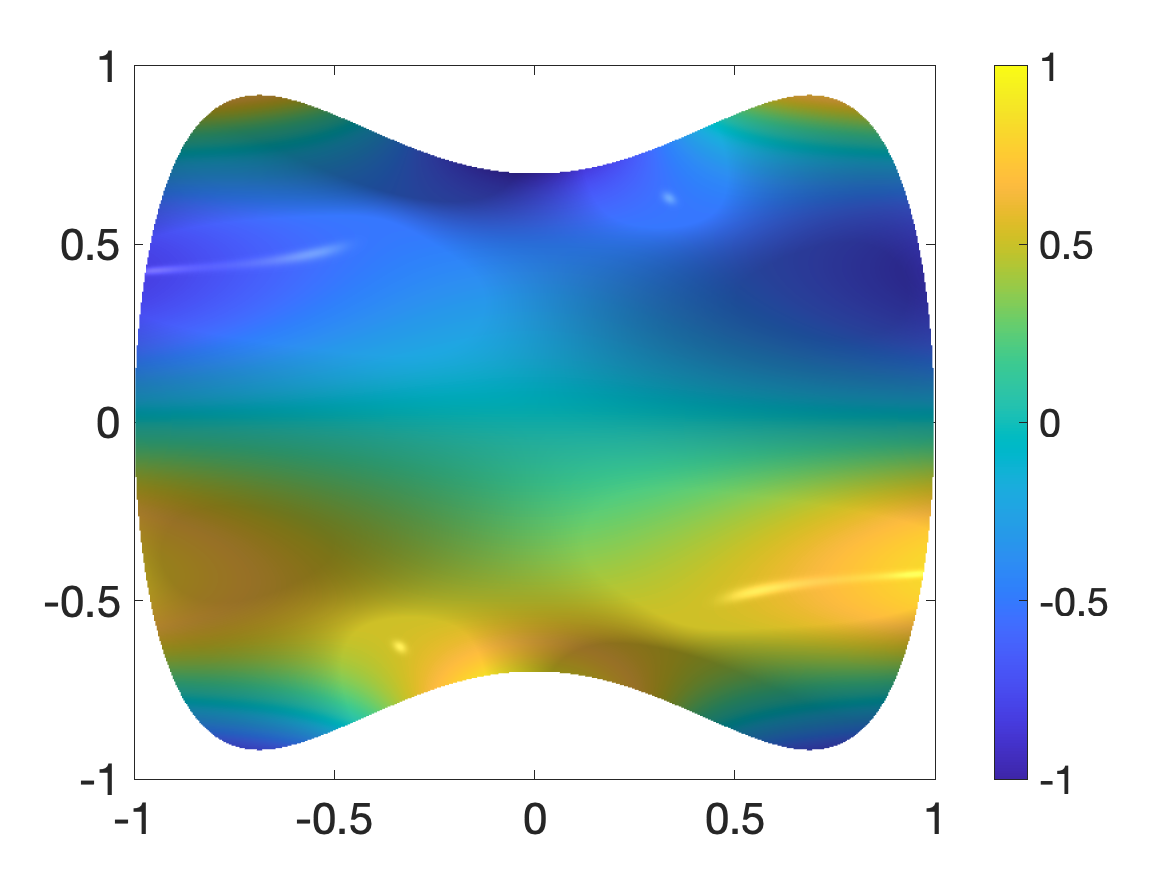}&
		\includegraphics[width=0.17\textwidth]{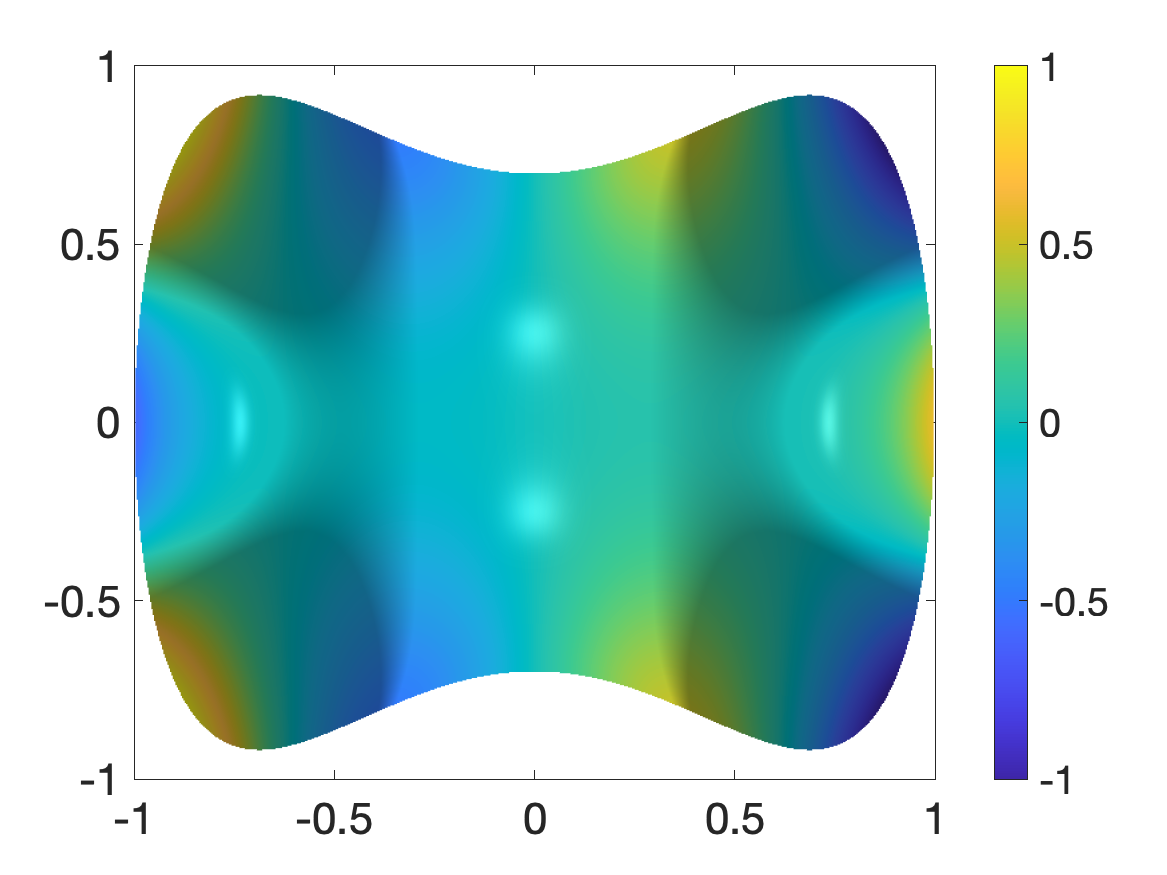}\\
		
		\includegraphics[width=0.17\textwidth]{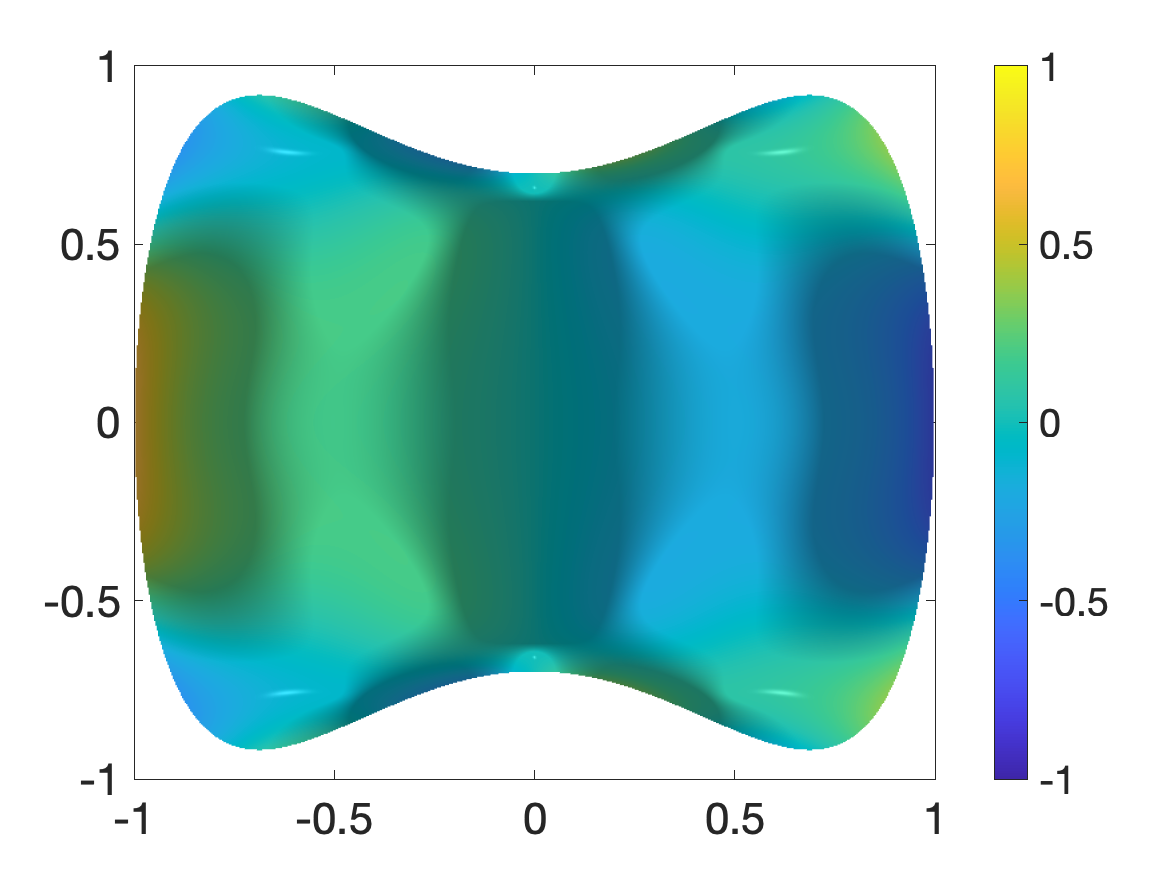}&
		\includegraphics[width=0.17\textwidth]{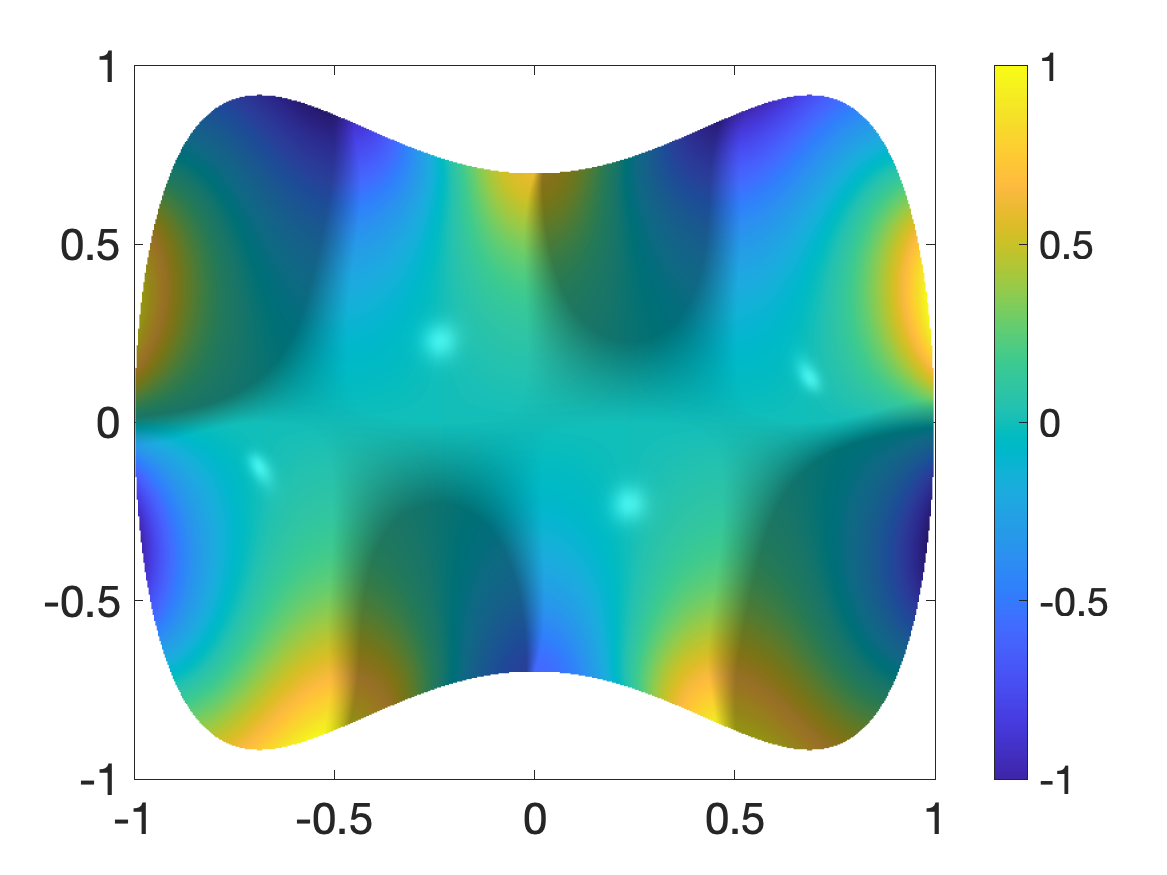}&$\Lambda_{20}$ &
		\includegraphics[width=0.17\textwidth]{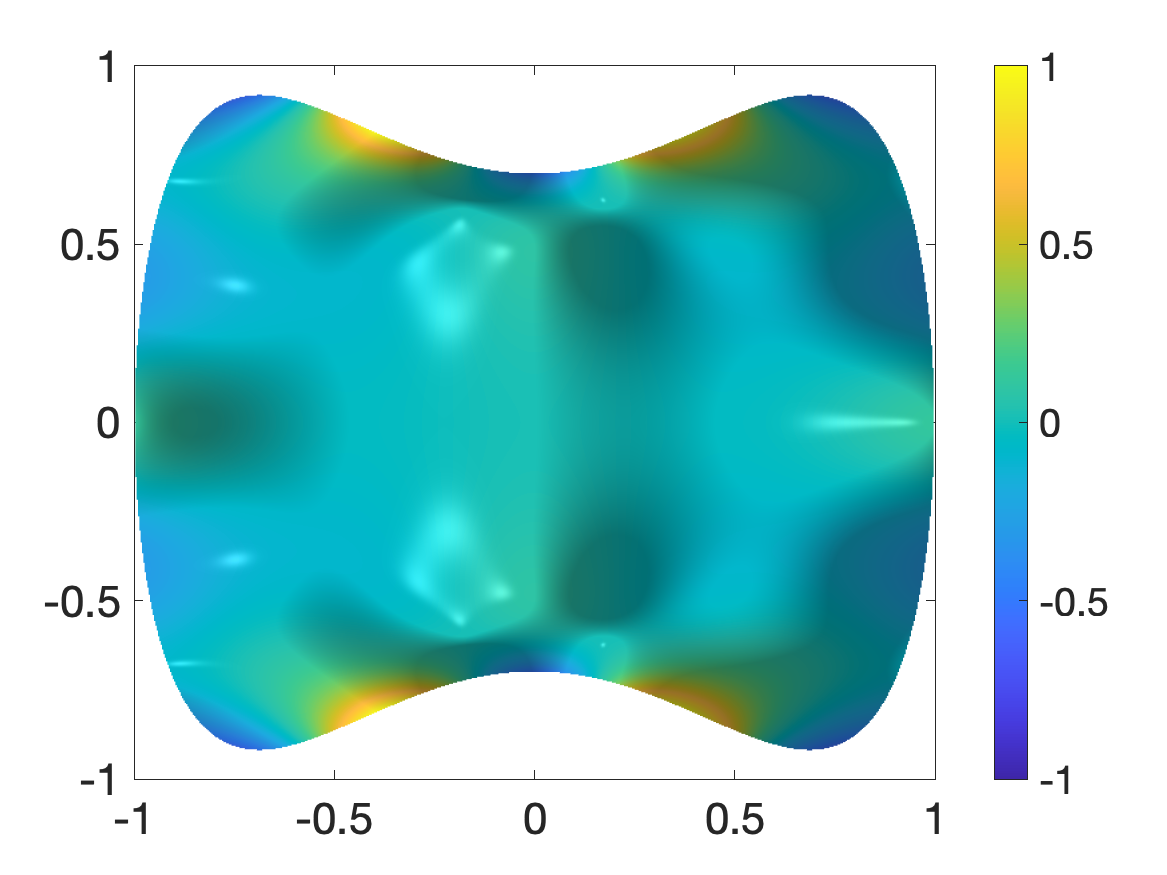}&
		\includegraphics[width=0.17\textwidth]{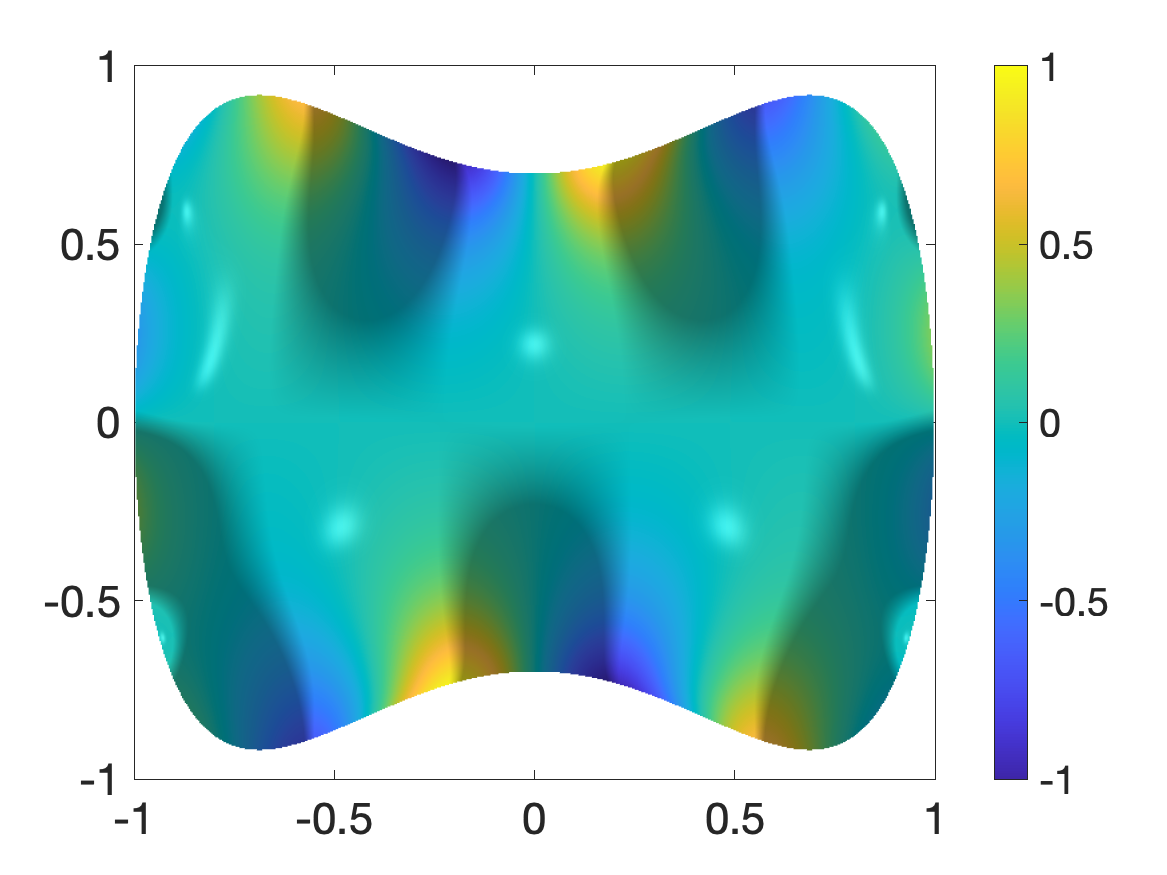}\\

		\includegraphics[width=0.17\textwidth]{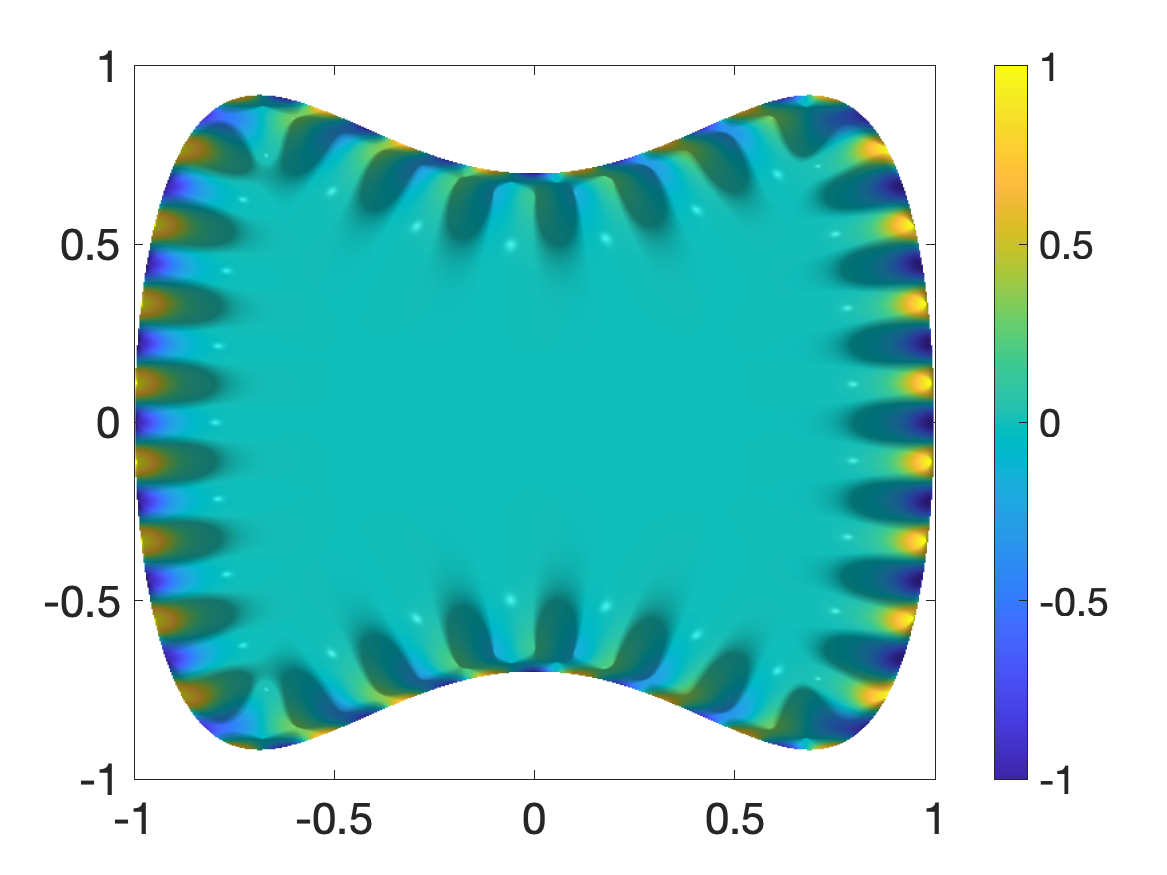}&
		\includegraphics[width=0.17\textwidth]{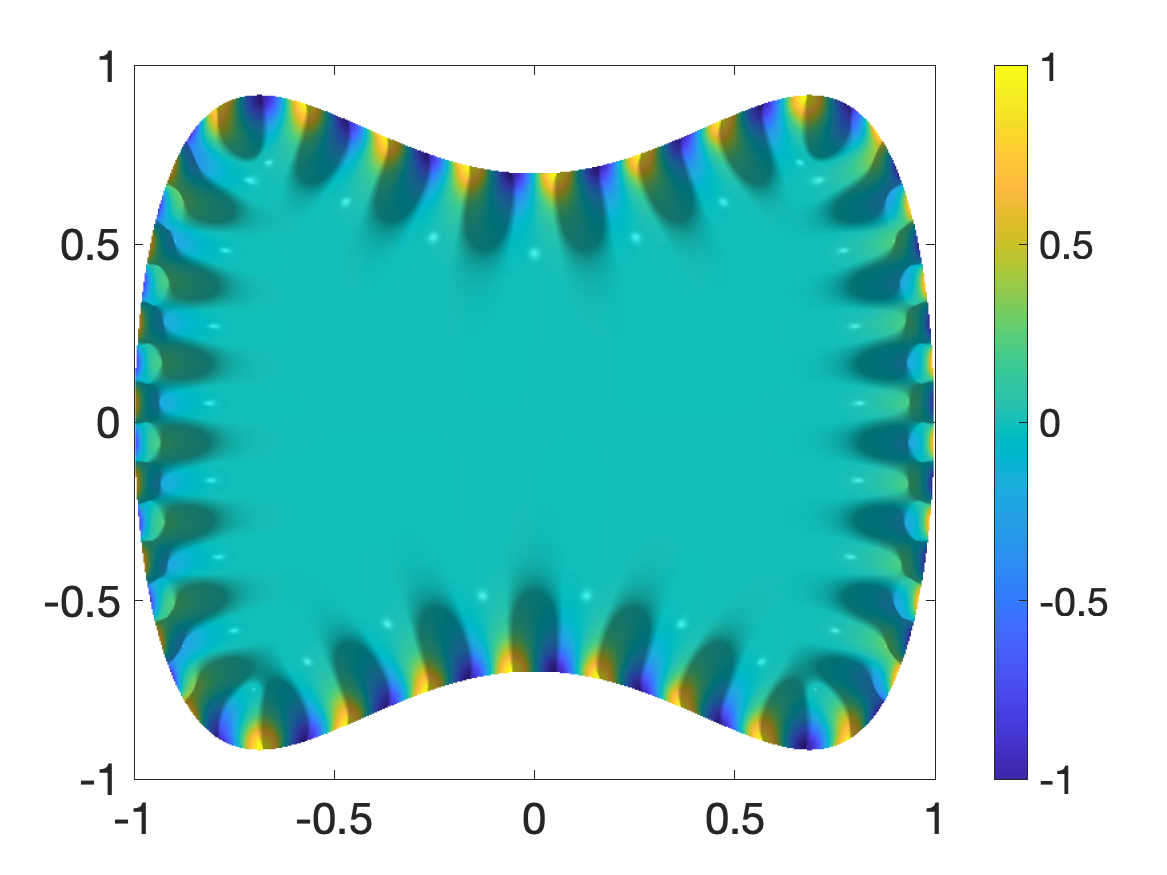}&$\Lambda_{100}$ &
		\includegraphics[width=0.17\textwidth]{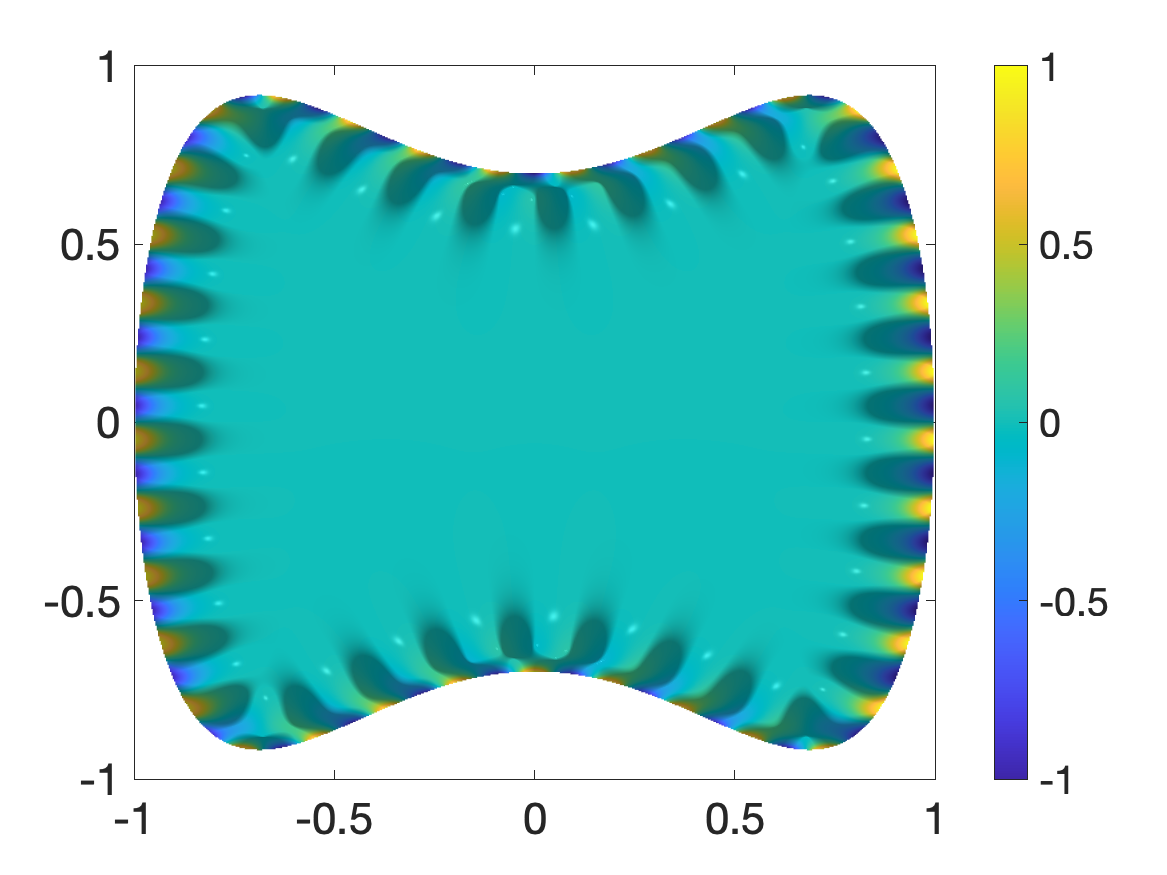}&
		\includegraphics[width=0.17\textwidth]{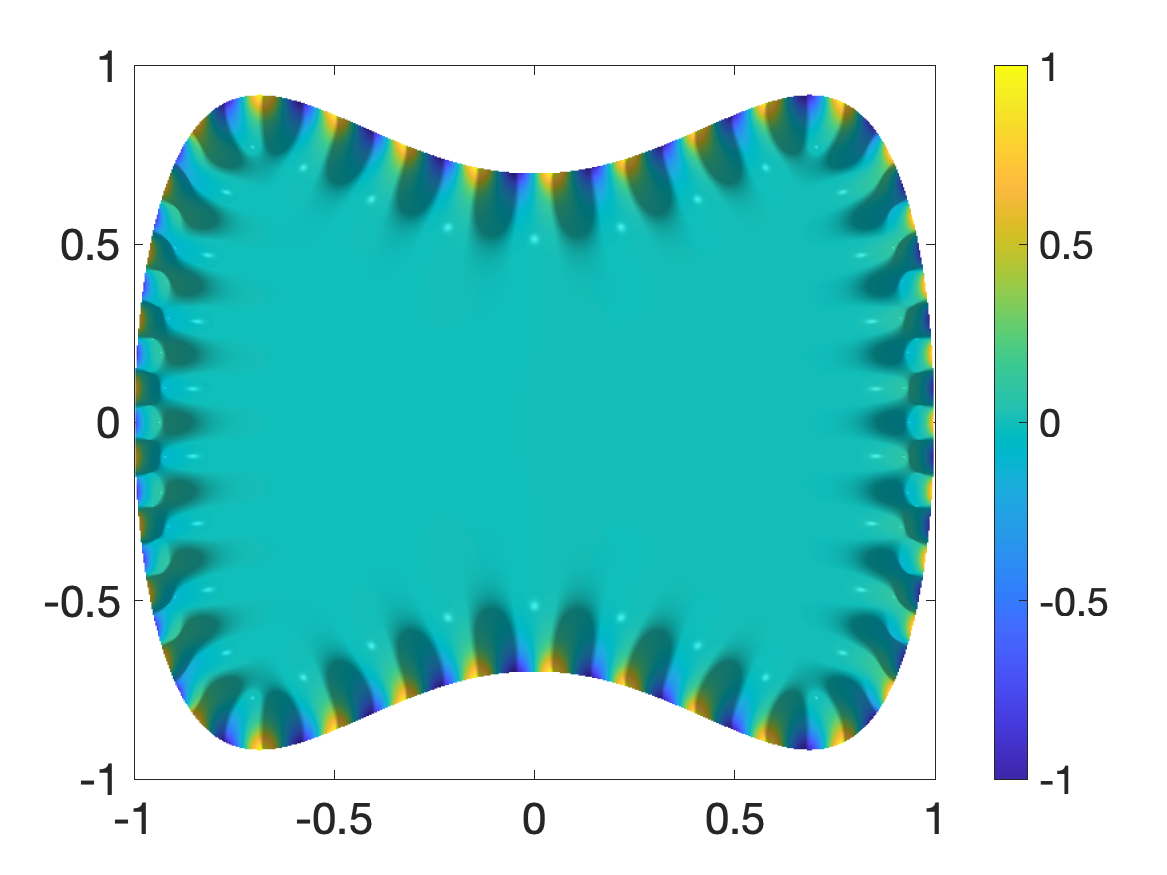}\\
		
		plot of $u_1$ &
		plot of $u_2$& &
		plot of $u_1$&
		plot of $u_2$\\
		$\lambda=1,\ \mu=0.5$& $\lambda=1,\ \mu=0.5$& & $\lambda=1,\ \mu=3$& $\lambda=1,\ \mu=3$
	\end{tabular}
	
	\caption{Plots of the first and second components for the eigenfunctions associated to the eigenvalues $\Lambda_i,\ i=1,2,7,20,100$ of $\Omega_1$ with $(\lambda,\mu)=(1,0.5)$ (left-hand side plots) and $(\lambda,\mu)=(1,3)$ (right-hand side plots).}
	\label{fig:eig_funct} 
\end{figure*}

Next, we illustrate the Moler-Payne type result applied to some eigenvalues and corresponding eigenfunctions of $\Omega_1$. In Figure~\ref{fig:moler_payne} we plot $\|f_\varepsilon\|_{\bo L^2(\partial \Omega)}=\|Ae(\bo u_\varepsilon)\bo n -\Lambda_\varepsilon \bo u_\varepsilon\|_{\bo L^2(\partial \Omega)}$, for $\Lambda_i,\ i=1,20,100$. In each case we took a $L^2$ normalized approximated eigenfunction and by~\eqref{bound_error} we can get an upper bound for the error of the approximation of the eigenvalue simply by measuring $\|\bo f_\varepsilon\|_{\bo L^2(\partial \Omega)}$.  An eigenvalue and eigenfunction computation takes just a few seconds even for the $100$-th eigenvalue. The plot of the errors suggests that the numerical computations are highly accurate, underlining the interest of using MFS when dealing with smooth domains. 

\begin{figure}[ht]
	\centering 
	\includegraphics[width=0.7\textwidth]{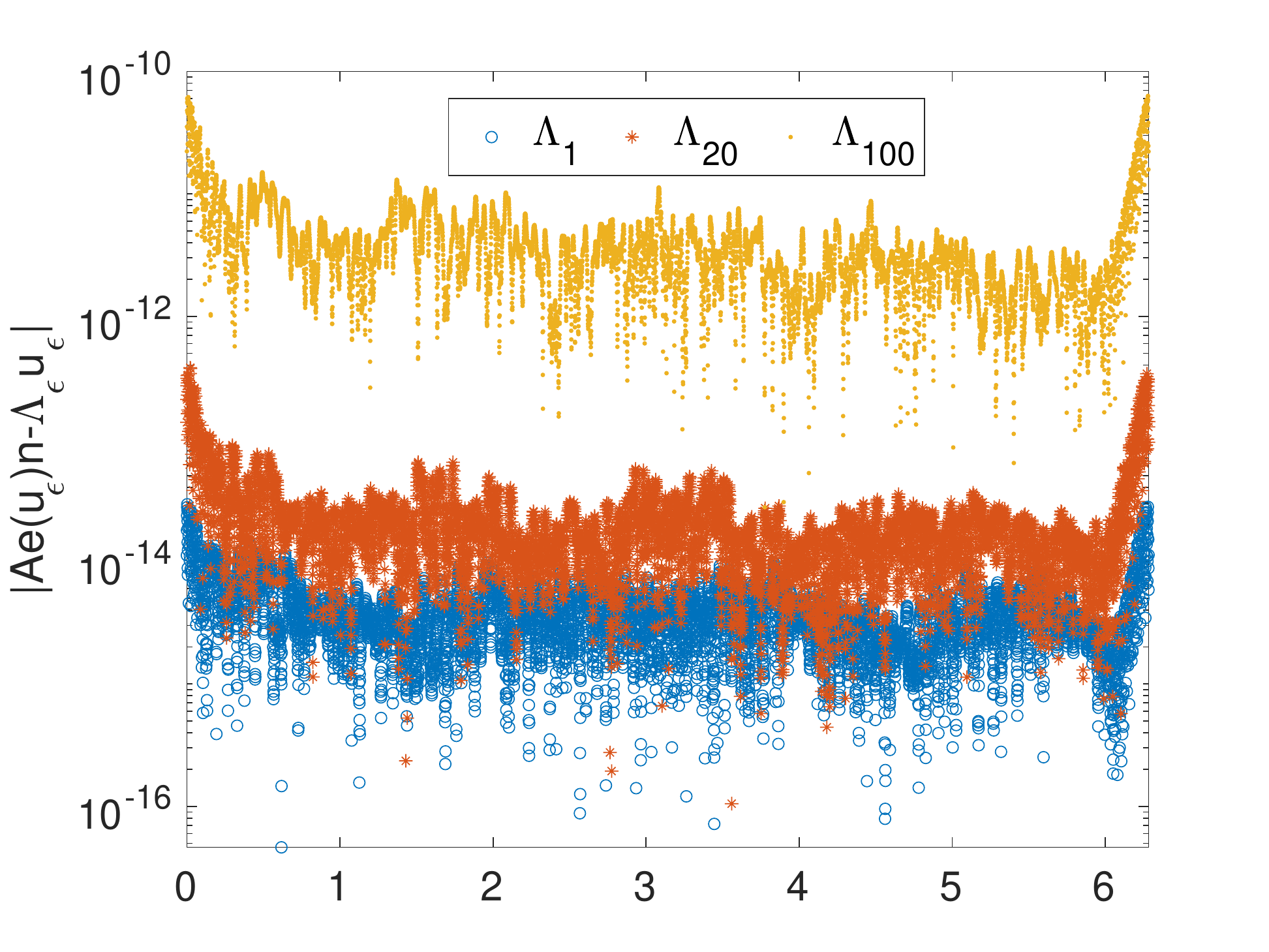}
	\caption{Plots of $|f_\varepsilon|=|Ae(\bo u_\varepsilon)\bo n -\Lambda_\varepsilon \bo u_\varepsilon|$, for $\Lambda_i,\ i=1,20,100$. In each case we took a $\bo L^2(\partial \Omega)$ normalized approximate eigenfunction.} \label{fig:moler_payne}
\end{figure}

Next, we show some numerical results for the solution of shape optimization problems \eqref{shoptprob} and \eqref{shoptprobconv}. Figure~\ref{fig:opt18} shows the plots of the optimal eigenvalues $\Lambda_n^\ast$ using only a volume constraint (marked with $\txtb{\bullet}$) and $\Lambda_n^\#$ using volume and convexity constraint (marked with $\txtr{\circ}$), for $n=1,2,...,8$, together with the representation of the optimal domain. In each case we plot also the eigenvalue obtained for the disk with unit area. Figure~\ref{fig:opt910} shows similar results for $\Lambda_i,\ i=9,10$. The optimization process for $\Lambda_1$ takes around $5$ minutes on a portable laptop. For higher eigenvalues, due to higher multiplicity, computations are more time consuming.

We summarize some observations below:
\begin{itemize}[topsep=0pt]
	\item The disk maximizes $\Lambda_1(\Omega)$ at fixed volume. This result was proved theoretically when $\lambda>\mu$ in Theorem \ref{thm:optimality-disk}. 
	\item The maximizers for $\Lambda_2(\Omega)$ are convex. For $\mu = \lambda$ the disk seems to be optimal. When $\mu<\lambda$ the minimizer is close to the disk, however, the optimal values are slightly larger than those for the disk and the multiplicity clusters do not coincide with those known the disk.
	\item The maximizes for $\Lambda_3(\Omega)$ are convex.
	\item The maximizers for $\Lambda_k(\Omega)$ are convex when $\lambda \leq \mu$ and $k \in \{4,5,6\}$.
\end{itemize}

It is a common observation when studying optimizers of spectral functionals, that the optimal eigenvalue tends to be multiple. For the scalar Steklov problem this fact was observed in \cite{osting-steklov} and \cite{bogosel-bucur-giacomini}. In our case, the numerical results also suggest that often the maximal eigenvalue is multiple, however, the behavior is more complex, as it depends on the Lam\'e parameters $\lambda$ and $\mu$. Consider the following examples:
\begin{itemize}[topsep=0pt]
	\item maximization of $\Lambda_1$: the numerical maximizer is always the disk. Therefore the optimal eigenvalue is double when $\lambda\neq \mu$ and quadruple when $\lambda=\mu$. 
	\item maximization lf $\Lambda_2$: the optimal eigenvalue is double except when $\lambda=\mu$ when the eigenvalue is quadruple.
	\item maximization lf $\Lambda_3$: the optimal eigenvalue is double except when $\mu<2\lambda$ and triple when $\mu \geq 2\lambda$.
\end{itemize}

\begin{figure}[ht]
	\centering 
	\includegraphics[width=0.49\textwidth]{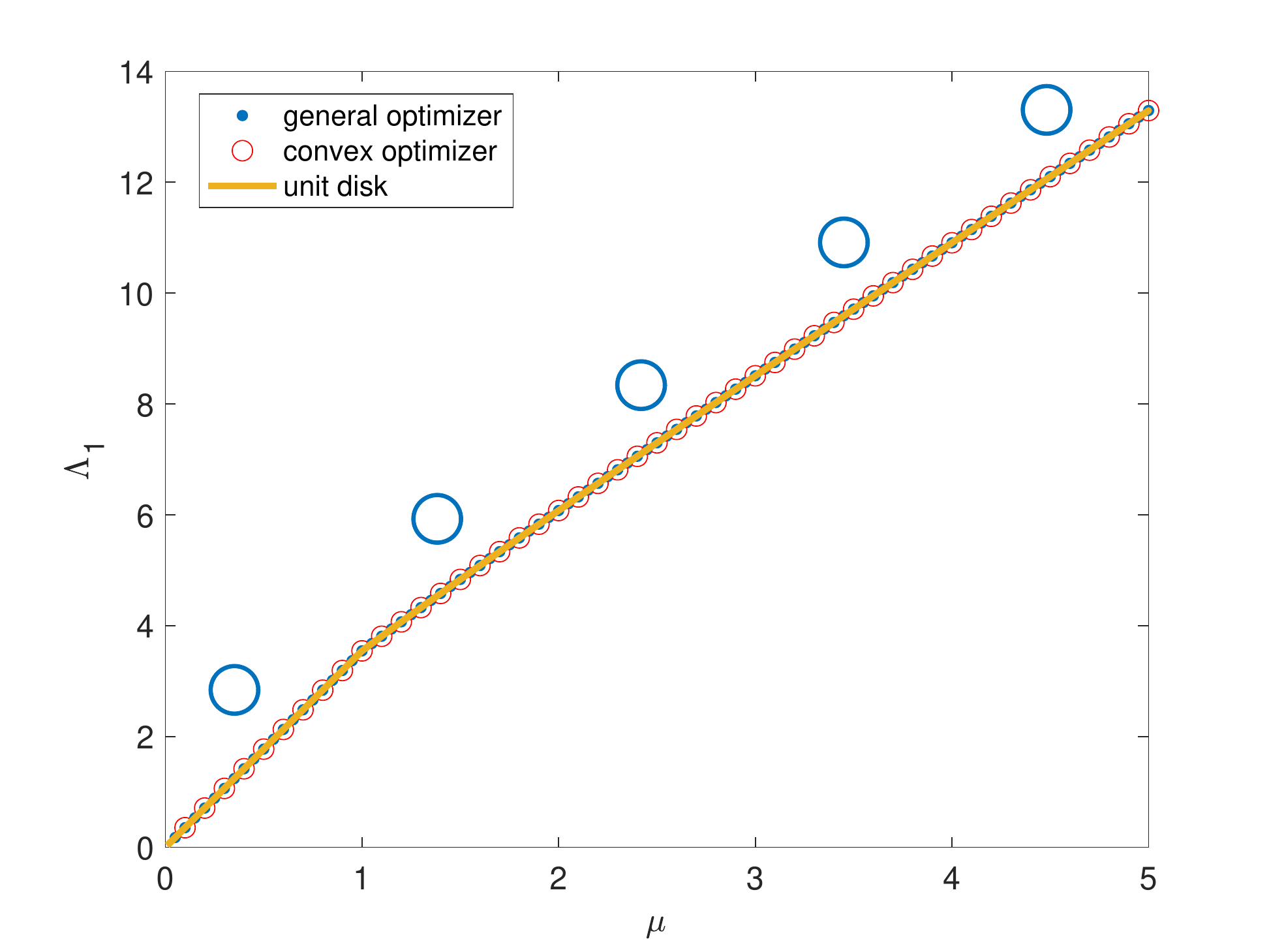}
	\includegraphics[width=0.49\textwidth]{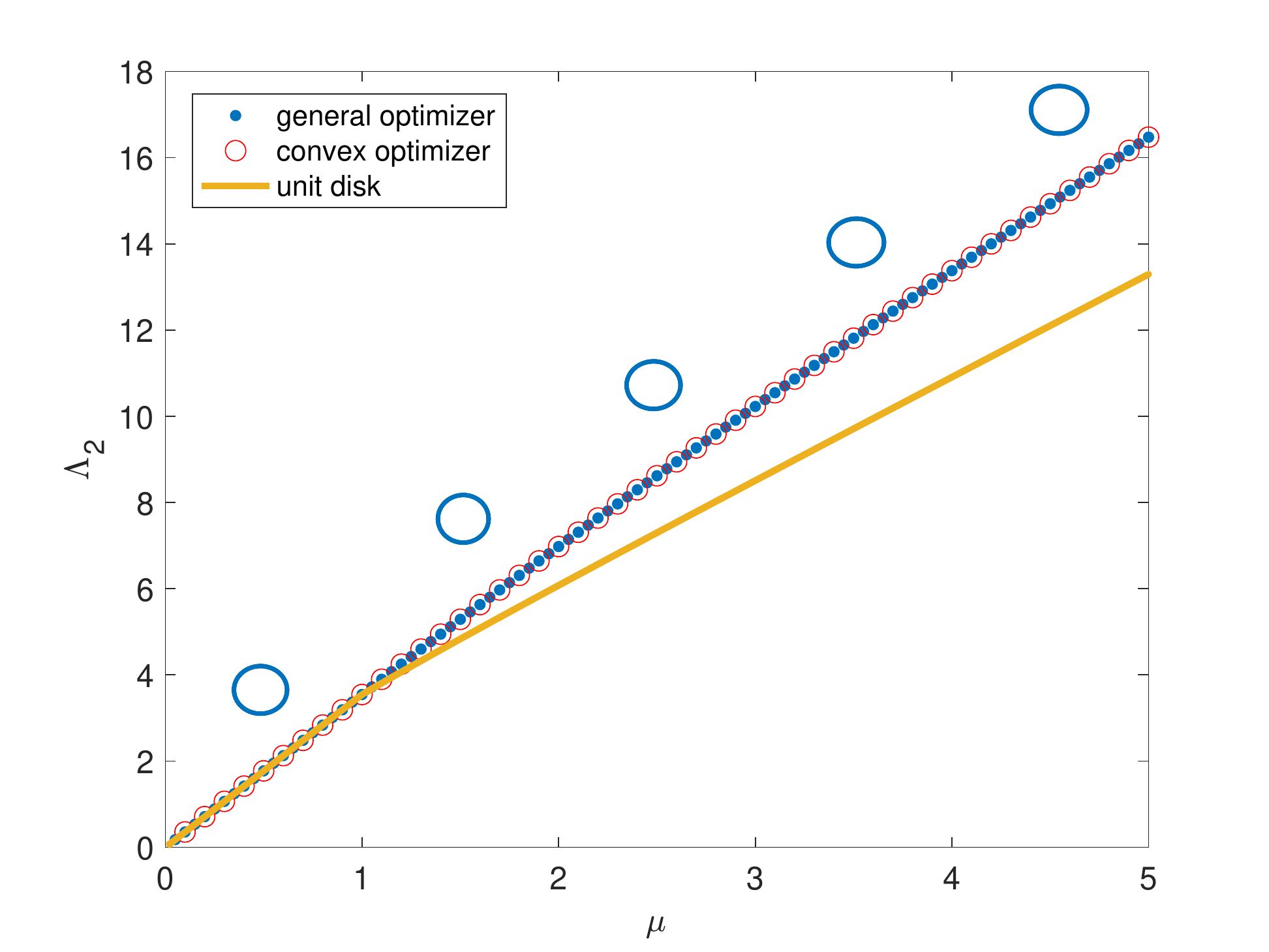}
	\includegraphics[width=0.49\textwidth]{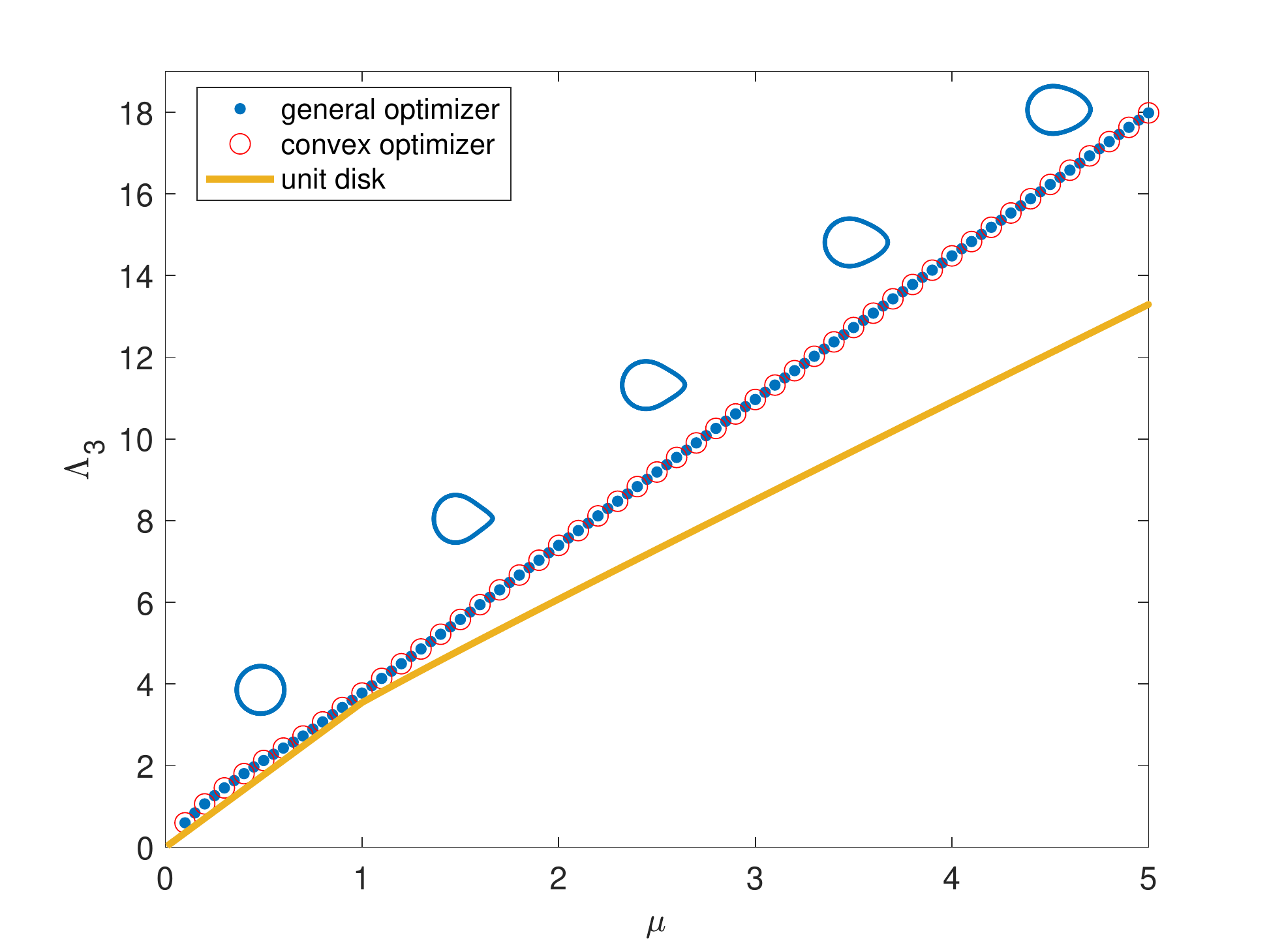}
	\includegraphics[width=0.49\textwidth]{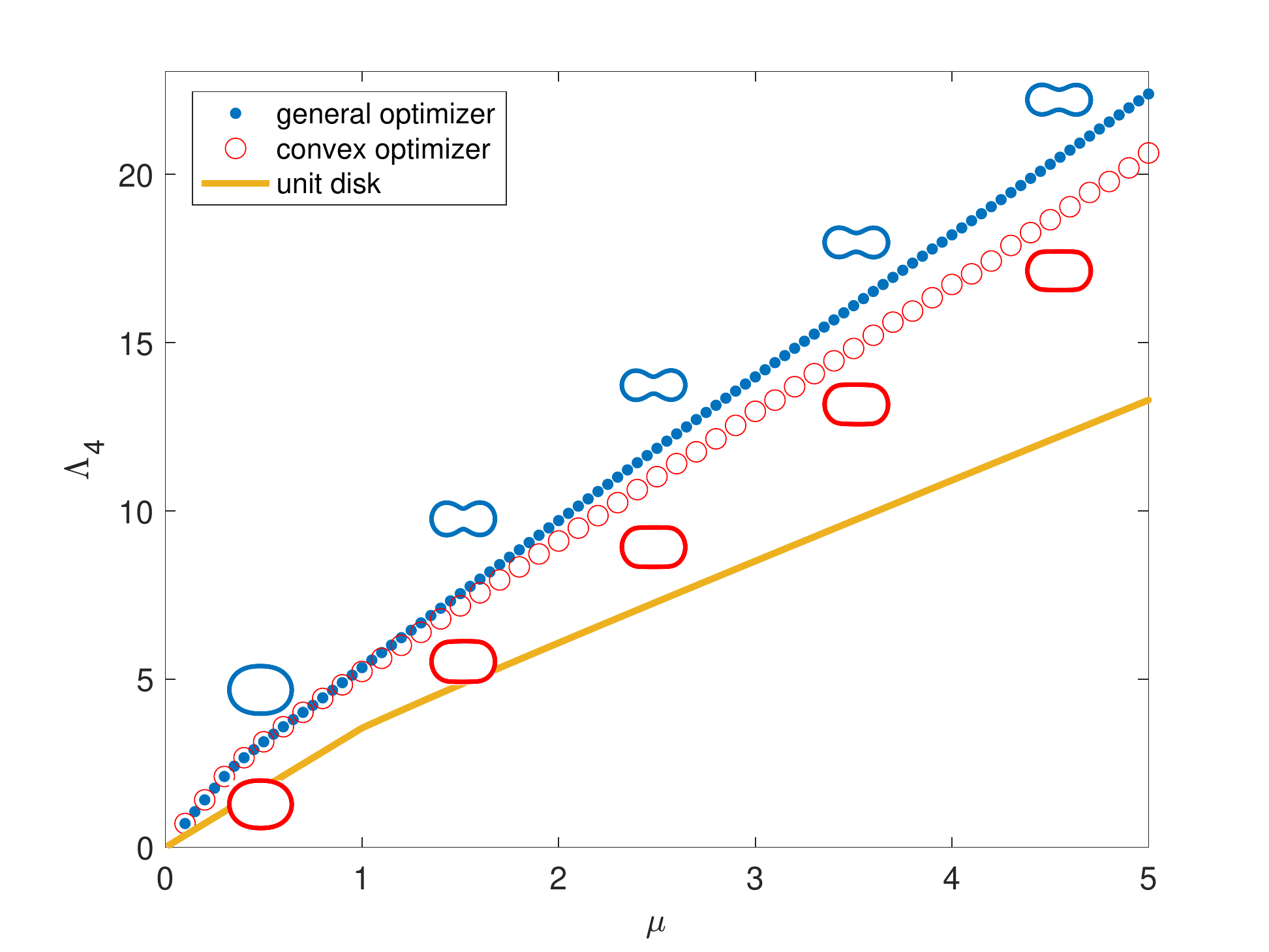}
	\includegraphics[width=0.49\textwidth]{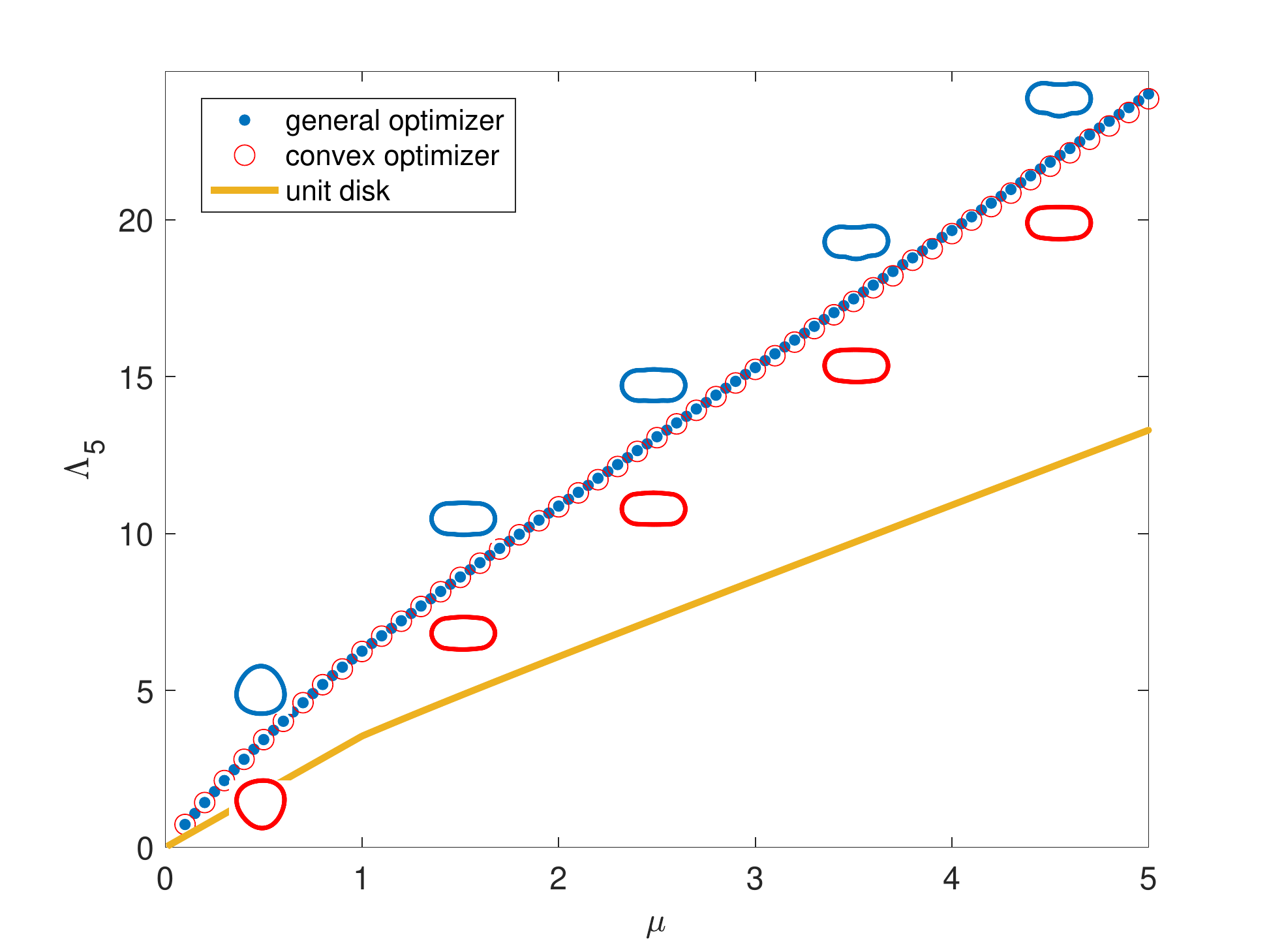}
	\includegraphics[width=0.49\textwidth]{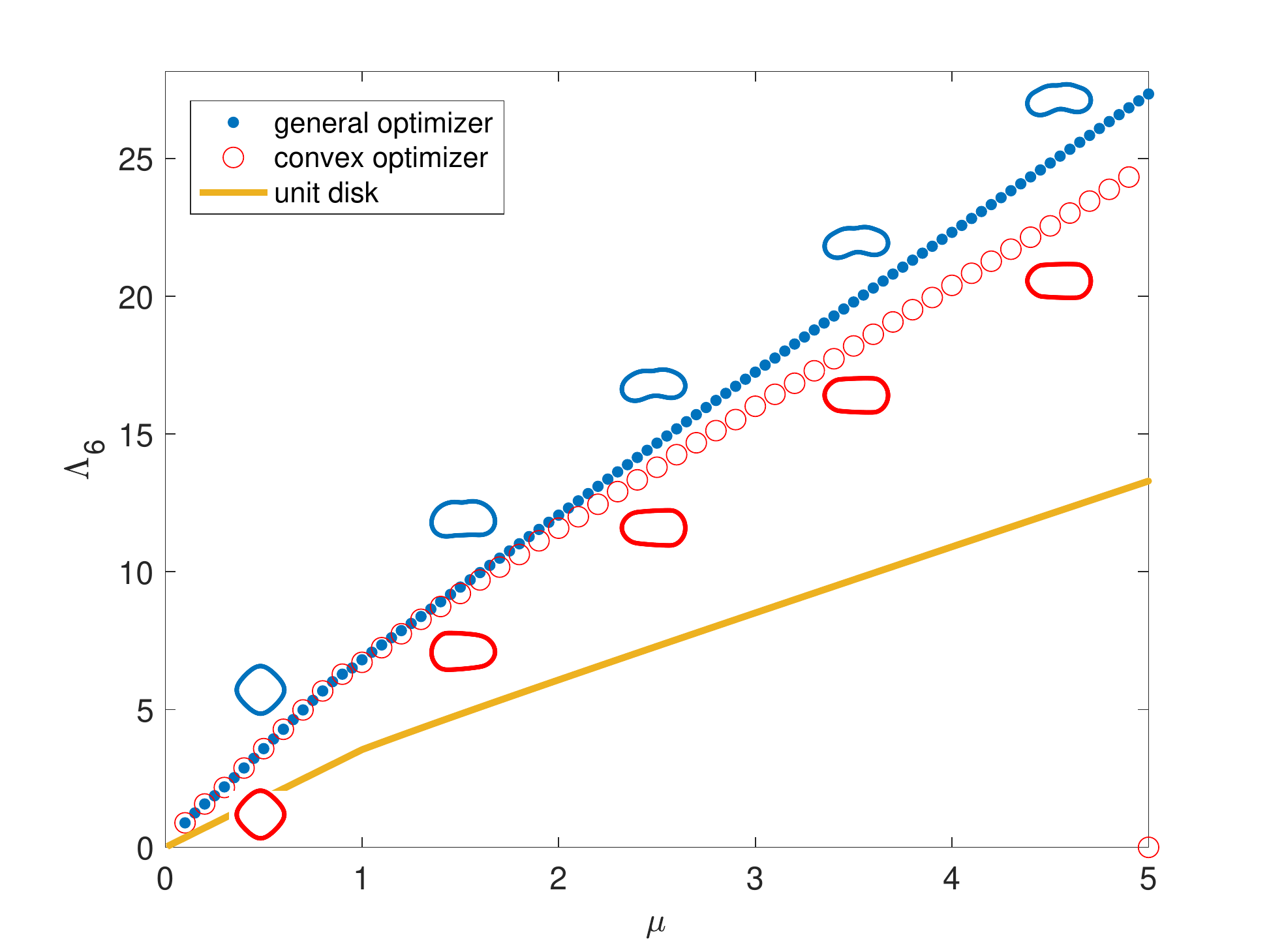}
	\includegraphics[width=0.49\textwidth]{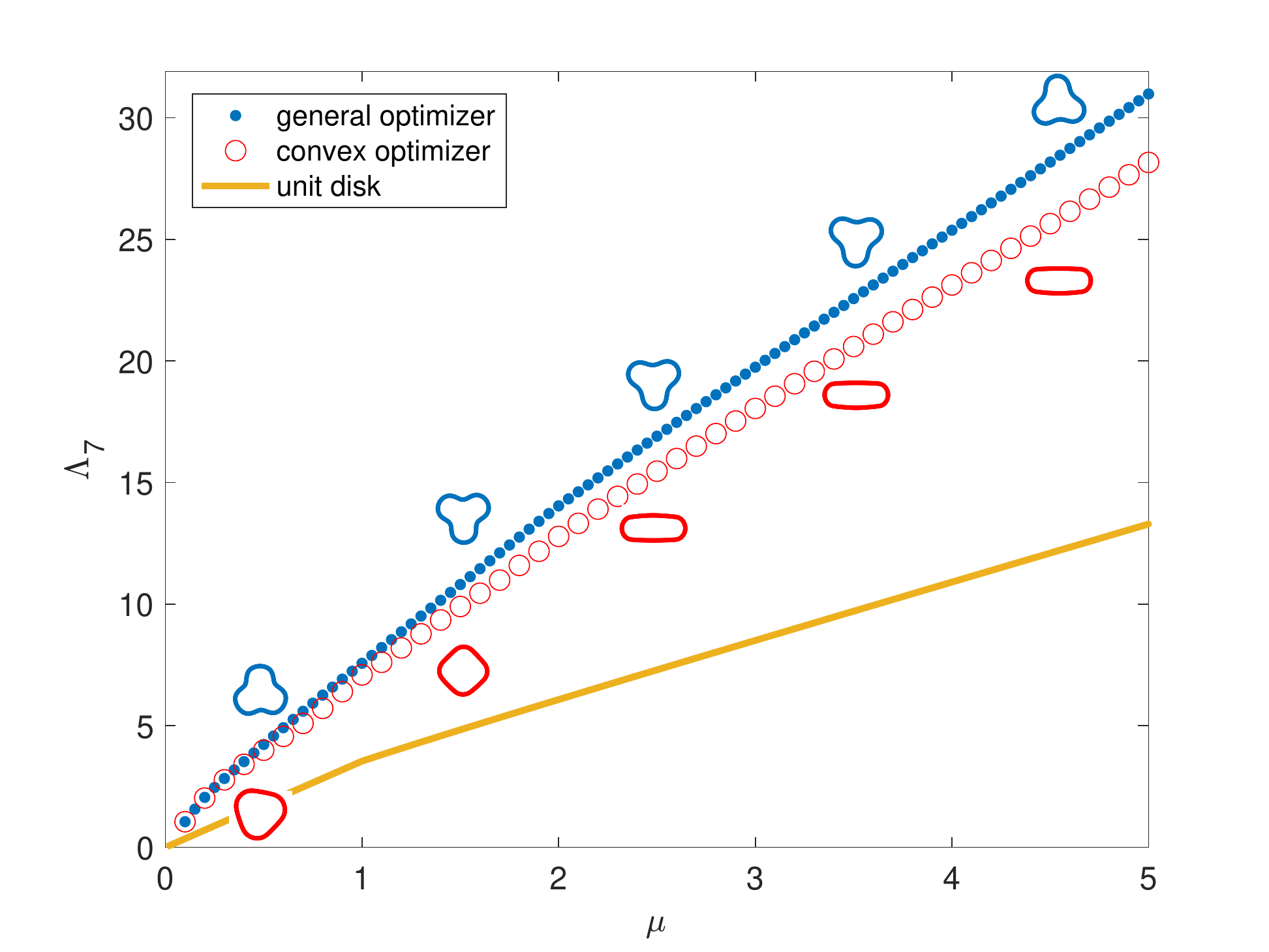}
	\includegraphics[width=0.49\textwidth]{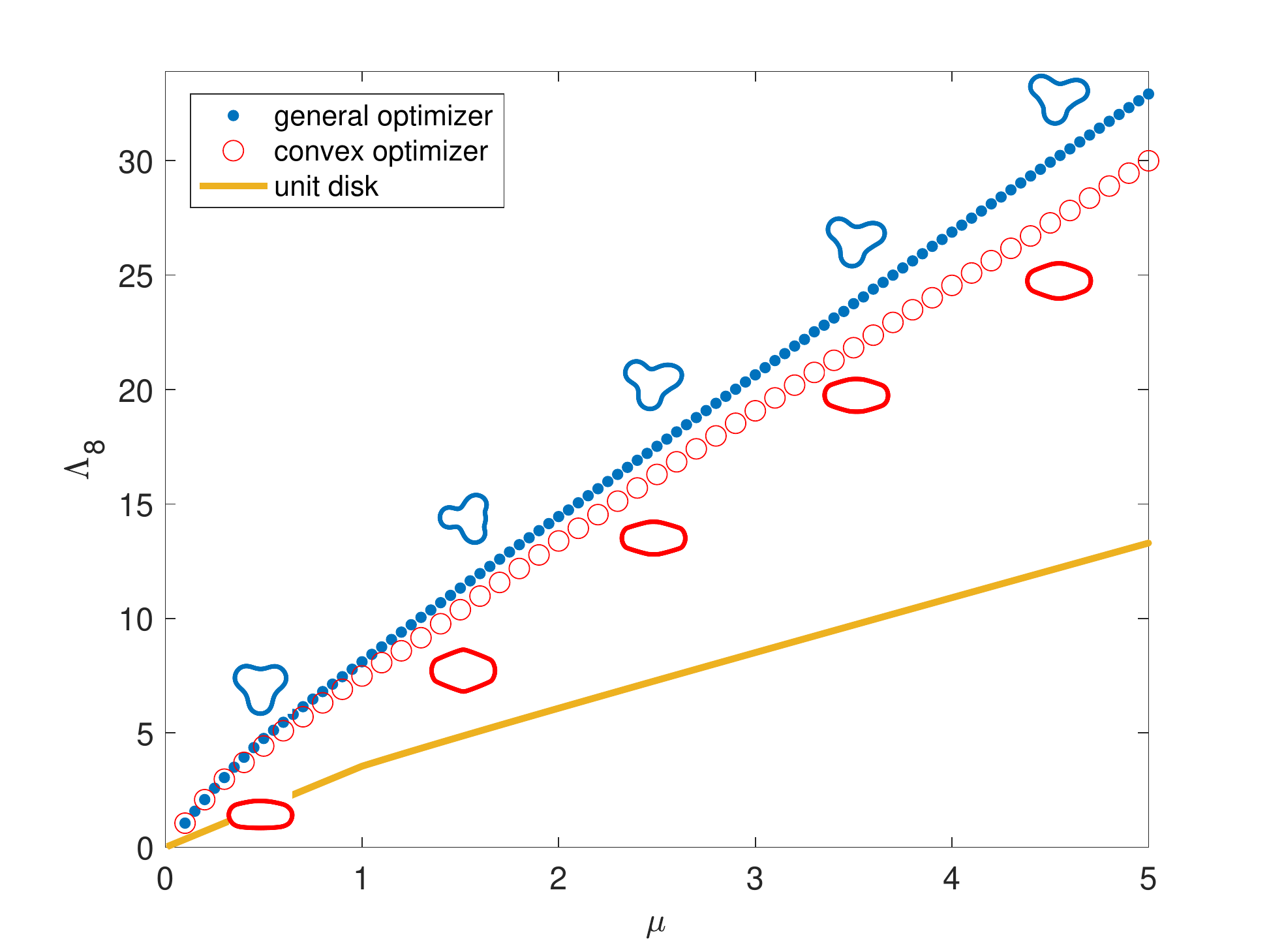}
	\caption{Plots of the optimal eigenvalues $\Lambda_n^\ast$ (marked with $\txtb{\bullet}$) and $\Lambda_n^\star$ (marked with $\txtr{\circ}$), for $n=1,2,...,8$, together with the representation of the optimal domain. In each case we plot also the eigenvalue obtained for the disk with unit area.} \label{fig:opt18}
\end{figure}

\begin{figure}[ht]
	\centering 
	\includegraphics[width=0.49\textwidth]{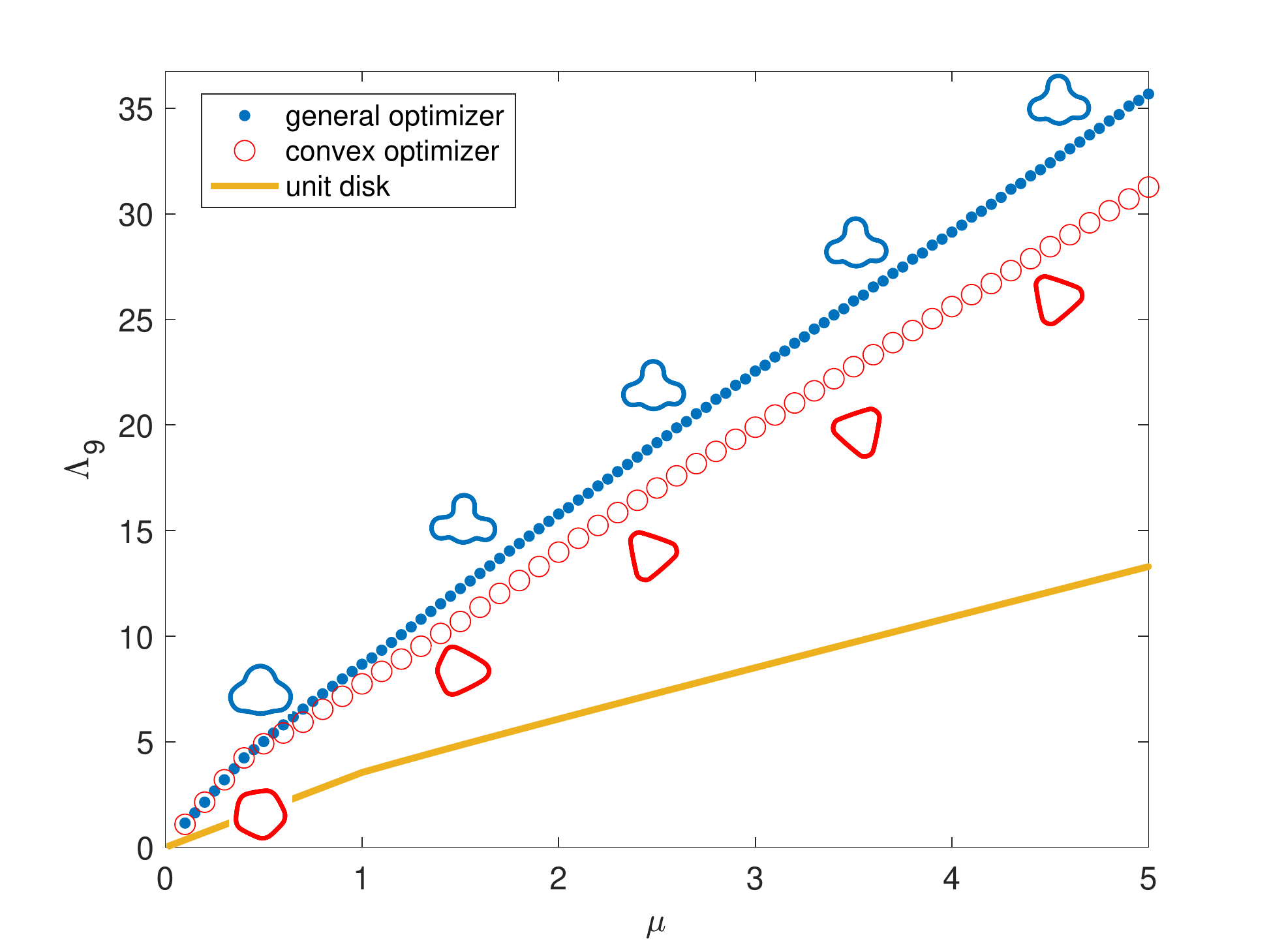}
	\includegraphics[width=0.49\textwidth]{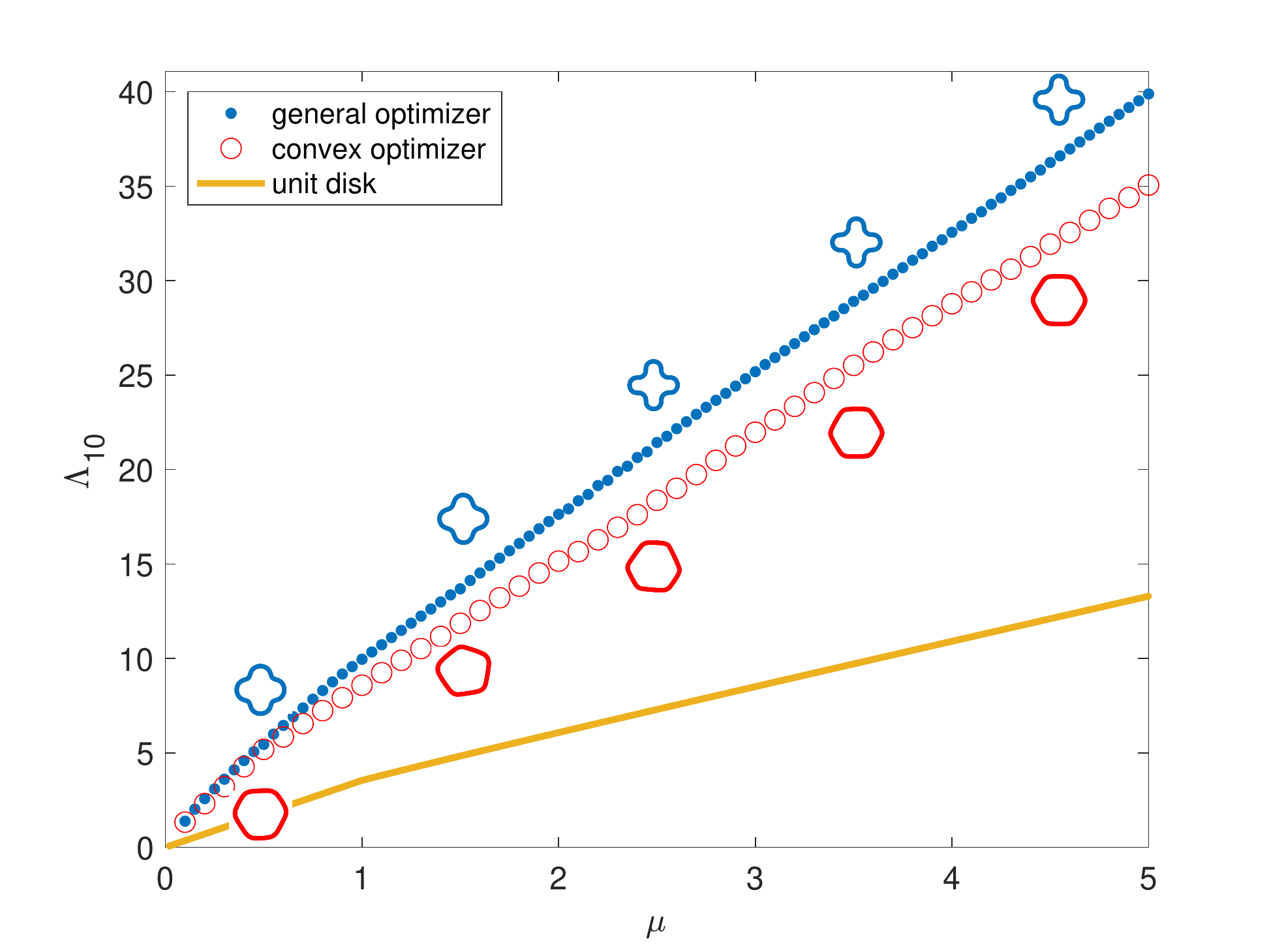}
	\caption{Plots of the optimal eigenvalues $\Lambda_n^\ast$ (marked with $\txtb{\bullet}$) and $\Lambda_n^\star$ (marked with $\txtr{\circ}$), for $n=9,10$, together with the representation of the optimal domain. In each case we plot also the eigenvalue obtained for the disk with unit area.} \label{fig:opt910}
\end{figure}

\section{Conclusions}

In this paper we studied the behavior of the Steklov-Lam\'e eigenvalues on variable domains. The eigenstructure of the disk was determined in Theorem \ref{thm:eigdisk}. This allowed us to partially extend the results of Weinstock \cite{weinstock} and Brock \cite{brock} to the Steklov-Lam\'e eigenvalues in Theorem \ref{thm:optimality-disk}: the disk maximizes the first non-zero eigenvalue when $\lambda>\mu$ under area and perimeter constraints. Numerical observations suggest that this also holds when $\lambda \leq \mu$. 

Upper bounds related to the scalar Steklov eigenvalues generalize to the Steklov-Lam\'e case, as shown in Proposition \ref{prop:upper-bounds}. Theorem \ref{thm:weight-continuity} shows that the eigenvalues are upper-semicontinuous among $\varepsilon$-cone domains converging in the complementary Hausdorff distance. As a direct consequence, there exist maximizers of the Steklov-Lam\'e eigenvalues among convex sets with unit volume. 

A numerical method based on fundamental solutions was proposed to approximate these eigenvalues numerically. This allowed us to study numerically domains maximizing the Steklov-Lam\'e eigenvalues. This work shows that many of the results related to the scalar Steklov eigenproblem \eqref{eq:steklov-eigs} extend to the Steklov-Lam\'e eigenvalues. 

\bmhead{Acknowledgments} The second author has been supported by the ANR SHAPO (ANR-18-CE40-0013) grant.

\section*{Declarations}

The authors have no competing interests to declare that are relevant to the content of this article. The data that support the findings of this paper are available from the corresponding author upon request.

\bibliography{./StekLam.bib}
\bibliographystyle{./sn-mathphys}

\end{document}